\definecolor{svlinks}{rgb}{.0,0.3,0.6}
\pretocmd{\eqref}{Eq.~}{}{}
\declaretheorem[numberwithin=section]{theorem}
\declaretheorem[numbered=no,name=Theorem]{theorem*}
\declaretheorem[sibling=theorem]{proposition}
\declaretheorem[numbered=no,name=Proposition]{proposition*}
\declaretheorem[sibling=theorem]{corollary}
\declaretheorem[numbered=no,name=Corollary]{corollary*}
\declaretheorem[numbered=no,name=Example]{example*}
\declaretheorem[numbered=no,name=Fact]{fact*}
\declaretheorem[numbered=no,name=Observation]{observation*}
\declaretheorem[sibling=theorem]{lemma}
\declaretheorem[numbered=no,name=Lemma]{lemma*}
\declaretheorem[sibling=theorem]{question}
\declaretheorem[numbered=no,name=Question]{question*}
\declaretheorem[numbered=no,name=Claim]{claim*}
\theoremstyle{definition}
\declaretheorem[sibling=theorem,name=Definition,qed=$\diamond$]{definition}
\declaretheorem[numbered=no,name=Definition,qed=$\diamond$]{definition*}
\declaretheorem[numbered=no,name=Note]{note*}
\declaretheorem[numbered=no,name=Notation]{notation*}
\newcommand{\Ord}{\mathrm{Ord}}
\DeclareMathOperator{\Add}{Add}
\DeclareMathOperator{\Coll}{Coll}
\newcommand{\graft}{\wr}
\DeclareMathOperator{\dom}{dom}
\newcommand{\pow}{\mathcal{P}}
\renewcommand{\P}{\mathbb{P}}
\newcommand{\rest}[1]{{\upharpoonright}_{#1}} % kerning on this is very tight; seems cramped – LDK: without the {} around it, the kerning is way too loose; it seems quite fine to me as it is
\newcommand{\forces}{\Vdash}
\newcommand{\st}{\mid}
\newcommand{\set}[2]{\{#1 \st #2\}}
\newcommand{\seq}[2]{\langle #1 \st #2\rangle}
\newcommand{\ZFC}{\textsc{zfc}\xspace}
\newcommand{\GBC}{\textsc{gbc}\xspace}
\newcommand{\GBCm}{\textsc{gbc}\textsuperscript{$-$}\xspace}
\newcommand{\ETR}{\textsc{etr}\xspace}
\newcommand{\ECC}{\textsc{ecc}\xspace}
\newcommand{\DDG}{\textsc{ddg}\xspace}
\newcommand{\Gdot}{\dot{G}}
\renewcommand{\models}{\vDash}
\setlist{nosep}
\begin{document}
 
\title{Set-Theoretic Blockchains}

\author{Miha E.\ Habi\v{c}}
\address[M.~E.~Habi\v{c}]{
Faculty of Information Technology\\
Czech Technical University in Prague\\
Th\'akurova 9\\
160 00 Praha 6\\
Czech Republic
\&
Department of Logic\\
Faculty of Arts\\
Charles University\\
n\'am.\ Jana Palacha 2\\
116 38 Praha 1\\
Czech Republic
}
\email{habicm@ff.cuni.cz}
\urladdr{https://mhabic.github.io}

\author{Joel David Hamkins}
\address[J.~D.~Hamkins]
{Professor of Logic\\
Faculty of Philosophy\\
University of Oxford\\
Radcliffe Observatory Quarter 555\\
Woodstock Road\\
Oxford\\
OX2 6GG\\
United Kingdom
\&
Sir Peter Strawson Fellow in Philosophy\\
University College\\
Oxford\\
OX1 4BH\\
United Kingdom
}
\email{joeldavid.hamkins@philosophy.ox.ac.uk}
\urladdr{http://jdh.hamkins.org}

\author{Lukas Daniel Klausner}
\address[L.~D.~Klausner]{
Institute of Discrete Mathematics and Geometry\\
TU Wien\\
Wiedner Hauptstra\ss{}e 8--10/104\\
1040 Wien\\
Austria
}
\email{mail@l17r.eu}
\urladdr{https://l17r.eu}

\author{Jonathan Verner}
\address[J.~Verner]{
Department of Logic\\
Faculty of Arts\\
Charles University\\
n\'am. Jana Palacha 2\\
116 38 Praha 1\\
Czech Republic
}
\email{jonathan.verner@ff.cuni.cz}

\DeclareRobustCommand{\okina}{%
	\raisebox{\dimexpr\fontcharht\font`A-\height}{%
		\scalebox{0.8}{`}%
	}%
}
\author{Kameryn J.\ Williams}
\address[K.~J.~Williams]{
University of Hawai\okina{}i at M\=anoa \\
Department of Mathematics \\
2565 McCarthy Mall, Keller 401A \\
Honolulu, HI 96822\\
USA
}
\email{kamerynw@hawaii.edu}
\urladdr{http://kamerynjw.net}

\thanks{We thank the anonymous referee for their helpful comments.\\
The first author was supported by the ESIF, EU Operational Programme Research, Development and Education, the International Mobility of Researchers in CTU project no.~(CZ.02.2.69/0.0/0.0/16\_027/0008465) at the Czech Technical University in Prague, the joint FWF--GA\v{C}R grant no.~17-33849L: Filters, Ultrafilters and Connections with Forcing and by the Progres grant Q14.~Krize racionality a modern\'\i\ my\v{s}len\'\i. \\
The second author is grateful for the financial support provided by the Czech Academy of Sciences and the Winter School in Abstract Analysis 2018 in connection with his visit to Prague and Hejnice in January and February 2018. This work is truly a collaboration between Prague, New York and Vienna, with various combinations of the authors interacting significantly in each of these cities. \\
The third author was supported by the Austrian Science Fund (FWF) project P29575 ``Forcing Methods: Creatures, Products and Iterations''. \\
The fourth author was supported by the joint FWF--GA\v{C}R grant no.~17-33849L: Filters, Ultrafilters and Connections with Forcing, by the Progres grant Q14.~Krize racionality a modern\'\i\ my\v{s}len\'\i\ and by Charles University Research Centre program No.~UNCE/SCI/022.
}

\subjclass[2010]{Primary 03E40; Secondary 03E35}

\keywords{generic multiverse, amalgamability, blockchains, mutual genericity, surgery, exact pair}

\begin{abstract}
	Given a countable model of set theory, we study the structure of its generic multiverse,
	the collection of its forcing extensions and ground models, ordered by inclusion.
	Mostowski showed that any finite poset embeds into the generic multiverse 
	while preserving the nonexistence of upper bounds.
	We obtain several improvements of his result, using what we call the blockchain construction
	to build generic objects with varying degrees of mutual genericity. 
	The method accommodates certain infinite posets, and we can realize these embeddings via a wide 
	variety of forcing notions, while providing	control over lower bounds as well.
	We also give a generalization to class forcing in the context of second-order set theory,
	and exhibit some further structure in the generic multiverse, such as the existence of
	exact pairs.
\end{abstract}

\maketitle

\section{Introduction}
\label{sec:intro}
Forcing is the predominant method of building new models of set theory. From a given model
we may build up myriad forcing extensions, adding new objects to our
models as we proceed; or we may in contrast dig down on a geological tack to various grounds and the mantle, as done recently by Fuchs, the second author, and 
Reitz~\cite{FuchsHamkinsReitz2015:SetTheoreticGeology}, stripping
away any superfluous forcing that might earlier have been performed. By iterating these two operations in turn, we thereby construct a robust collection
of models, all ultimately derived from the initial model and exhibiting a certain family resemblance. Namely, we arrive at the generic multiverse of the original model, first defined by 
Woodin~\cite{Woodin2011:CHGenericMultiverseOmegaConjecture}.

\begin{definition}
	The \emph{generic multiverse} of a countable transitive\footnote{Transitivity is not actually required for
	our arguments, but it simplifies the presentation.} model of set theory $M\models\ZFC$ is the smallest collection of models containing $M$ and 
	closed under forcing extensions and grounds.
\end{definition}

We shall simply refer to \emph{the} generic multiverse, although of course the particular multiverse that arises depends on the original model $M$, or indeed on any of the models in it, for the generic multiverse of a model is an equivalence class in the corresponding partition of the space of all countable transitive models of $\ZFC$. But in this paper, let it be understood that we start from an arbitrary countable transitive model of set theory $M$, fixed for the remainder of the paper, and consider the resulting generic multiverse of this model.

Given a definable class $\Gamma$ of posets, such as \textsc{ccc} or proper posets and so on, one could just as well consider the $\Gamma$-generic multiverse, obtained by forcing extensions and
ground models using only forcing notions in $\Gamma$. The definition of $\Gamma$
should be reinterpreted in each model anew, with the restriction to \textsc{ccc} posets, say, being read \emph{de dicto} rather than \emph{de re}.

Typically, the generic multiverse is of interest because of the models themselves. After all, it arose
because of the study of forcing which gives us new and exciting models of set theory to play with.
But the generic multiverse is also an interesting object \emph{per se} and as a whole. Just as computability
theorists are interested not only in the computational content of particular Turing degrees, 
but also in the global structure of these degrees, so too can set theorists venture beyond the 
particular models in the generic multiverse and study the structure as a whole. The aspect of this structure in which we are most interested in this paper is simply the inclusion relation
between models.

In work growing out of the modal logic of forcing \cite{HamkinsLoewe2008:TheModalLogicOfForcing, HamkinsLoewe2013:MovingUpAndDownInTheGenericMultiverse} and set-theoretic geology \cite{FuchsHamkinsReitz2015:SetTheoreticGeology}, the second author had inquired whether the inclusion relation in the generic multiverse coincides with the forcing-extension relation (see \cite[Question~1]{Hamkins2016:UpwardClosureAndAmalgamationInGenericMultiverse}). Since one might arrive from one model to a smaller model by means of a circuitous zig-zag path of successive forcing extensions and grounds, there seemed at first to be no direct reason to expect the smaller model necessarily to be a ground of the original model. Nevertheless, the second author~\cite[Theorem 3]{Hamkins2016:UpwardClosureAndAmalgamationInGenericMultiverse} had proved that if the downward-directed grounds hypothesis \DDG was true, then indeed the inclusion relation was the forcing-extension relation in the generic multiverse. In a major result, Usuba~\cite{Usuba2017:DownwardDirectedGrounds} subsequently proved that indeed the generic multiverse is downward directed, establishing the following corollary.\footnote{It has been pointed out to us that this lemma may be proved without appealing to \DDG; instead, one can show directly that any pair of models in the generic multiverse enjoys a certain uniform covering property, and then apply a theorem of Bukovsk\'y~\cite{Bukovsky1973:CharacterizationOfGenericExtensions} to see that, if one model is included in another, then the second one is a forcing extensions of the first. We present the given proof since it seems clearer, and \DDG is, after all, a theorem of \ZFC.}

\begin{lemma}
\label{lemma:SubsetEqualsForcingExtension}
	Let $N_0$ and $N_1$ be models in the generic multiverse of $M$. If $N_0\subseteq N_1$ then
	$N_1$ is a forcing extension of $N_0$.
\end{lemma}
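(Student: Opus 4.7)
The plan is to prove the lemma in two stages: first establishing that any two models in the generic multiverse share a common ground inside the multiverse, and then invoking the intermediate model theorem of forcing.

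\textbf{Stage 1: common ground.} I would first prove the auxiliary claim that any two models $N, N'$ in the generic multiverse of $M$ admit a \emph{common ground} $W$ in the multiverse, i.e.\ that there exist a model $W$ and generics $G$, $G'$ with $N = W[G]$ and $N' = W[G']$. This is shown by induction on the length of a zig-zag path of successive forcing extensions and grounds witnessing that $N$ and $N'$ both lie in the generic multiverse of $M$. The inductive step amounts to extending a common ground $W$ of $N_0$ and an intermediate model $N$ to a common ground of $N_0$ and a new model $N_1$, where $N_1$ is either a forcing extension or a ground of $N$. If $N_1 = N[H]$ is a forcing extension of $N$, then the same $W$ works, because $N_1 = W[G_N][H]$ is still a forcing extension of $W$. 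If instead $N_1$ is a ground of $N$, then both $W$ and $N_1$ are grounds of $N$, and applying \DDG (Usuba's theorem) inside $N$ yields a common further ground $W'$ of $W$ and $N_1$; then $W'$ is a ground of $N_0$ (through $W$) and of $N_1$ directly, completing the step.

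\textbf{Stage 2: intermediate model theorem.} Applying the claim to $N_0$ and $N_1$, we obtain $W$ with $N_0 = W[G_0]$ and $N_1 = W[G_1]$ for some $W$-generic $G_0$ and $G_1$. The assumption $N_0 \subseteq N_1$ then places $N_0$ as an intermediate \ZFC-model in the chain $W \subseteq N_0 \subseteq W[G_1] = N_1$. By the classical intermediate model theorem (Grigorieff), any such intermediate model is itself a forcing extension of $W$ by a complete subalgebra of the forcing producing $G_1$, and moreover $N_1$ is a forcing extension of $N_0$. This is exactly the conclusion of the lemma.

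\textbf{Main obstacle.} The delicate step is the ground case of the induction in Stage 1: one must know that \DDG is available in the relevant intermediate model $N$, not merely in $M$. This is not a problem because Usuba's theorem is a theorem of \ZFC and thus applies internally within every model of the generic multiverse; but this is precisely where the full strength of \cite{Usuba2017:DownwardDirectedGrounds} is consumed. The remaining ingredients are standard facts about iterated forcing extensions and the intermediate model theorem.
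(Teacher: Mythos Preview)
Your proposal is correct and follows essentially the same route as the paper: use Usuba's \DDG to obtain a common ground $W$ of $N_0$ and $N_1$, then apply the intermediate model theorem to the chain $W\subseteq N_0\subseteq N_1$. The only cosmetic difference is that the paper packages your Stage~1 induction as the single statement that the generic multiverse of $M$ coincides with the collection of forcing extensions of grounds of $M$ (citing \cite[Theorem~3]{Hamkins2016:UpwardClosureAndAmalgamationInGenericMultiverse}), from which the common ground is read off immediately via one application of \DDG inside $M$.
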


\begin{proof}
	The proof amounts to~\cite[Theorem~3]{Hamkins2016:UpwardClosureAndAmalgamationInGenericMultiverse} plus Usuba's result on the \DDG~\cite{Usuba2017:DownwardDirectedGrounds}. Namely, from the \DDG it follows that the collection of forcing extensions of grounds of $M$ is closed under both forcing extensions and grounds, and so this collection of models is precisely the generic multiverse of $M$. If $N_0\subseteq N_1$ are among these, then there is some ground $N$ common to $M$, $N_0$ and $N_1$, % of $N_1$ such that $N\subseteq N_0\subseteq N_1$. 
	which means that $N\subseteq N_0\subseteq N_1$.
	The model $N_0$ is therefore intermediate between a ground model and one of its forcing extensions. It follows by the intermediate model theorem
	(see~\cite[Lemma~15.43 and the discussion preceding Theorem~16.4]{Jech2002:SetTheory}) that
	$N_1$ is also a forcing extension of $N_0$.
\end{proof}

The order structure of the generic multiverse has been studied before. For example, 
Reitz~\cite{Reitz2007:GroundAxiom} studied the ground axiom, a first-order set-theoretic statement
describing the minimal elements in their generic multiverses, and this work subsequently led to the
development of set-theoretic geology (see~\cite{FuchsHamkinsReitz2015:SetTheoreticGeology}),
the study of ground models and ``going down'' in the generic multiverse. The second author and L\"owe had studied the modal logic both of forcing extensions and of grounds \cite{HamkinsLoewe2008:TheModalLogicOfForcing, HamkinsLoewe2013:MovingUpAndDownInTheGenericMultiverse}, and the second author had looked further into issues of upward closure of the generic multiverse in \cite{Hamkins2016:UpwardClosureAndAmalgamationInGenericMultiverse}.

Turning our glance upward, it is clear that the generic multiverse does not have any maximal elements.
However, the upward directedness of the multiverse and the existence of suprema definitely
pose interesting questions.

\begin{definition}
	Let $M$ be a countable model of \ZFC and let $\mathcal{E}$ be a family of forcing extensions
	of $M$. We say that the family $\mathcal{E}$ is \emph{amalgamable} (over $M$) if there
	is a forcing extension $M[G]$ of $M$ that contains every model in $\mathcal{E}$.
\end{definition}

We shall similarly say that a family of generic objects is amalgamable if the corresponding family of generic extensions of $M$ is amalgamable.

It follows from Usuba's results that a collection $\mathcal{E}$ as above is amalgamable precisely when
it has an upper bound in the generic multiverse of $M$. Furthermore, a similar fact holds not just
for forcing extensions of $M$, but throughout the generic multiverse: Any given finite collection of
models in the multiverse has an upper bound if and only if the collection is amalgamable over
some model in the multiverse.

We should also note that the amalgamability of some family of extensions $\set{M[G_i]}{i\in I}$ does
not require the amalgamating model to have the actual set $\set{G_i}{i\in I}$ or the associated
sequence. Of course, this makes no difference in the case of finite $I$, but becomes important
when discussing the (non-)amalgamability of infinite families.

Mostowski~\cite{Mostowski1976:ModelsOfBGAxioms} essentially showed that any finite poset embeds into
the generic multiverse in a way that preserves the nonexistence of upper bounds. 
For example, this means that, given a countable model $M$, there are two forcing extensions
$M[G]$ and $M[H]$ such that no forcing extension of $M$ contains both $M[G]$ and $M[H]$;
in other words, $M[G]$ and $M[H]$ form a nonamalgamable pair.
The extensions constructed by Mostowski are all obtained via Cohen forcing, but further work in \cite{Hamkins2016:UpwardClosureAndAmalgamationInGenericMultiverse} shows 
that the nonamalgamability phenomenon is pervasive and is exhibited by a much larger family of
forcing notions. In the direction of positive structural results, Fuchs, the second author, and 
Reitz~\cite{FuchsHamkinsReitz2015:SetTheoreticGeology} showed that, while not all chains in the
generic multiverse have upper bounds, any countable increasing chain arising from posets of
uniformly bounded size does have an upper bound. Taken together, these results suggest that the
order structure of the generic multiverse is quite complex.

In this paper we will give a number of improvements to Mostowski's result, strengthening
the embedding, generalizing to certain infinite posets, and using forcing notions beyond just
Cohen forcing. All these generalizations are obtained using a powerful method we call
the \emph{blockchain construction}. The method constructs a family of generic objects by
stringing together blocks, each of which conceals a piece of forbidden information.
The blocks are furthermore structured in such a way that this secret remains hidden unless
we have access to a collection of the generics that was meant to be nonamalgamable. In that
case the whole structure of the blocks is revealed, together with the hidden information.

Our main results are the following (see \autoref{sec:Blockchain} 
and \autoref{sec:WideForcing} for definitions):

\begin{theorem}
	If a family of sets $\mathcal{A}$ is defined in $M$ by finite obstacles, then $(\mathcal{A},\subseteq)$
	$*$-embeds into the generic multiverse of $M$.
\end{theorem}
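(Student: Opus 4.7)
The plan is to exhibit, for each $A\in\mathcal{A}$, a generic object $G_A$ over $M$ and show that the map $A\mapsto M[G_A]$ is the required $*$-embedding into the generic multiverse. The construction itself will be the blockchain construction: $G_A$ is assembled as a long concatenation of small ``blocks'', indexed in a way that faithfully mirrors the set-theoretic structure of $A$, so that whenever $A\subseteq B$ the blocks making up $G_A$ appear inside those making up $G_B$. From the latter containment one reads off $M[G_A]\subseteq M[G_B]$ directly, yielding order-preservation and, via \autoref{lemma:SubsetEqualsForcingExtension}, the fact that $M[G_B]$ is a forcing extension of $M[G_A]$.

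The heart of the argument is the preservation of nonexistence of upper bounds. Here is where the hypothesis that $\mathcal{A}$ is defined by finite obstacles pays off: if $\set{A_i}{i\in I}\subseteq\mathcal{A}$ has no upper bound in $\mathcal{A}$, then some finite subfamily $A_{i_1},\ldots,A_{i_n}$ already witnesses this through a single finite configuration $F\subseteq\bigcup_k A_{i_k}$ that no member of $\mathcal{A}$ can contain. The blocks are designed so that this finite obstacle $F$ is secretly encoded across the union of the corresponding blocks: any common forcing extension of $M$ containing $M[G_{A_{i_1}}],\ldots,M[G_{A_{i_n}}]$ would piece the encoded fragments together, thereby possessing a set from which $M$ itself can read off $F$, contradicting either the definition of $\mathcal{A}$ or the genericity of the $G_{A_{i_k}}$ over $M$. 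Hence no such amalgamating extension exists.

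Carrying this out requires two parallel ingredients. First, genericity: the countability of $M$ allows us to enumerate all dense sets of the ambient block-forcing and meet them during the inductive construction, ensuring that each $G_A$ is honestly $M$-generic. Second, a surgery/alignment argument: when $A\subseteq B$, the blocks of $G_A$ must literally be retrievable from $G_B$, which means that the generics cannot be built independently but must be constructed jointly by a transfinite recursion that stages the choices block by block, mixing density meetings with the planting of all relevant finite obstacles by a bookkeeping/diagonalization.

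The main obstacle is calibrating the block structure so that both the positive and negative demands are simultaneously respected. Making $G_B$ visibly contain $G_A$ for $A\subseteq B$ pushes toward highly aligned, ``rigid'' blocks, while hiding incompatible obstacles across non-upper-bounded families pushes the blocks of different generics toward mutual genericity; these pressures have to be balanced via the surgery step, and the hardest part of the eventual proof will be verifying that this surgery can always be performed without destroying $M$-genericity and without accidentally amalgamating families that should remain unamalgamable. The finite-obstacle hypothesis is what tames this tension, reducing every potential non-amalgamability to a finite witness that can be arranged by a single coordinated surgical modification of finitely many blocks.
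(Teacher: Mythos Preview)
Your proposal has the right overall shape---a blockchain construction with bookkeeping---but two essential ideas are missing or garbled.

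First, the nonamalgamability mechanism you describe cannot work. You say that amalgamating the $M[G_{A_{i_k}}]$ would let one ``read off $F$'', the finite obstacle, and that this contradicts ``either the definition of $\mathcal{A}$ or the genericity''. But $F$ is a finite subset of $I$ sitting in $M$ from the outset; recovering it contradicts nothing. What the paper actually encodes across the columns indexed by each obstacle is a \emph{catastrophic real} $z$---for instance a real coding $\Ord^M$---which by definition cannot lie in any model of the generic multiverse of $M$. The blockchain is arranged so that possessing the generics for all indices in an obstacle set reveals the coding points and hence $z$, and \emph{that} is what rules out amalgamation. Without this device you have no contradiction.

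Second, a $*$-embedding must also preserve the nonexistence of \emph{nonzero lower bounds}, and your sketch says nothing about this. In the paper this is handled by an additional clause in the construction: at certain stages one is handed a pair of names $\sigma,\tau$ (one an $\Add(\omega,A)$-name, the other an $\Add(\omega,A')$-name) and one must arrange either that $\sigma$ is forced into $M[\Gdot\rest{A\cap A'}]$ or that $\sigma\ne\tau$ is forced. This yields the intersection property $M[c_i\mid i\in A]\cap M[c_i\mid i\in A']=M[c_i\mid i\in A\cap A']$, from which lower-bound preservation follows. This step (carried out via \autoref{lemma:ProductDecision}) is the most delicate part of the argument and cannot be omitted. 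Relatedly, note that the paper indexes the generics by $i\in I$, not by $A\in\mathcal{A}$: one builds a single family $\langle c_i\mid i\in I\rangle$ and maps $A\mapsto M[c_i\mid i\in A]$. Your per-$A$ indexing would force you to spell out precisely how $G_A$ sits inside $G_B$, and it is unclear how the intersection argument would even be formulated in that setup.
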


\begin{theorem}
	If a family of sets $\mathcal{A}$ is defined in $M$ by finite obstacles on a set $I$ and $\set{\P_i}{i\in I}$ is a
	family of wide posets in $M$, all the same size and at least as large as $I$, then $(\mathcal{A},\subseteq)$
	$*$-embeds into the generic multiverse, with the additional property that each $A\in\mathcal{A}$ maps to a forcing extension of $M$ by the product $\prod_{i\in A}\P_i$.
\end{theorem}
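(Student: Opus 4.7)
The plan is to refine the blockchain construction used in the previous theorem so that it operates with the wide posets $\P_i$ rather than only with Cohen forcing. For each $i \in I$, I will simultaneously build a $\P_i$-generic filter $G_i$ over $M$, and then define the embedding by $A \mapsto M[G_A]$, where $G_A$ denotes the induced product generic $\prod_{i\in A} G_i$ for $\prod_{i\in A}\P_i$.

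First I would fix in $M$ an enumeration $\seq{F_n}{n<\omega}$ of the finite obstacles defining $\mathcal{A}$, together with an enumeration of the dense subsets of each relevant finite product $\prod_{i\in A}\P_i$ (the latter is possible because $M$ is countable and we only need to meet the dense sets lying in $M$). The construction proceeds in stages. At stage $n$, for each coordinate $i \in I$ relevant at that stage, I would extend the partial condition in $\P_i$ being built to meet the $n$-th dense set scheduled for $\P_i$. Interleaved with this diagonalization, the $n$-th \emph{block} of the construction would encode a single bit $b_n$ of an object that no forcing extension of $M$ can contain—such as a cofinal $\omega$-sequence in $\Ord^M$ or a parameter witnessing violation of replacement—in such a way that $b_n$ is recoverable precisely when one has access to the collection $\set{G_i}{i\in F_n}$ in full.

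The wideness hypothesis on the $\P_i$, together with $|\P_i|\ge|I|$, is exactly what allows this encoding: each block requires selecting, below the current condition in each $\P_i$ with $i\in F_n$, one of at least $|F_n|$ incompatible extensions bearing a coordinated label, and wideness guarantees that enough pairwise incompatible refinements exist at every node to support a uniform labelling scheme indexed by (subsets of) $I$. To see that $G_A$ is $\prod_{i\in A}\P_i$-generic over $M$ for each $A\in\mathcal{A}$, I would note that since $A$ contains no $F_n$, the coding restrictions imposed on the blocks never constrain conditions supported in $A$; the construction therefore reduces on such $A$ to an unobstructed countable diagonalization meeting every dense set of $\prod_{i\in A}\P_i$ in $M$.

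For the $*$-embedding property, suppose $\mathcal{B}\subseteq\mathcal{A}$ has no upper bound in $\mathcal{A}$. By the finite-obstacles characterization, some $F_n$ satisfies $F_n\subseteq\bigcup\mathcal{B}$. Any common forcing extension of $\set{M[G_A]}{A\in\mathcal{B}}$ would then contain every $G_i$ with $i\in F_n$ and, running the decoding across stages, would reconstruct the forbidden object, contradicting \autoref{lemma:SubsetEqualsForcingExtension} (or a direct \ZFC violation). Hence no amalgamation exists, and the embedding preserves nonexistence of upper bounds. The main obstacle I foresee is the bookkeeping needed to make the block-labelling simultaneously compatible with genericity over every $A\in\mathcal{A}$—potentially an uncountable family—while still being strong enough to encode a forbidden bit whenever any $F_n$ is present. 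The resolution is to arrange the labelling so that, at each block, the admissible choices for a coordinate $i$ remain dense in $\P_i$ below any fixed condition, so that the diagonalization against $M$-dense sets can always be completed within the coding constraints.
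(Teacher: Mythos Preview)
Your plan captures the genericity and nonamalgamability ingredients, but it omits an entire half of what a $*$-embedding must do: preserve the nonexistence of \emph{nonzero lower bounds}. The definition requires that whenever a finite $X\subseteq\mathcal{A}$ satisfies $\bigcap X=\varnothing$, the images $\set{M[G_A]}{A\in X}$ have no common lower bound above $M$ in the generic multiverse. Nothing in your construction controls this. In the paper, this is handled by a third property of the generics---that $M[G_{A^0}]\cap M[G_{A^1}]=M[G_{A^0\cap A^1}]$ for all $A^0,A^1\in\mathcal{A}$---and securing it takes real work: at each stage one is handed a pair of names $\sigma,\tau$ for subsets of $M[\Gdot_{A^0\cap A^1}]$, and one must either verify that $\sigma$ already lands in the smaller model or else find a $\rho$ (via a product-forcing lemma) on which $\sigma$ and $\tau$ can be forced to disagree. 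This is where the case tree in the wide-forcing argument comes from, and your sketch has no analogue.

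There is also a second, more technical gap in your treatment of the coding. You write that the decoding proceeds by ``running the decoding across stages,'' but in an arbitrary wide poset there is no canonical notion of length or of padding with zeros, so one cannot recognise from $G_i$ alone where the block boundaries fall. The paper's solution is to make the coding self-referential: at each coding point one records not only the next bit of the catastrophic real but also the \emph{next} coding points themselves (as indices into the size-$\kappa$ antichains $Z(p)$), together with marker symbols indicating which branch of the case tree was taken. Only with this extra bookkeeping can the decoding be carried out from the filters on an obstacle set. Your remark that the admissible choices ``remain dense'' below any condition does not substitute for this, because the difficulty is not density but recoverability of the block structure after the fact.
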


We also have versions of this theorem for the class-generic multiverse, generated by class forcings over a model of second-order set theory. (See \autoref{sec:ClassWideForcing} for definitions.)

\begin{theorem}
	Let $(M,\mathcal{X})$ be a countable transitive model of \GBC plus the Elementary Class 
	Choice principle. If a family of sets $\mathcal{A}\in M$ is defined in $(M,\mathcal{X})$ by finite obstacles on a class $I\in\mathcal{X}$ and $\set{\P_i}{i \in I}\in \mathcal{X}$ is a family of pretame 
	$\Ord$-wide class forcing notions, then $(\mathcal{A},\subseteq)$ $*$-embeds into
	the class-generic multiverse of $(M,\mathcal{X})$, with each $A\in\mathcal{A}$ mapping to a forcing extension of $(M,\mathcal{X})$ by the product $\prod_{i\in A}\P_i$. 
\end{theorem}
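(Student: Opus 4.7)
The plan is to lift the set-sized blockchain construction from the preceding theorem to the class-forcing setting, adapting each stage of that argument to accommodate $\Ord$-wide posets and class-generic filters. I would construct, simultaneously for each $A\in\mathcal{A}$, a $\prod_{i\in A}\P_i$-generic class filter $G_A$ over $(M,\mathcal{X})$ such that the assignment $A\mapsto (M,\mathcal{X})[G_A]$ gives a $*$-embedding of $(\mathcal{A},\subseteq)$ into the class-generic multiverse. As before, the generics are assembled in blocks, and the obstructing information encoded in the blocks is recoverable only from a finite forbidden collection of the $G_A$. This hidden-secret mechanism is what ultimately prevents the nonamalgamability pattern from collapsing.

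First, I would enumerate the countably many dense subclasses that need to be met by each $G_A$ and proceed by induction on this enumeration. At stage $\alpha$ the partial generics $G_A\rest{\alpha}$ have been chosen coherently for all $A\in\mathcal{A}$; the Elementary Class Choice principle lets us simultaneously pick, in an $\mathcal{X}$-definable fashion, compatible extensions across the entire class $\set{G_A\rest{\alpha}}{A\in\mathcal{A}}$ that diagonalise against the $\alpha$th dense class. Pretameness of each $\P_i$, and of the product $\prod_{i\in A}\P_i$, guarantees that meeting the class of such dense classes produces a genuine class-generic, and that the resulting $(M,\mathcal{X})[G_A]$ continues to satisfy \GBC and so is a legitimate point of the class-generic multiverse.

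Next, I would verify the two essential properties of the embedding. Order preservation, namely $A\subseteq B\Rightarrow (M,\mathcal{X})[G_A]\subseteq (M,\mathcal{X})[G_B]$, is immediate from the product structure since $G_A=G_B\rest{A}$, and the reverse inclusion being a class-forcing extension follows by the class-forcing intermediate-model theorem, in analogy with \autoref{lemma:SubsetEqualsForcingExtension}. For non-amalgamability, I would use the $\Ord$-wideness to show that each block carries enough encoded information that recovering the secret associated to a finite forbidden set $F\subseteq I$ from the joint knowledge of $\set{G_a}{a\in F}$ would produce an object contradicting the very definition of $\mathcal{A}$ by finite obstacles; hence such $F$ cannot be amalgamated, matching the $*$-embedding requirement.

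I expect the main obstacle to be the careful handling of pretameness and class comprehension throughout the surgical modifications needed to plant the secrets in the blocks. In the set-sized theorem one can freely truncate and splice generics without leaving the realm of set forcing, but here each surgery must be carried out uniformly across the class-sized index $I$ in an $\mathcal{X}$-definable way, and the resulting doctored product must still be verified to be pretame so that amalgamation produces a \GBC-model. This is precisely the role of the Elementary Class Choice principle: it permits the uniform class-sized selection of blocks that would otherwise require second-order choice beyond \GBC, and it is the delicate interplay between ECC, pretameness, and the surgical step that will require the most work.
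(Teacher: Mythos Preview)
Your proposal misidentifies several key ingredients of the argument, and at least two of these are genuine gaps.

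First, the role of \ECC is not what you describe. The construction of the descending sequence of conditions is carried out \emph{externally}, using that $(M,\mathcal{X})$ is countable; there is no need for any internal uniform selection at the diagonalization stage. Rather, \ECC is invoked once, at the outset, to fix for each condition $p$ in each $\P_i$ a proper-class-sized maximal antichain $Z(p)$ below $p$, uniformly as a single class in $\mathcal{X}$. These antichains are the essential coding apparatus: together with a global bijection $\Ord\to V$ (from Global Choice), the index in $Z(p)$ of the unique condition compatible with the generic encodes an arbitrary element of $M$. You never mention these antichains, and without them there is no mechanism for hiding information in an $\Ord$-wide poset of unknown combinatorial shape.

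Second, you never say what the hidden secret is. The paper fixes a real $z$ catastrophic for $M$ and, exactly as in the wide set-forcing case, threads a sequence of coding points through the construction so that from the generics indexed by any finite obstacle one can reconstruct the sequence of coding points and hence $z$; this is what blocks amalgamation. Your description that recovering the secret ``would produce an object contradicting the very definition of $\mathcal{A}$ by finite obstacles'' is not the mechanism at work.

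Third, you appeal to a ``class-forcing intermediate-model theorem'' to get that inclusions are forcing extensions. The paper explicitly remarks that the intermediate model theorem can fail for tame class forcings, so this appeal is illegitimate. Fortunately it is also unnecessary: injectivity and the order-embedding property are obtained, as in the set case, from mutual genericity of the $G_i$ for $i\in A\cup A'$ whenever $A\cup A'\in\mathcal{A}$, not from any intermediate-model result. Similarly, the nonzero-lower-bound clause of the $*$-embedding uses the intersection property $(M,\mathcal{X})[G_{A^0}]\cap(M,\mathcal{X})[G_{A^1}]=(M,\mathcal{X})[G_{A^0\cap A^1}]$, which you do not address.

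Finally, the reference to ``surgical modifications'' suggests a conflation with a different technique; the argument here is a straight blockchain construction with coding points, not surgery.
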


Beyond generalizing Mostowski's result, we also demonstrate some further complexities of the generic
multiverse. For example:

\begin{theorem}
	Suppose $M\subseteq M[c_0]\subseteq M[c_1]\subseteq\dots$ is a countable chain where each $c_n$
	is a Cohen real over $M$. Then this chain admits an exact pair and, consequently, does not have a 
	supremum in the generic multiverse.
\end{theorem}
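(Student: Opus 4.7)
The statement about the absence of a supremum is a formal consequence of the exact pair: if $N^\ast$ were a supremum of the chain, then $N^\ast \subseteq M[d_0]$ and $N^\ast \subseteq M[d_1]$ for the exact pair $(M[d_0], M[d_1])$, so by the exact-pair property $N^\ast \subseteq M[c_n]$ for some $n$, contradicting that $M[c_{n+1}] \subseteq N^\ast$ since $N^\ast$ is an upper bound of the chain. So I focus on producing the exact pair.

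I would construct two Cohen-generic reals $d_0, d_1$ over $M$ by a blockchain-style finite-extension argument in an outer countable universe, using the countability of $M$ and of the chain. The construction produces increasing sequences of Cohen conditions $(p_n)$ and $(q_n)$ whose unions are $d_0$ and $d_1$. At each stage I meet, in rotating order, three families of requirements enumerated in $M$:
\begin{enumerate}
\item[(G)] \emph{Genericity}: extend into the next dense subset of Cohen forcing, so that each $d_i$ is Cohen-generic over $M$;
\item[(C)] \emph{Chain coding}: extend $(p,q)$ so that some specific $M$-name evaluates at both $d_0$ and $d_1$ to the target $c_n$, yielding $M[c_n] \subseteq M[d_0]\cap M[d_1]$. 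The assignment of names to the levels of the chain is adaptive---chosen externally to $M$---so that the sequence $\seq{c_n}{n\in\omega}$ itself does not appear in either $M[d_i]$;
\item[(D)] \emph{Exact-pair diagonalization}: for each pair $(\dot\sigma,\dot\tau)$ of Cohen names for reals in $M$, either extend $(p,q)$ to force $\dot\sigma^{d_0}\neq \dot\tau^{d_1}$, or, when no such extension exists, ensure the common value lies in $M[c_m]$ for some $m$ controlled by the stage.
\end{enumerate}

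The principal obstacle is (D), an analogue of Spector's classical exact-pair construction for the Turing degrees. The dichotomy I would establish is: if no extension of $(p_n, q_n)$ separates $\dot\sigma^{d_0}$ from $\dot\tau^{d_1}$ on any coordinate, then the common value is computable by a systematic search over extensions using only data already locked in by stage $n$, and the blockchain bookkeeping pins this data down to $M[c_m]$ for a specific $m = m(n,\dot\sigma,\dot\tau)$. Executing this dichotomy compatibly with (G) and (C)---and in particular without breaking the nonuniform coding of $\bar c$---is the technical heart of the argument.

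With $d_0, d_1$ in hand, the exact-pair property is verified as follows. Given a model $K$ in the generic multiverse with $K \subseteq M[d_0]$ and $K \subseteq M[d_1]$, each $x \in K$ is of the form $\dot\sigma^{d_0} = \dot\tau^{d_1}$ for Cohen names $\dot\sigma,\dot\tau \in M$, so (D) places $x$ in some $M[c_{m(x)}]$. The nonuniform coding in (C) keeps $\bar c$ out of both $M[d_i]$, preventing $K$ from parametrizing the entire chain; a boundedness argument applied to the single element $\mathcal{P}(\omega)^K \in K$ (and propagated through the transitive closure using the \ZFC structure of $K$) then extracts a uniform $n$ with $K \subseteq M[c_n]$, establishing the exact pair and, with it, the failure of a supremum.
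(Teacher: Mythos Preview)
Your overall strategy—a Spector-style exact-pair construction adapted to Cohen forcing—is correct and matches the paper's approach. However, there are significant differences in execution, and your verification step has a genuine gap.

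The paper's implementation is cleaner in two respects. First, before building anything, it replaces the $c_n$ by reals $c_n'$ with $M[c_n] = M[c_0', \dots, c_n']$ such that any finitely many $c_n'$ are mutually generic over $M$; this is obtained by iterating the intermediate model theorem up the tower. Second, it builds $d_0, d_1$ not as single Cohen reals but as $\Add(\omega,\omega)$-generics, arranged so that at every stage the first $N$ columns of each matrix are \emph{completely} filled and equal to $c_0', \dots, c_{N-1}'$ up to finite modification. This makes your (C) automatic and, crucially, compatible with (G): because the $c_n'$ are finitely mutually generic, the partially built matrices already meet the projection of any dense set onto the filled columns, so one can extend into the full dense set by touching only finitely many new cells. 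Your version of (C), by contrast, leaves unexplained how coding the full infinite real $c_n$ is to be reconciled with meeting dense sets using finite conditions; and your ``adaptive nonuniform coding to keep $\bar c$ out of $M[d_i]$'' is a red herring—no such precaution is needed or taken in the paper, and the fact that the coded sequence does not land in the intersection is a \emph{consequence} of (D), not something to be engineered separately.

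The gap in your verification is this: your (D) only controls pairs of names for \emph{reals}, so it does not directly place $\mathcal{P}(\omega)^K$ (which is not a real) into any $M[c_n]$, and the vague ``boundedness plus propagation through the transitive closure'' does not constitute an argument. The paper sidesteps this entirely via the intermediate model theorem: any $K$ in the generic multiverse below the Cohen extension $M[d_0]$ is itself of the form $M[e]$ for a single Cohen real $e$ (or is $M$). Since $e \in M[d_0] \cap M[d_1]$ is a real, (D) places $e$ in some $M[c_N]$, whence $K = M[e] \subseteq M[c_N]$. This one-line reduction to reals is the correct replacement for your boundedness argument.
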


Most of our results concerning (non-)amalgamability could be seen in the wider context of the \emph{hyperuniverse}, the collection of all countable transitive models of set theory of a fixed height. The study of the hyperuniverse was initiated by Friedman (see e.\,g.~\cite{ArrigoniFriedman2013:HyperuniverseProgram,AntosFriedmanHonzikTernullo2018:HyperuniverseProjectAndMaximality}) and a number of structural properties of both it and the generic multiverse, including forms of some of the results in our paper, were described by Friedman in a tutorial on the topic given in Münster (see~\cite{Friedman2012:HyperuniverseTutorial} for the slides).

\section{The Blockchain Argument---Following Mostowski}
\label{sec:Blockchain}

In this section we will present an adaptation of Mostowski's original argument 
from~\cite{Mostowski1976:ModelsOfBGAxioms} to the context of the generic multiverse, as well as
an improved version thereof. Our purpose in recounting Mostowski's proof is twofold:
first, it will serve as a template for our own proofs 
going forward, and second, the result seems largely unknown and we would like to highlight
it (indeed, we had been unaware of Mostowski's result and found the reference only after having made essentially similar arguments on our own). To be clear,
Mostowski was interested in studying what was substantially a multiverse of countable models
of second-order set theory with a fixed first-order part, together with the corresponding notion of
amalgamability. Nevertheless, the methods apply equally well to the generic multiverse setting (and
we will return to the second-order context in \autoref{sec:ClassWideForcing}).

\begin{theorem}[Mostowski]
\label{thm:Mostowski}
	Let $\ell<\omega$ and let $\mathcal{A}$ be a family of subsets of $\{0,1,\dotsc,\ell\}$ containing all singletons and closed under subsets. 
	Then there are Cohen reals $c_0,c_1,\dotsc,c_\ell$ such that, for any $A\subseteq
	\{0,1,\dotsc,\ell\}$, the family $\set{M[c_n]}{n\in A}$ is amalgamable precisely when
	$A\in\mathcal{A}$.
\end{theorem}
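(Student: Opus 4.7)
Let $F_1,\dotsc,F_k$ enumerate the minimal elements of $\mathcal{P}(\{0,1,\dotsc,\ell\})\setminus\mathcal{A}$; since $\mathcal{A}$ contains all singletons, each $|F_j|\geq 2$, and because $\mathcal{A}$ is closed under subsets the theorem reduces to constructing $c_0,\dotsc,c_\ell\in 2^\omega$ so that, on the one hand, for every $A\in\mathcal{A}$ the tuple $(c_n)_{n\in A}$ is jointly $M$-generic for the product Cohen forcing $\Add(\omega,A)^M$, giving $M[(c_n)_{n\in A}]$ as a single forcing extension amalgamating every $M[c_n]$ with $n\in A$; and on the other hand, for every $j$, the family $\{M[c_n]:n\in F_j\}$ is not amalgamable in the generic multiverse of $M$. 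The non-amalgamability of every $A\notin\mathcal{A}$ then follows from the second point, since such $A$ must contain some $F_j$.

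The blockchain construction is carried out in $V$ by finite approximations, exploiting the countability of $M$. Fix a partition of the bit positions of each $c_n$ into a disjoint family of finite coding intervals $I^j_m$ indexed by $j\in\{1,\dotsc,k\}$ and $m<\omega$, using the same partition for every $n$. For each $j$ designate a ``locked'' coordinate $n_j\in F_j$ and a secret target $\vec\alpha^j\in V$, whose precise choice is the crux of the argument and is discussed below. Impose the coding constraint that on each block $I^j_m$ the XOR $\bigoplus_{n\in F_j}(c_n\upharpoonright I^j_m)$ equals the $m$-th fragment of a fixed injective binary encoding of $\vec\alpha^j$. Bits of $c_n\upharpoonright I^j_m$ for $n\notin F_j$ are unconstrained, and the constraint is realized by freely choosing the bits for $n\in F_j\setminus\{n_j\}$ and then computing the locked bits of $c_{n_j}\upharpoonright I^j_m$ so that the XOR comes out right. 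Because the coding blocks for distinct $j$ are disjoint, the locked positions for different forbidden sets never conflict, even when the same coordinate happens to serve as $n_j$ for several $j$.

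Enumerate in $V$ all pairs $(A,D)$ with $A\in\mathcal{A}$ and $D\in M$ a dense subset of $\Add(\omega,A)^M$, and interleave dedicated ``free'' intervals between successive coding blocks. At each stage, extend the current approximations using bits in the next free interval to meet the next enumerated $D$---possible by density, since the free bits are wholly unconstrained---and then fill in the next coding block by the rule above. After $\omega$ stages the $c_n$ are fully defined, and for each $A\in\mathcal{A}$ the tuple $(c_n)_{n\in A}$ meets every such $D$, hence is jointly $M$-generic and amalgamates. The essential observation is that $F_j\not\subseteq A$ for every $j$, so whenever $n_j\in A$ the locked bits of $c_{n_j}$ in block $I^j_m$ depend on free bits of some $c_n$ with $n\in F_j\setminus A$ that lie outside the view of $(c_n)_{n\in A}$; those locked bits therefore appear as unconstrained noise from that viewpoint and present no obstacle to joint genericity.

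For non-amalgamability, suppose toward contradiction that some $N$ in the generic multiverse of $M$ contains every $M[c_n]$ with $n\in F_j$. By \autoref{lemma:SubsetEqualsForcingExtension}, $N=M[G]$ for some $M$-generic $G$, and since every relevant $c_n$ lies in $M[G]$, the $M$-definable decoding rule applied to the blocks $I^j_m$ recovers $\vec\alpha^j$ inside $M[G]$. The main obstacle---and the delicate part of the argument---is arranging the targets $\vec\alpha^j\in V$ so that such recovery is impossible; a natural route is to take each $\vec\alpha^j$ to be a cofinal $\omega$-sequence in a suitably large regular $M$-cardinal $\kappa_j$, chosen so that no forcing extension of $M$ compatible with the individually Cohen-generic $c_n$ can change the cofinality of $\kappa_j$ to $\omega$ (or alternatively, to pick $\vec\alpha^j$ coding a real outside the generic multiverse of $M$ when such reals are available). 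Once such targets are fixed, the blockchain bookkeeping above delivers the desired non-amalgamability and completes the proof.
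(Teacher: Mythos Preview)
Your proposal has the right overall shape---interleave genericity moves with coding moves that hide a secret recoverable only from a full forbidden set $F_j$---but two points are not resolved, and the first is a genuine error.

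The choice of secret $\vec\alpha^j$ is the crux, as you acknowledge, and the suggestion to take a cofinal $\omega$-sequence in a regular $M$-cardinal $\kappa_j$ does not work: the generic multiverse of $M$ is closed under \emph{all} set-forcing extensions, including $\Coll(\omega,\kappa_j)$, so there is no regular cardinal of $M$ whose cofinality cannot be changed to $\omega$ somewhere in the multiverse. Your parenthetical alternative---a real outside the generic multiverse---is the correct idea and is \emph{always} available for countable transitive $M$: any real coding $\Ord^M$ cannot appear in any transitive \ZFC model of the same height and hence lies in no forcing extension of $M$. The paper calls such a real \emph{catastrophic} and fixes one at the outset; without committing to it, the non-amalgamability direction is left unproved.

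There is also a tension in your bookkeeping. You need the block partition $\{I^j_m\}$ to be $M$-definable so that the decoder inside a putative amalgam can locate the blocks, yet you also claim each dense $D$ is met ``using bits in the next free interval.'' With a partition fixed in advance, that interval has a predetermined finite length, and density of $D$ gives no bound on how long an extension must be; the justification ``possible by density, since the free bits are wholly unconstrained'' does not follow. The paper avoids this entirely by placing coding points \emph{adaptively}: after extending the active columns (those in the current $A\in\mathcal{A}$) into $D$ and padding the inactive ones with $0$s, it appends a single all-$1$ row as a marker followed by one bit of the catastrophic real $z$. The decoder, given $\{c_k:k\in B\}$ for $B\notin\mathcal{A}$, recovers the coding points as exactly those rows where every column in $B$ carries a $1$---this works because no $A\in\mathcal{A}$ covers $B$, so the padding ensures such rows occur only at the deliberately inserted markers. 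Your XOR mechanism could be repaired along similar lines (adaptive placement with some self-delimiting marker so the decoder can find the blocks), but the fixed-partition account as written has a gap.
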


In the proof we will construct the reals $c_n$ step by step, meeting requirements for amalgamability
and nonamalgamability in turn. Since the nonamalgamability steps are the key ones, we first present
a simpler construction, where these are the only steps we need to make.

\begin{proposition}
\label{prop:NonamalgamablePair}
	There are Cohen reals $c,d$ over $M$ such that $M[c]$ and $M[d]$ are nonamalgamable.
\end{proposition}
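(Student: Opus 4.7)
The plan is to construct Cohen-generic reals $c, d \in 2^\omega$ over $M$ in stages in $V$, together with an auxiliary set $S \subseteq \omega$, maintaining the invariant that $c(n) = d(n) \iff n \in S$. Since $S$ will be computable from the pair $(c, d)$, if $M[c]$ and $M[d]$ were both contained in a common forcing extension $M[H]$ of $M$, then $S$ would lie in $M[H]$; applying Lemma~\ref{lemma:SubsetEqualsForcingExtension} to the chain $M \subseteq M[S] \subseteq M[H]$ would then make $M[S]$ a forcing extension of $M$, so that $S$ would be \emph{set-generic} over $M$ in the sense that $S = \dot{r}^G$ for some $\P \in M$, some $\P$-name $\dot{r} \in M$, and some $\P$-generic $G$ over $M$. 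We will arrange that no such $(\P, \dot{r}, G)$ exists, so no such amalgamating $M[H]$ exists.

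The construction dovetails in $V$ three enumerations drawn from $M$ (each countable since $M$ is): the dense subsets of Cohen forcing $2^{<\omega}$ for $c$, likewise for $d$, and all triples $(\P, \dot{r}, q)$ where $\P \in M$ is a forcing notion, $\dot{r} \in M$ is a $\P$-name for an element of $2^\omega$, and $q \in \P$ is a condition. At each stage we maintain finite approximations $s_n$ and $t_n$ to $c$ and $d$ of equal length, together with a partial characteristic function $\sigma_n$ of $S$ on the same interval, satisfying $s_n(i) = t_n(i) \iff i \in \sigma_n$. Extending just $s$ or just $t$ to meet a Cohen dense set constrains only one of the strings, leaving the other string's new bits --- and hence the new bits of $S$ --- free to choose.

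The heart of the argument is the diagonalization stage handling $(\P, \dot{r}, q)$: we pick a coordinate $k$ strictly beyond the current approximation length and every previously committed coordinate, on which $q$ decides $\dot{r}(k) = j$, and commit $S(k) = 1 - j$, padding $s_n$ and $t_n$ out to length $k+1$ consistently with the invariant. The main obstacle is the bookkeeping: the enumeration must handle every condition $q$ in every $\P$, not merely one representative per $(\P, \dot{r})$ pair, because a $\P$-generic filter may pass through any condition; and fresh coordinates must always be available in the decision set of the condition being handled, which is arranged by choosing $k$ large enough and dovetailing the enumeration carefully (refining $q$ and scheduling the refinement as its own triple whenever its own decision set has been exhausted).

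For the verification, suppose for contradiction that $\dot{r}^H = S$ for some $(\P, \dot{r}) \in M$ and some $\P$-generic $H$ over $M$. The set $D = \set{q \in \P}{q \text{ decides some coordinate of } \dot{r}}$ lies in $M$ and is dense in $\P$, so $H \cap D$ is nonempty; pick $q \in H \cap D$. At the stage handling $(\P, \dot{r}, q)$ the construction committed $S(k) = 1 - j$ with $q \Vdash \dot{r}(k) = j$; since $q \in H$, we have $\dot{r}^H(k) = j \neq S(k)$, contradicting $\dot{r}^H = S$. Hence $M[c]$ and $M[d]$ cannot be amalgamated.
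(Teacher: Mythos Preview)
Your diagonalization has a genuine gap. In the verification you pick $q\in H\cap D$ and assert that ``at the stage handling $(\P,\dot r,q)$ the construction committed $S(k)=1-j$ with $q\Vdash\dot r(k)=j$.'' But nothing guarantees this. Take $\P=\Add(\omega,1)$ and $\dot r$ the canonical name for the generic; a condition $q$ of length $n$ decides $\dot r(k)$ only for $k<n$. If the triple $(\P,\dot r,q)$ is enumerated after the approximation has already passed length $n$, every coordinate $q$ decides is already committed, and you cannot act. Your proposed fix---refine $q$ to some $q'$ and schedule $(\P,\dot r,q')$---does not rescue the verification, because the generic $H$ you are arguing against need not contain $q'$. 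So you cannot conclude that \emph{some} $q\in H$ was successfully handled. More generally, what you are attempting is to build by direct diagonalization a real $S$ lying in no forcing extension of $M$; this amounts to defeating, for every name $\dot r$, \emph{every} $M$-generic at once, and a single commitment $S(k)=1-j$ at each stage only kills the generics through one condition, not all of them.

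The paper sidesteps this entirely. Rather than manufacture such an $S$ by diagonalization, it simply \emph{fixes} a catastrophic real $z$---for instance any real coding $\Ord^M$, which visibly lies in no model of \ZFC of height $\Ord^M$---and then builds $c$ and $d$ in alternating blocks so that $z$ is recoverable from the pair. The genericity of $c$ and $d$ is handled by meeting dense sets, and the nonamalgamability is automatic once $z$ is encoded. Your idea of making the ``hidden'' information recoverable from $c$ and $d$ together is exactly right; what is missing is the observation that one does not need to construct the hidden information, only to choose it.
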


\begin{definition}
	Let $M$ be a countable transitive model of \ZFC. A real $z$ is \emph{catastrophic} for $M$
	if the height of any model of \ZFC containing $z$ is greater than that of $M$.
\end{definition}

For example, any real coding the height of $M$ will definitely be catastrophic
for $M$. The important feature of catastrophic reals is that they cannot appear in any
model in the generic multiverse of $M$. Consequently, our strategy for ensuring nonamalgamability
will be to code catastrophic reals between various generics.

\begin{proof}[Proof of \autoref{prop:NonamalgamablePair}]
	We will build two descending sequences of Cohen conditions $p_n$ and $q_n$ that will each
	generate their own generic real $c$ and $d$, and along the way we will also make sure
	to split a code of a catastrophic real between $c$ and $d$.
	
	Enumerate the open dense subsets of $\Add(\omega,1)$ in $M$ as $\seq{D_n}{n<\omega}$ and fix
	a catastrophic real $z$ for $M$. Suppose that we have already built conditions
	$p_m,q_m$ for $m<2n$ and each pair of conditions $p_m,q_m$ has the same length. 
	At this stage of the construction we are handed the open dense set $D_n$; we properly extend the
	condition on the $p$-side into $D_n$ and pad the condition on the $q$-side to the
	same length with $0$s. Finally, extend each of the resulting conditions by appending a $1$ and
	then the bit $z(2n)$. This gives us the conditions $p_{2n},q_{2n}$. At the next stage
	of the construction we do the exact same thing but with the roles of the two conditions
	switched and append the bit $z(2n+1)$. We illustrate the construction in 
	\autoref{fig:nonamalgamablePair}.
	
\begin{figure}[h]
	\begin{tikzpicture}[scale=.38,rotate=270]
	% main outline
	\draw[ultra thick] (0,34) -- (0,0) -- (2,0) -- (2,34);
	\draw[dotted,ultra thick] (0,34) -- (0,36) (2,34) -- (2,36); 
	% label the rows
	\draw node at (.5,-1.5) {$c\to$};
	\draw node at (1.5,-1.5) {$d\to$};
	% coding rows
	\foreach \r/\z [count=\c from 0] in {6/0,11/1,17/0,24/0,32/1} {
		\foreach\i in {0,1} 
		{\draw (\i,\r) rectangle node[scale=.7] {$1$} +(1,1);
			\draw (\i,\r+1) rectangle node[scale=.7] {$\z$} +(1,1);
		}
		\draw (3.5,\r+1.2) node[scale=.6,align=center] {$\begin{aligned}&\uparrow\\z(\c)&=\z\end{aligned}$};}
	% the blocks
	\foreach \b/\f/\t in {0/0/5,1/8/10,0/13/16,1/19/23,0/26/31}
	{\foreach \i in \b {
			\foreach \r in {\f,...,\t} {
				\draw[pattern=north east lines] (\i,\r) rectangle +(1,1);
			}
	}}
	\end{tikzpicture}
	\caption{Coding $z$ between the two generics. The clear areas are padded with $0$s and each
	point coding a bit of $z$ is preceded by a start marker---a single '1'.}
	\label{fig:nonamalgamablePair}
\end{figure}

	We now let $c:=\bigcup_n p_n$ and $d:=\bigcup_n q_n$. Clearly both $c$ and $d$
	are Cohen reals over $M$. But note that any model containing both of them would
	be able to recover $z$, since the places where we coded $z$ are precisely those
	places which follow a start marker (i.\,e.\ both $c$ and $d$ have a $1$ on the previous
	place---the start marker---and at least one of $c$ and $d$ has a $0$ immediately 
	preceding the start marker). This means that
	$M[c]$ and $M[d]$ cannot be amalgamated, since $z$ is catastrophic for $M$.
\end{proof}

The key to this argument was that the two generic reals were built in blocks, and within
each block only one of them was permitted to be active, that is, to have nonzero bits.
This allowed us to use the places where both were active to mark the endpoints of the blocks
and thus the coding points where $z$ was hidden.

The proof of Mostowski's theorem will use the same strategy, building
a chain of blocks and only permitting certain columns to be active within each block.

\begin{proof}[Proof of \autoref{thm:Mostowski}]
	Fix an enumeration in order type $\omega$ of all pairs $(A,D)$ where $A\in\mathcal{A}$
	and $D\in M$ is an open dense subset of $\Add(\omega,A)$. Also fix a catastrophic real $z$
	for $M$. We again build descending sequences of conditions $p^n_k$ for $k\leq\ell$, seen as filling in an
	$\omega$-by-$(\ell+1)$ matrix with $0$s and $1$s. The columns of this matrix will grow
	into the desired generic reals $c_k$. We will also
	ensure that at each step of the construction we have completely filled in the matrix
	below some row and that there are no entries above this row.
	
	Suppose that we are at some stage in our construction and we are handed an $A\in\mathcal{A}$
	and a dense set $D\subseteq\Add(\omega,A)$. We now properly extend the columns with indices in $A$
	to collectively meet the dense set $D$ and pad the remaining columns with $0$s to make sure
	that all columns have the same height. Finally, extend each column by appending
	a $1$ and then the next bit of $z$. We illustrate the first few steps of this construction
	in \autoref{fig:setTheoreticBlockchain}.
	
	\begin{figure}[h]
		\begin{tikzpicture}[scale=.35]
		% main outline
		\draw[ultra thick] (0,26) -- (0,0) -- (10,0) -- (10,26);
		\draw[dotted,ultra thick] (0,26) -- (0,28) (10,26) -- (10,28); 
		% coding rows
		\foreach \r/\z [count=\c from 0] in {6/0,11/1,17/0,24/0} {
			\foreach\i in {0,...,9} 
			{\draw (\i,\r) rectangle node[scale=.7,opacity=1] {$1$} +(1,1);
				\draw (\i,\r+1) rectangle node[scale=.7,opacity=1] {$\z$} +(1,1);
			}
			\draw (12.5,\r+1.5) node[scale=.7] {$\leftarrow z(\c)=\z$};}
		% the blocks
		\foreach \b/\f/\t in {{2,3,4,5}/0/5,{3,4,5,6,7}/8/10,{0,1,2,3,7,8,9}/13/16,{2,3,4,5}/19/23}
		{\foreach \i in \b {
				\foreach \r in {\f,...,\t} {
					\draw[pattern=north east lines] (\i,\r) rectangle +(1,1);
				}
		}}
		\end{tikzpicture}
		\caption{The set-theoretic blockchain of \autoref{thm:Mostowski}. Clear areas are again padded with $0$s.}
	\label{fig:setTheoreticBlockchain}
	\end{figure}

	This completes the construction and we can let $c_k:=\bigcup_n p^n_k$. Given any
	$A\in\mathcal{A}$, the columns in $A$ were given a chance to meet all of the dense subsets
	of $\Add(\omega,A)$, and so $\set{c_k}{k\in A}$ are mutually generic over $M$, and consequently they are amalgamable.
	
	On the other hand, suppose we are given $\set{c_k}{k\in B}$ for some
	$B\notin\mathcal{A}$. Observe that the only places where all of the columns in $B$ are active
	are exactly the coding points for $z$, since $\mathcal{A}$ is closed under subsets, so in between the coding
	points $B$ always has to have a column padded with $0$s. It follows
	that, given the reals $c_k$ for $k\in B$, we can recover the catastrophic real $z$,
	which means that the corresponding extensions are not amalgamable.
\end{proof}

It is not difficult to see that any finite poset embeds into a family of sets of natural numbers
closed under subsets in a way that preserves the nonexistence of upper bounds. It therefore follows
from Mostowski's theorem that any finite poset embeds into the generic multiverse while preserving
nonamalgamability. This has an interesting consequence for the theory of the generic multiverse.

\begin{corollary}
	\label{cor:ExistsTheoryDecidable}
	For each $0<n<\omega$ let $A_n(x_0,\dotsc,x_n)$ be the predicate in the language of
	a partial ordering asserting that the elements $x_0,\dotsc,x_n$ have a common upper bound.
	Then the $\exists$-theory of the generic multiverse in the language 
	$\{\leq\}\cup\set{A_n}{0<n<\omega}$ is decidable.
\end{corollary}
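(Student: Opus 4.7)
The plan is to reduce decidability of the $\exists$-theory to a finite combinatorial check. Given an existential sentence $\exists x_0\cdots\exists x_n\, \varphi(x_0,\dotsc,x_n)$ in the language $\{\leq\}\cup\set{A_k}{0<k<\omega}$, its satisfiability in the multiverse depends only on the \emph{combinatorial type} of a potential witness, namely a partial order $\leq^*$ on the variables together with a family $\mathcal{A}\subseteq\pow(\{x_0,\dotsc,x_n\})$ recording which subsets are supposed to amalgamate. I would call such a type \emph{consistent} if $\mathcal{A}$ contains all singletons, is closed under subsets, and whenever $x_i\leq^* x_j$ and $S\cup\{x_j\}\in\mathcal{A}$, also $S\cup\{x_i\}\in\mathcal{A}$ (since by \autoref{lemma:SubsetEqualsForcingExtension} any upper bound of $x_j$ also bounds $x_i$).

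Next I would show every consistent type is realized in the generic multiverse. Associate to each $x_i$ its down-set $B_i=\set{j}{x_j\leq^* x_i}$ and form the subset-closed family $\mathcal{A}'=\set{T\subseteq\{0,\dotsc,n\}}{T\subseteq\bigcup_{i\in S}B_i\text{ for some }S\in\mathcal{A}}$. Applying \autoref{thm:Mostowski} to $\mathcal{A}'$ produces Cohen reals $c_0,\dotsc,c_n$ whose amalgamability pattern is exactly $\mathcal{A}'$, with the reals in each amalgamable family being mutually generic over $M$. Setting $N_i:=M[\seq{c_j}{j\in B_i}]$, one verifies $N_i\subseteq N_{i'}\iff B_i\subseteq B_{i'}\iff x_i\leq^* x_{i'}$, where the nontrivial direction uses \autoref{lemma:SubsetEqualsForcingExtension} together with mutual genericity to prevent $c_i$ from slipping into $N_{i'}$ when $i\notin B_{i'}$. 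Similarly $\set{N_{i_\ell}}{\ell\leq k}$ amalgamates iff $\bigcup_\ell B_{i_\ell}\in\mathcal{A}'$, and the latter is equivalent to $\set{x_{i_\ell}}{\ell\leq k}\in\mathcal{A}$ by iterated application of the order-compatibility axiom.

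Finally, since there are only finitely many partial orders on $n+1$ elements and finitely many candidate families $\mathcal{A}$, the decision procedure enumerates all consistent types, evaluates $\varphi$ as a purely Boolean test on each, and accepts the input sentence iff some type satisfies $\varphi$. The main obstacle is the realizability step: ensuring that the subset-closure operation $\mathcal{A}\mapsto\mathcal{A}'$ does not silently introduce amalgamabilities beyond those in the original type. This is precisely where the order-compatibility axiom earns its keep, and it is why the stated consistency conditions are both necessary and sufficient for realizability.
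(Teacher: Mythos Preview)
Your proposal is correct and follows essentially the same approach as the paper. The paper's proof is terser: it observes that the quantifier-free requirements can be checked for consistency computably, and if consistent they can be realized in a finite poset, which then embeds into the multiverse preserving nonamalgamability via Mostowski's theorem (using the down-set map $p\mapsto p{\downarrow}$). Your argument unpacks exactly this, making the consistency conditions explicit (closure under subsets, singletons, and your order-compatibility axiom) and carrying out the down-set construction $x_i\mapsto B_i$ directly at the level of Cohen reals rather than routing through an abstract finite poset first. One small simplification you might note: under your stated consistency conditions one actually has $\mathcal{A}'=\mathcal{A}$, since any $T\subseteq\bigcup_{i\in S}B_i$ with $S\in\mathcal{A}$ lands in $\mathcal{A}$ by iterating order-compatibility and then taking a subset; so the passage to $\mathcal{A}'$ is harmless but unnecessary.
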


\begin{proof}
	Consider an existential sentence in the specified language. It asserts that there
	are some objects $x_0,\dotsc,x_m$ of which some quantifier-free formula $\psi(\vec{x})$ holds.
	The formula $\psi$, in turn, says that some of these objects are ordered in particular ways
	and that certain collections of them do or do not have upper bounds.
	One can clearly check in a computable way whether
	these requirements are consistent. If they are, we can realize them in a finite poset and	
	our observation above implies that this pattern may be found in the generic multiverse, and therefore that the existential sentence we started with is true in the multiverse.
\end{proof}

In the next theorem we shall extend Mostowski's \autoref{thm:Mostowski} to also include certain 
infinite posets and also to have the embedding respect nonzero lower bounds.

\begin{definition}
	Let $P$ and $Q$ be posets with least elements $0_P$ and $0_Q$, respectively. An injective map
	$f\colon P\to Q$ is a \emph{$*$-embedding} if:

	\begin{itemize}
		\item $x\leq y \iff f(x)\leq f(y)$ for any $x,y\in P$,
		\item any finite subset $X$ of $P$ has an upper, or nonzero lower, bound in $P$
		if and only if $f[X]$ has an upper, or nonzero lower, bound in $Q$, respectively.\qedhere
	\end{itemize}
\end{definition}
It follows from the definition that a $*$-embedding preserves the
least element. Furthermore, the forward direction of the equivalences in the second bullet point
easily follows from the fact that $f$ is increasing.

\begin{definition}
	A family of sets $\mathcal{A}$ is \emph{defined by a set of obstacles} $\mathcal{B}$ on a set $I$ if $\mathcal{A}$ consists of the subsets of $I$ that do not contain any element of $\mathcal{B}$ as a subset.
\end{definition}

Note that any such family is closed under subsets, since if a set avoids all the obstacles, then so does any subset. For a given family $\mathcal{A}$ defined by obstacles, we may assume without loss that $I=\bigcup\mathcal{A}$, simply by replacing $I$ with $\bigcup\mathcal{A}$. This amounts to assuming that $\mathcal{A}$ contains all singletons from $I$. We may also assume that the obstacle sets are subsets of $I$ and that they all have at least two elements. So we will generally make these additional assumptions without remark. We shall be principally interested in the case where all the obstacles are finite.

\begin{theorem}
\label{thm:GeneralBlockchain}
	Suppose that a family of sets $\mathcal{A}$ is defined in $M$ by finite obstacles on a set $I$. 
	Then there are $M$-generic Cohen reals $\set{c_i}{i\in I}$, with the following properties:
	
	\begin{enumerate}
		\item If $A\in\mathcal{A}$, then $\seq{c_i}{i\in A}$ is generic for $\Add(\omega,A)$
		over $M$.
		\item If $B\in M$, $B\subseteq I$ and $B\notin\mathcal{A}$, then the family $\set{M[c_i]}{i\in B}$
		does not amalgamate in the generic multiverse.
		\item If $A,A'\in \mathcal{A}$ then 
		$M[c_i\mid i\in A]\cap M[c_i\mid i\in A']=M[c_i\mid i\in A\cap A']$.
	\end{enumerate}
\end{theorem}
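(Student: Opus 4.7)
The plan is to adapt the blockchain construction from \autoref{thm:Mostowski} to a possibly infinite index set $I$, with bookkeeping to cope with infinitely many columns and a more delicate analysis of property (3). Since $M$ is countable, the set $I$, the collection of obstacles $\mathcal{B}$ defining $\mathcal{A}$, and the family of pairs $(A,D)$ with $A\in\mathcal{A}$ and $D\in M$ open dense in $\Add(\omega,A)$ are all externally countable. Without loss of generality I take $\mathcal{B}$ to consist of $\subseteq$-minimal forbidden sets, so that $\mathcal{B}$ is an antichain.

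Fix a catastrophic real $z$ for $M$ and externally enumerate $\seq{(A_n,D_n,O_n,j_n)}{n<\omega}$ so that every $(A,D)$-pair, every $O\in\mathcal{B}$, and every $j\in I$ appears cofinally. I would build a descending sequence of conditions $p_n\in\Add(\omega,I)$ satisfying the \emph{rectangular invariant} $\dom(p_n)=F_n\times[0,h_n)$ for some finite $F_n\subseteq I$ and $h_n<\omega$. At stage $n$: first augment $F_n$ by $\{j_n\}\cup O_n$ together with the support of the extension we plan to add, padding any new columns with $0$s to the current height; next, meet $D_n$ by extending the columns in $A_n\cap F_n$ into $D_n$ while padding $F_n\setminus A_n$ with $0$s; finally, append a \emph{coding block} for $O_n$ in which the columns of $O_n$ receive a distinctive marker pattern ending with the bit $z(n)$, while $F_n\setminus O_n$ is padded with $0$s throughout the block. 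Because $\mathcal{A}$ is closed under subsets and $\mathcal{B}$ is an antichain, this marker cannot appear inside any dense-meeting block (since $O_n\not\subseteq A$ for any $A\in\mathcal{A}$, at least one column of $O_n$ is padded to $0$ there), nor inside a coding block for any $O_{n'}\neq O_n$ (since $O_n\not\subseteq O_{n'}$ by minimality, some column of $O_n$ lies in $F_{n'}\setminus O_{n'}$ and is padded).

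Properties (1) and (2) then follow essentially as in the proofs of \autoref{prop:NonamalgamablePair} and \autoref{thm:Mostowski}. The cofinality of the enumeration of $(A,D)$-pairs yields (1) immediately. For (2), given $B\in M$ with $B\notin\mathcal{A}$, pick any obstacle $O\in\mathcal{B}$ with $O\subseteq B$; by the marker identifiability established above, from $\seq{c_i}{i\in O}$ one can locate every coding row for $O$ and read off the bits of $z$, so $\set{M[c_i]}{i\in B}$ cannot amalgamate inside the generic multiverse of $M$.

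The main obstacle is property (3). The naive product-lemma route fails here, since it would require $\seq{c_i}{i\in A\cup A'}$ to be $\Add(\omega,A\cup A')$-generic over $M$, which is impossible whenever $A\cup A'$ contains an obstacle. Instead, the plan is to establish directly the conditional mutual genericity of $\seq{c_i}{i\in A\setminus A'}$ and $\seq{c_i}{i\in A'\setminus A}$ over $N:=M[\seq{c_i}{i\in A\cap A'}]$. Given an open dense set $D\in N$ in the product $\Add(\omega,A\setminus A')\times\Add(\omega,A'\setminus A)$ with $M$-name $\dot D$, the associated set
\[
\tilde D=\set{(p_{A\cap A'},p_{A\setminus A'},p_{A'\setminus A})}{p_{A\cap A'}\forces(p_{A\setminus A'},p_{A'\setminus A})\in\dot D}
\]
is open dense in $\Add(\omega,A\cup A')$, and I would aim to decompose $\tilde D$ into dense requirements that project into $\Add(\omega,A)$ and $\Add(\omega,A')$ separately; since $A,A'\in\mathcal{A}$, the blockchain meets each piece, and the coding blocks—concentrated on obstacle columns that necessarily escape both $A$ and $A'$—can be shown not to add any definable object to both $M[c_i\mid i\in A]$ and $M[c_i\mid i\in A']$ simultaneously. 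Once this mutual genericity over $N$ is in place, the standard product lemma delivers the intersection equality $M[c_i\mid i\in A]\cap M[c_i\mid i\in A']=M[c_i\mid i\in A\cap A']$. The hardest point will be making this density decomposition rigorous and ruling out any leakage of coding artifacts into the intersection.
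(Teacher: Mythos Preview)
Your handling of properties (1) and (2) is essentially the paper's approach and is fine. The genuine gap is your plan for property (3).

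You propose to establish that $\seq{c_i}{i\in A\setminus A'}$ and $\seq{c_i}{i\in A'\setminus A}$ are mutually generic over $N=M[c_i\mid i\in A\cap A']$. But this is impossible whenever $A\cup A'\notin\mathcal{A}$: by the product lemma, mutual genericity of these two pieces over $N$, combined with the $\Add(\omega,A\cap A')$-genericity of $\seq{c_i}{i\in A\cap A'}$ over $M$, would make $\seq{c_i}{i\in A\cup A'}$ fully $\Add(\omega,A\cup A')$-generic over $M$, directly contradicting property (2). So the very conditional mutual genericity you aim for is incompatible with the nonamalgamability you have already arranged. Your fallback idea of ``decomposing $\tilde D$ into dense requirements that project into $\Add(\omega,A)$ and $\Add(\omega,A')$ separately'' cannot rescue this: meeting every such $\tilde D$ is precisely what mutual genericity over $N$ means, and you have just seen that this fails.

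The paper takes a different route for (3). It adds to the enumeration a third type of requirement: pairs $A,A'\in\mathcal{A}$ together with an $\Add(\omega,A)$-name $\sigma$ and an $\Add(\omega,A')$-name $\tau$ for subsets of $M[\Gdot\rest{A\cap A'}]$. When such a requirement appears, if the current condition already forces $\sigma\in M[\Gdot\rest{A\cap A'}]$, nothing is done; otherwise \autoref{lemma:ProductDecision} supplies an $\Add(\omega,A\cap A')$-name $\rho$ that no $(A\cap A')$-extension of the current condition decides in $\sigma$, and one then extends (first on $A$, then on $A'$, with a rollback in one subcase) so as to decide $\rho\in\sigma$ and $\rho\in\tau$ in opposite ways, hence force $\sigma\neq\tau$. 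Property (3) then follows by $\in$-induction: any $x$ in the intersection has names $\sigma,\tau$ that were handled at some stage, and since they both evaluate to $x$ the ``force $\sigma\neq\tau$'' branch could not have been taken, so $\sigma$ was forced into $M[\Gdot\rest{A\cap A'}]$. This name-by-name diagonalization, not any form of mutual genericity, is the missing idea in your proposal.
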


In the hypotheses of the theorem we of course intend that the set $I$ as well as the
set of finite obstacles and $\mathcal{A}$ itself are elements of the model $M$. The same
will be true when we discuss corollaries and other versions of this result, such as
\autoref{thm:WideAmalgamability} and \autoref{thm:WideClassForcing}.

\begin{definition}
	Let $I$ be a set and $\P_i$ for $i\in I$ posets. If $p,q\in\prod_{i\in I}\P_i$ and
	$J\subseteq I$, we say that $p$ is a \emph{$J$-extension} of $q$, and write $p\leq_J q$, 
	if $p\leq q$ and $p\rest{I\setminus J}=q\rest{I\setminus J}$.
\end{definition}

Thus, a $J$-extension strengthens the condition only on the coordinates in $J$, leaving the other coordinates untouched. We emphasize that the relation $\leq_J$ is not simply the pullback of the order on 
$\prod_{i\in J}\P_i$.

Let us point out a useful fact about product forcing that we will rely on several times in the rest 
of the paper.
Suppose that $p$ is a condition in $\P=\prod_{i \in I}\P_i$, we have some $J\subseteq I$,
and $\phi$ is a $\Delta_0$~sentence in the forcing language of $\P_J=\prod_{i\in J} \P_i$.
It is straightforward to check that $p\forces_{\P}\phi$ if and only if 
$p\rest J\forces_{\P_J}\phi$. It follows from this that any condition in $\P$ has a
$J$-extension which decides $\phi$.
In the coming proofs we shall use this fact without further comment.

\begin{lemma}
\label{lemma:ProductDecision}
	Let $X\subseteq Y$ be sets and let $\P_y$ for $y\in Y$ be posets. Let $\P_Y$ be the product
	of the $\P_y$ (using whatever support you like) and let $\P_X$ be the subposet corresponding to 
	the coordinates	in $X$. Let $\chi$ be a $\P_Y$-name such that 
	$\P_Y\forces \chi\subseteq V[\Gdot \rest{X}]$ but $\P_Y\not\forces\chi\in V[\Gdot\rest{X}]$.
	Fix a condition $q\in\P_Y$ such that $q\forces \chi\notin V[\Gdot\rest{X}]$. Then there is a
	$\P_X$-name $\rho$ such that no $X$-extension of $q$ decides $\rho\in\chi$.
\end{lemma}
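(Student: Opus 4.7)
The plan is to proceed by contradiction: assume that for every $\P_X$-name $\rho$ some $X$-extension of $q$ decides $\rho \in \chi$, and derive a contradiction with $q \forces \chi \notin V[\Gdot\rest{X}]$. The argument rests on the special case $X = \emptyset$, which is classical: if $\chi$ is a $\Q$-name with $\Q \forces \chi \subseteq V$ and $q \forces \chi \notin V$, then some $x \in V$ is not decided by $q$ to be in $\chi$. Indeed, otherwise $A = \set{x}{q \forces \check x \in \chi}$ would lie in $V$ (a set, after bounding rank by that of $\chi$) and $q$ would force $\chi = \check A$, contradicting $\chi \notin V$.

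To apply this to the general case, I would factor $\P_Y = \P_X \times \P_{Y\setminus X}$, write $q = (q_X, q_{Y\setminus X})$, and regard $\chi$ as a $\P_X$-name $\tilde\chi$ for a $\P_{Y\setminus X}$-name over $V[\Gdot\rest{X}]$. The two hypotheses translate, via the product forcing theorem, to the statement that $q_X$ forces (in $\P_X$ over $V$) that, in $V[\Gdot\rest{X}]$, the condition $q_{Y\setminus X}$ forces $\tilde\chi$ to be a subset of $V[\Gdot\rest{X}]$ that $q_{Y\setminus X}$ does not force to lie in $V[\Gdot\rest{X}]$. Inside any $V[G_X]$ with $q_X \in G_X$, applying the special case to $\P_{Y\setminus X}$ and $\tilde\chi^{G_X}$ yields some $\rho \in V[G_X]$ for which $q_{Y\setminus X}$ does not decide $\check\rho \in \tilde\chi^{G_X}$. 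By the maximal principle, these per-generic witnesses consolidate into a single $\P_X$-name $\dot\rho^*$ in $V$ such that $q_X$ forces ``$q_{Y\setminus X}$ does not decide $\check{\dot\rho^*} \in \tilde\chi$''.

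The final verification is a one-step unwinding via the product forcing theorem: if some $X$-extension $(p, q_{Y\setminus X}) \leq_X q$ decided $\dot\rho^* \in \chi$, say forcing $\dot\rho^* \in \chi$, then $p$ would force in $\P_X$ that $q_{Y\setminus X}$ forces $\check{\dot\rho^*} \in \tilde\chi$; since $p \leq q_X$, this directly contradicts the defining property of $\dot\rho^*$, and the case of $(p, q_{Y\setminus X}) \forces \dot\rho^* \notin \chi$ is symmetric. The main obstacle is the consolidation step: the simple-case argument produces a witness in each individual $V[G_X]$, but the conclusion of the lemma demands a single $\P_X$-name in $V$ whose behaviour is uniform across all generics extending $q_X$. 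Bridging this gap is precisely what the maximal principle in $V$'s $\P_X$-forcing accomplishes.
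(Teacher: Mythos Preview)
Your argument is correct and follows essentially the same route as the paper: reduce to the case $X=\varnothing$, then apply that case inside $V^{\P_X}$ (you make the use of the maximal principle explicit where the paper simply says ``applying the previous case in $V^{\P_X}$''), and finally unwind via the product factorization to see that no $X$-extension of $q$ decides $\rho\in\chi$. One cosmetic remark: your opening sentence announces a global proof by contradiction, but the argument you actually give is direct---only the $X=\varnothing$ base case is handled contrapositively.
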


\begin{proof}
	First consider the case $X=\varnothing$. So suppose that for every $x$ the condition $q$ decides
	$\check{x}\in \chi$. Let $S=\set{x}{q\forces\check{x}\in\chi}$. Clearly $S$ is a ground-model
	set and $q\forces\chi=\check{S}$, which contradicts our assumption.
	
	In the general case, observe that
	\[q\rest X\forces (q\rest{Y\setminus X}\forces \chi\notin V[\Gdot\rest X])\]
	Applying the previous case in $V^{\P_X}$, we can find a $\P_X$-name $\rho$ such that
	\[q\rest X\forces(\textup{$q\rest{Y\setminus X}$ does not decide $\rho\in \chi$})\]
	We claim that this $\rho$ is as desired. So fix some $r\leq_X q$.
	Then it follows from the above that
	\[r\rest X\forces (\textup{$q\rest{Y\setminus X}$ does not decide $\rho\in\chi$})\]
	but, since $r\rest{Y\setminus X}=q\rest{Y\setminus X}$, we in fact get
	\[r\rest X\forces (\textup{$r\rest{Y\setminus X}$ does not decide $\rho\in\chi$})\]
	which implies that $r$ does not decide $\rho\in\chi$, as desired.
\end{proof}

\begin{proof}[Proof of \autoref{thm:GeneralBlockchain}]
\newcommand{\Abar}{\overline{\mathcal{A}}}
	We proceed with a blockchain argument very much like in the proof of \autoref{thm:Mostowski}.
	Some extra care is required since we can no longer terminate blocks by putting $1$s across
	an entire row, as this would prevent any infinite collection of the $c_i$ from being
	mutually generic. Instead we will work with blocks of finite width, and it is precisely the
	fact that $\mathcal{A}$ is defined by finite obstacles that will allow this approach to 
	succeed.
	
	Let $\mathcal{B}$ be the set of finite obstacles used to define $\mathcal{A}$. Fix a catastrophic real $z$ for $M$. We will be filling in an $\omega$-by-$I$
	matrix with $0$s and $1$s, and its columns will grow into the desired Cohen reals $c_i$.
	We will ensure at each stage of our construction that what we have constructed so far 
	is a condition in $\Add(\omega,I)$, in effect using finite support in the product. 
	Moreover, we will make sure that this condition is \emph{uniform}, in the sense that all of its 
	nonempty columns will have the same height.	Our construction will
	again be guided by some enumeration of all the relevant requirements in order type $\omega$. We will, moreover,
	assume that each requirement is listed infinitely often.
	At each stage the enumeration will give us either
	\begin{enumerate}
		\item an $A\in\mathcal{A}$ and an open dense subset of $\Add(\omega,A)$,
		\item a $B\in\mathcal{B}$, or
		\item a pair of sets $A,A' \in \mathcal{A}$,
		together with an $\Add(\omega,A)$-name $\sigma\in M$ and
		an $\Add(\omega,A')$-name $\tau\in M$ for subsets of $M[\Gdot\rest{A\cap A'}]$.
	\end{enumerate}
	Now suppose that we are at some stage of our construction, $p$ is the entire condition in
	$\Add(\omega,I)$ we have constructed so far, and we are given an open dense subset
	$D\subseteq\Add(\omega,A)$ for some $A\in\mathcal{A}$. In this case we simply
	extend the columns of $p$ in $A$ to meet the dense set $D$ and then pad with $0$s, 
	if necessary, to obtain a uniform condition.
	
	If we are given an obstacle $B\in\mathcal{B}$, we extend the columns of $p$ in $B$ by appending a row of $1$s and then the 
	bit $z(n)$, where $n$ is the number of times
	that $B$ has occurred in the enumeration before this step.
	Note that the result is still a condition in $\Add(\omega,I)$ since $B$ is finite.
	At the end we pad with $0$s to make the condition uniform, if necessary.
	
	Finally, suppose we are given a pair of names $\sigma$ and $\tau$ as above. If
	$p\rest A\forces \sigma\in M[\Gdot\rest{A\cap A'}]$ we do nothing in this step and proceed with
	the rest of the construction. If this is not the case, we will attempt to find an extension $q$ of
	$p$ that forces $\sigma\neq\tau$.
	Given our assumption, we can find some $p'\leq_A p$ such that
	$p'\rest{A}\forces \sigma\notin M[\Gdot\rest{A\cap A'}]$. Now apply \autoref{lemma:ProductDecision}
	with $X=A\cap A', Y=A, q=p'\rest A,$ and $\chi=\sigma$ to get a name $\rho$ such that no
	$(A\cap A')$-extension of $p'\rest{A}$ decides $\rho\in\sigma$. 
	Pick an $A$-extension of $p'$ that decides $\rho\in\sigma$ and pad it with $0$s
	to make it uniform; let $p''$ be the resulting condition. Now consider two cases:
	\begin{enumerate}
		\item If $p''\rest{A'}$ does not decide $\rho\in\tau$ the same way as $p''\rest A$
		decides $\rho\in\sigma$, we can find a $q\leq_{A'} p''$ such that $q\rest{A'}$ decides
		it the opposite way. In this case we pad $q$ with $0$s to make it uniform, and the resulting
		condition is the next step in our construction.
		\item If $p''\rest{A'}$ decides $\rho\in\tau$ the same way as $p''\rest{A}$ decides
		$\rho\in\sigma$, observe that $p'\rest{A} \cup p''\rest{A\cap A'}$ is an
		$(A\cap A')$-extension of $p'\rest{A}$, so, by our assumption, it does not decide
		$\rho\in\sigma$. Therefore there must be some $q\leq_A p'\cup(p''\rest{A'})$
		so that $q\rest{A}$ decides $\rho\in\sigma$ differently than
		$q\rest{A'}\leq p''\rest{A'}$ decides $\rho\in\tau$. We take this $q$ to be the next
		step in our construction, after possibly padding with $0$s to make it uniform.
	\end{enumerate}
	Note that this $q$ is as desired: Since $q$ decides $\rho\in\sigma$ and $\rho\in\tau$
	in opposite ways, it definitely forces $\sigma\neq\tau$.
	
	This finishes the construction of the reals $c_i$. Let us verify that they satisfy all
	of the properties in the theorem. If $A\in\mathcal{A}$ then, in the course of our
	construction, we considered every open dense subset of $\Add(\omega,A)$ in $M$, and we made sure
	that the columns of our matrix in $A$ met these dense sets. Therefore
	$\seq{c_i}{i \in A}$ really is generic for $\Add(\omega,A)$.
	
	If $B\notin \mathcal{A}$, then it must contain an obstacle $B'\in\mathcal{B}$, and we handled this $B'$
	infinitely often during our construction, each time adding a row of $1$s across the columns
	in $B'$ and the next bit of $z$. Furthermore, we made sure in our construction to only
	add a row of $1$s across a set of columns if the set was indeed an obstacle set (in which case we also
	coded a bit of $z$) or if it was covered by an element of $\mathcal{A}$. 
	Since the latter is not the case for $B'$, we can recover $z$ from the reals
	$\set{c_i}{i \in B'}$, and so the family $\set{M[c_i]}{i \in B'}$ is not amalgamable
	and therefore neither is $\set{M[c_i]}{i \in B}$.
	
	Finally, suppose that $A,A'\in\mathcal{A}$ and we want to see that
	$$M[c_i\mid i \in A]\cap M[c_i\mid i \in A']\subseteq M[c_i\mid i \in A\cap A']$$
	Let $x$ be an element of this intersection; we may assume by $\in$-induction that
	$x\subseteq M[c_i\mid i \in A\cap A']$. Pick an $\Add(\omega,A)$-name $\sigma$ and an 
	$\Add(\omega,A')$-name $\tau$ for $x$. But we were handed this pair of names at some
	stage of our construction. At that point it must have been the case that the condition we had
	forced $\sigma\in M[\Gdot\rest{A\cap A'}]$, since otherwise we would have made sure that
	$\sigma$ and $\tau$ would be interpreted as different sets. But this means that
	$x\in M[c_i\mid i\in A\cap A']$, as required.
\end{proof}

\begin{theorem}
\label{thm:EmbedFamilyOfSetsIntoCohenMultiverse}
	Let $I$ and $\mathcal{A}$ be as in \autoref{thm:GeneralBlockchain}. Then $(\mathcal{A},\subseteq)$
	$*$-embeds into the generic multiverse given by posets of the form 
	$\Add(\omega,X)$ for a set $X$ (and mapping $\varnothing$ to $M$).
\end{theorem}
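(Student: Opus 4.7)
The plan is to apply \autoref{thm:GeneralBlockchain} to the family $\mathcal{A}$ on $I$, obtaining Cohen reals $\seq{c_i}{i \in I}$ satisfying clauses (1)--(3) of that theorem, and then to define $f$ by $f(\varnothing) = M$ and $f(A) = M[c_i \mid i \in A]$ for $A \neq \varnothing$. I will interpret the target as the sub-multiverse of Cohen extensions of $M$ (that is, extensions by $\Add(\omega,X)$ for some set $X$), ordered by inclusion with $M$ as the designated zero, so that $f$ really does land in the target by clause (1). Each of the three $*$-embedding conditions will then fall out of exactly one of the properties (1)--(3).

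Order-preservation is immediate. For order-reflection, which also yields injectivity, I would argue contrapositively: suppose $f(A) \subseteq f(A')$ but $A \not\subseteq A'$, and pick some $i \in A \setminus A'$. Property (3) gives $f(A) = f(A) \cap f(A') = f(A \cap A')$, so $c_i$ lies in $M[c_j \mid j \in A \cap A']$. But by the mutual genericity in property (1), $c_i$ is Cohen-generic over $M[c_j \mid j \in A \setminus \{i\}]$ and hence \emph{a fortiori} over the smaller model $M[c_j \mid j \in A \cap A']$, which therefore cannot contain it.

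For the upper bound clause, the forward direction takes $f(\bigcup_j A_j)$ as the amalgam whenever $\bigcup_j A_j \in \mathcal{A}$. For the converse, if $\bigcup_j A_j \notin \mathcal{A}$ then it contains some finite obstacle $B$; any upper bound of $\set{f(A_j)}{j}$ in the target would contain $M[c_i]$ for each $i \in B$, since every such $i$ sits in some $A_j$. This contradicts property (2), which asserts nonamalgamability of $\set{M[c_i]}{i \in B}$ in the entire generic multiverse.

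For nonzero lower bounds, the forward direction is easy: any $i \in \bigcap_j A_j$ supplies $f(\{i\}) = M[c_i] \neq M$ as a nonzero lower bound, using that all singletons lie in $\mathcal{A}$. Conversely, I would extend property (3) to finite intersections by a short induction (legitimate because $\mathcal{A}$ is closed under subsets, keeping every intermediate intersection in $\mathcal{A}$), obtaining $\bigcap_j f(A_j) = f(\bigcap_j A_j) = M$ when $\bigcap_j A_j = \varnothing$; any lower bound $N$ in the target then satisfies $M \subseteq N \subseteq M$ and hence equals $M$, the zero. The main subtlety is specifying the target poset carefully so that ``nonzero lower bound'' has the intended meaning---namely, the Cohen sub-multiverse with $M$ as the designated zero; once that is settled, the argument is essentially a direct translation of the three properties furnished by \autoref{thm:GeneralBlockchain}.
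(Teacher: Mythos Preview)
Your proposal is correct and follows essentially the same approach as the paper: apply \autoref{thm:GeneralBlockchain}, define $f(A)=M[c_i\mid i\in A]$, and verify the $*$-embedding conditions from properties (1)--(3). The only minor difference is that for order-reflection you invoke property~(3) and mutual genericity within $A$, whereas the paper first uses property~(2) to conclude $A\cup A'\in\mathcal{A}$ and then applies mutual genericity over $A\cup A'$; both routes work equally well.
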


\begin{proof}
\newcommand{\Mext}[1]{M[c_i\mid i\in #1]}
	Let $\set{c_i}{i\in I}$ be the Cohen reals given by \autoref{thm:GeneralBlockchain}.
	We define a map on $\mathcal{A}$ by sending $A\in\mathcal{A}$ to
	$\Mext{A}$. This map is clearly increasing. Let us check that it is also a $*$-embedding.
	
	Suppose that $\Mext{A}\subseteq \Mext{A'}$ for some $A,A'\in\mathcal{A}$. It follows
	that the family $\set{M[c_i]}{i\in A\cup A'}$ is amalgamable, so $A\cup A'\in\mathcal{A}$.
	In particular, $\seq{c_i}{i\in A\cup A'}$ are mutually generic over $M$.
	Now suppose that there is some $\ell\in A\setminus A'$. Then $c_\ell$ would have to be
	generic over $\Mext{A'}\supseteq\Mext{A}$, which is impossible. Therefore $A\subseteq A'$.
	
	Now suppose that $X$ is a finite subset of $\mathcal{A}$ and that
	$\set{\Mext{A}}{A\in X}$ is amalgamable. It follows as above that $\bigcup X\in\mathcal{A}$,
	so $X$ does have an upper bound in $\mathcal{A}$.
	
	Finally, with $X$ as above, suppose that $\set{\Mext{A}}{A\in X}$ has a lower bound
	in the generic multiverse strictly above $M$. But recall that one of the properties of
	the Cohen reals constructed in \autoref{thm:GeneralBlockchain} is that
	$\bigcap_{A\in X}\Mext{A}=\Mext{\bigcap X}$. It follows that $\bigcap X$ is nonempty,
	and so it is a nonzero lower bound for $X$ in $\mathcal{A}$.
\end{proof}

The theorem is actually slightly stronger than stated, since we obtain nonamalgamability
in the entire generic multiverse and not just in the restricted multiverse given by adding Cohen 
reals. On a related note, we can tweak the real $z$ that we coded between the generics to
achieve different kinds of nonamalgamability. For example, we could take $z$ to be a sufficiently 
generic random real over $M$, in which case the models given will be nonamalgamable in the
Cohen multiverse, but amalgamable in, say, the \textsc{ccc}-multiverse.

\begin{corollary}
\label{cor:FinitePosetsEmbed}
	Any finite poset with a least element $*$-embeds into the generic multiverse. Moreover,
	any finite meet-semilattice $*$-embeds as such into the generic multiverse.
\end{corollary}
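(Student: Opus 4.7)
The plan is to reduce to \autoref{thm:EmbedFamilyOfSetsIntoCohenMultiverse}. Given a finite poset $P$ with least element $0_P$, I would produce an auxiliary family $\mathcal{A}$ of subsets of a base set $I$, defined by finite obstacles in $M$, together with a $*$-embedding $f \colon P \to (\mathcal{A},\subseteq)$; composing $f$ with the embedding supplied by the theorem then $*$-embeds $P$ into the generic multiverse.

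Concretely, I would take $I := P \setminus \{0_P\}$ and define the punctured downset
\[
f(p) := \{ r \in I \mid r \leq p \},
\]
so that $f(0_P) = \varnothing$ and, for $p > 0_P$, the element $p$ itself lies in $f(p)$. Let $\mathcal{A}$ be the collection of subsets of $I$ that are contained in some $f(u)$ with $u \in P$. Then $\mathcal{A}$ is closed under subsets and, as $I$ is finite, all its obstacles are finite, so $\mathcal{A}$ satisfies the hypothesis of \autoref{thm:EmbedFamilyOfSetsIntoCohenMultiverse}. That $f$ is an injective order embedding preserving the least element is immediate from $p \in f(p)$. For the bound conditions, a finite $X \subseteq P$ has an upper bound $u$ in $P$ precisely when $\bigcup_{p \in X} f(p) \subseteq f(u)$, i.e.\ when $\bigcup_{p \in X} f(p) \in \mathcal{A}$, which (by closure under subsets) is the existence of an upper bound of $f[X]$ in $\mathcal{A}$. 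For nonzero lower bounds, one observes that
\[
\bigcap_{p \in X} f(p) = \{ r \in I \mid r \leq p \text{ for all } p \in X\}
\]
is nonempty exactly when $X$ has a nonzero lower bound in $P$, in which case any such $r$ gives a singleton in $\mathcal{A}$ that is a nonzero lower bound for $f[X]$.

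For the moreover, if $P$ is a finite meet-semilattice then it automatically has a least element, and one checks directly that $f(p \wedge q) = f(p) \cap f(q)$, so $f$ preserves binary meets. By property~(3) of \autoref{thm:GeneralBlockchain}, the intersection of two models $M[c_i \mid i \in A]$ and $M[c_i \mid i \in A']$ coincides with $M[c_i \mid i \in A \cap A']$, which exhibits this intersection as the genuine meet in the $\subseteq$-ordered generic multiverse; hence the composite embedding preserves meets as well.

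The only point requiring care, rather than a real obstacle, is the handling of the bottom element: using the unpunctured downset $\{r \in P \mid r \leq p\}$ would map $0_P$ to the nonempty singleton $\{0_P\}$, violating the least-element preservation built into the definition of $*$-embedding. Puncturing at $0_P$ fixes this, and with that minor adjustment in place the verifications of the $*$-embedding axioms and of meet-preservation are routine.
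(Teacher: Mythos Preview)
Your proposal is correct and essentially identical to the paper's proof: both use the punctured-downset map $p \mapsto \{r \mid 0_P < r \leq p\}$, take $\mathcal{A}$ to be the downward closure of its range, and compose with the embedding of \autoref{thm:EmbedFamilyOfSetsIntoCohenMultiverse}. Your write-up simply spells out the routine verifications in more detail, including the appeal to property~(3) of \autoref{thm:GeneralBlockchain} for meet preservation, which the paper leaves implicit.
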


\begin{proof}
	It is easy to check that the composition of $*$-embeddings is itself a $*$-embedding, so,
	in view of \autoref{thm:EmbedFamilyOfSetsIntoCohenMultiverse}, we only need to check
	that any finite poset $P$ $*$-embeds into a family of sets $\mathcal{A}$ as in
	\autoref{thm:GeneralBlockchain}. We can take $\mathcal{A}$ to consist of all sets of the 
	form $p{\downarrow}:=\set{q\in P}{0_P<q\leq p}$ and their subsets, where $p\in P$ and $0_P$ is
	the least element of $P$, and it is straightforward to see that the map $p\mapsto p{\downarrow}$
	is a $*$-embedding.
	If $P$ is in addition a meet-semilattice, we can furthermore observe that both the map
	$p\mapsto p{\downarrow}$ and the $*$-embedding from 
	\autoref{thm:EmbedFamilyOfSetsIntoCohenMultiverse} preserve meets.
\end{proof}

Arguing exactly as in \autoref{cor:ExistsTheoryDecidable}, we can now deduce the decidability
of the existential theory of the generic multiverse augmented with the amalgamability predicates
$A_n$ and a partial meet operation (seen as a ternary predicate).

\begin{corollary}
	\label{cor:ExistsSemilatticeTheoryDecidable}
	Let the predicates $A_n$ be as in \autoref{cor:ExistsTheoryDecidable}.
	Then the $\exists$-theory of the generic multiverse in the language
	$\{\leq,\wedge\}\cup\set{A_n}{0<n<\omega}$ is decidable.
\end{corollary}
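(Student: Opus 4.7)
The argument closely parallels \autoref{cor:ExistsTheoryDecidable}, the new ingredient being the meet predicate. Given an existential sentence in the specified language, put its matrix in disjunctive normal form and decide each disjunct separately; so we may assume we are given a conjunction $\psi(x_0,\ldots,x_m)$ of atoms and negated atoms in $\leq$, the amalgamability predicates $A_n$, and a ternary meet predicate $W(x_i,x_j,x_k)$ reading ``$x_i\wedge x_j = x_k$.''

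The plan is to show that $\psi$ is realizable in the generic multiverse if and only if it is realizable in some finite meet-semilattice $Q$ with a least element, interpreting $\wedge$ as the meet of $Q$ and $A_n$ as ``has an upper bound in $Q$.'' Since the size of such a $Q$ can be bounded computably in terms of $|\psi|$, this equivalence will yield decidability via exhaustive search over finite meet-semilattices and assignments of the variables. The $(\Leftarrow)$ direction is immediate from the ``moreover'' clause of \autoref{cor:FinitePosetsEmbed}, which provides a meet-preserving $*$-embedding of $Q$ into the multiverse; $*$-embeddings additionally preserve $\leq$ and the existence of finite upper and nonzero lower bounds, so all literals of $\psi$ transfer.

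For the $(\Rightarrow)$ direction, given a realization $\vec y$ in the multiverse I would first form a finite poset $S$ consisting of $\{y_0,\ldots,y_m\}$, a formal least element, and one fresh upper-bound witness for each subset $\{y_i : i\in J\}$ asserted by a positive $A_n$-atom of $\psi$ to be amalgamable, with each such witness made incomparable with everything not required to lie below it. I would then complete $S$ to a finite meet-semilattice $Q$ by freely adjoining abstract meets for each finite $X\subseteq S$ lacking a greatest lower bound in $S$. A direct check confirms the key properties: for pairs $(y_i,y_j)$ whose multiverse meet $y_k$ exists, $y_k$ is already the greatest common lower bound in $S$ and so remains the meet in $Q$; and no freshly adjoined element can serve as an upper bound for a forbidden subset, since these elements are constructed as lower bounds.

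The principal obstacle is this combinatorial verification in the $(\Rightarrow)$ direction, in particular ensuring that the meet-completion of $S$ does not inadvertently create new upper bounds for subsets meant to be nonamalgamable, and does not identify an adjoined meet with some $y_\ell$ forbidden by a negative meet atom. Once this equivalence is in place, decidability follows exactly as in the proof of \autoref{cor:ExistsTheoryDecidable}, by enumerating finite meet-semilattices up to the computable size bound and checking each for a satisfying assignment.
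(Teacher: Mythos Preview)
Your approach matches the paper's, which itself offers no more than the remark ``arguing exactly as in \autoref{cor:ExistsTheoryDecidable}'': reduce to satisfiability in a finite meet-semilattice and then invoke the meet-preserving $*$-embedding from \autoref{cor:FinitePosetsEmbed}. Your sketch is in fact considerably more detailed than what the paper provides.

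There is, however, a gap in your $(\Rightarrow)$ construction that is not the obstacle you name. You worry that an \emph{adjoined} meet might be identified with some $y_\ell$, but the real danger is that an \emph{existing} $y_k\in S$ becomes the meet of $y_i,y_j$ in $Q$ even though $y_i\wedge y_j\neq y_k$ in the multiverse. Concretely: suppose the multiverse satisfies $\neg W(y_0,y_1,y_2)$ with $y_2$ a common lower bound of $y_0,y_1$, but the true infimum is some $z\notin\{y_0,\dots,y_m\}$ strictly above $y_2$ (or no infimum exists, but there is a common lower bound $z'$ incomparable with $y_2$). Since neither $z$ nor $z'$ lies in your $S$, the element $y_2$ may well be the \emph{unique} maximal common lower bound of $y_0,y_1$ in $S$; then no new meet is adjoined for this pair, and $Q$ satisfies $y_0\wedge y_1=y_2$, falsifying the negative literal. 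The fix is to enrich $S$ before completing: for each negative meet literal $\neg W(x_i,x_j,x_k)$ realized by $\vec y$ with $y_k$ a common lower bound of $y_i,y_j$, throw into $S$ a multiverse witness refuting its maximality (a strictly larger common lower bound, or an incomparable one). With these finitely many extra points in place your meet-completion argument goes through as written.
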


\section{Generalizing to Wide Forcing}
\label{sec:WideForcing}

The blockchain arguments given in the previous section clearly generalize to other forcing notions 
where conditions are some kind of bounded sequence, such as $\Add(\kappa,1)$ or 
$\Coll(\kappa,\lambda)$.
The next question we are interested in is whether the nonamalgamability phenomenon (and various 
$*$-embeddings) can be realized using an even larger variety of forcing notions, and even possibly 
while using several nonisomorphic posets at once.

The obvious obstacle to our grand ambition is that there are always pairs of posets such that
any two generic extensions of $M$ by these posets amalgamate: just take $\Add(\omega,1)$ and
$\Add(\omega_1,1)$, for example. Retreating a bit to just consider one poset at a time, some
forcing notions $\P$ exhibit \emph{automatic mutual genericity}, meaning that any two distinct
generic filters for $\P$ are mutually generic. Any two extensions by such forcing are therefore amalgamable. An early example of this phenomenon
was Jensen's forcing to add a $\Delta^1_3$ real of minimal degree over $L$ 
(see~\cite{Jensen1970:DefinableSetsOfMinimalDegree}). Slightly more generally, the second author
showed in~\cite{Hamkins2016:UpwardClosureAndAmalgamationInGenericMultiverse} that, under
$\diamondsuit$, there is a Suslin tree exhibiting automatic mutual genericity.
The question whether posets exhibiting automatic mutual genericity always
exist is open
%, although Ben-Neria and Chen managed to show that the Suslin tree used by the second author is essentially the only \textsc{ccc} example 
(see \cite{Hamkins2015.MO222602:Is-it-consistent-with-ZFC-that-no-nontrivial-forcing-has-automatic-mutual-genericity?}).
%MH: Should we work out this argument? I don't think it was ever published, but I can ask Omer and Bill. 
% I added a reference to the MathOverflow question where I had asked this, and Bill Chen had posted an answer there. -JDH
%MH: I removed the claim. I spent a lot of time with Bill and Omer trying to reconstruct the proof and it turns out to be false.

Skirting these counterexamples, the second author introduced the notion of a wide poset and proved the nonamalgamation theorem for wide forcing.

\begin{definition}[\cite{Hamkins2016:UpwardClosureAndAmalgamationInGenericMultiverse}]
	A poset $\P$ is \emph{wide} if there is, for any $p\in\P$, a maximal antichain
	in $\P$ below $p$ of size $|\P|$.
\end{definition}

For example, the Cohen poset $\Add(\omega,1)$ is wide, as are collapse posets 
$\Coll(\kappa,\lambda)$, Sacks forcing, Mathias forcing, etc. It is readily seen that
the class of wide posets is closed under finite products, finite lottery sums, provided
that the factors are all of the same size, and finite iterations (although some care is needed in
the specific presentation of the iteration).

The second author went on to show that, given
posets $\P_0$ and $\P_1$ that are wide and of equal size in $M$, there are generic filters 
$G_0$ and $G_1$
such that the extensions $M[G_0]$ and $M[G_1]$ do not amalgamate. Let us now generalize this argument to prove a version of \autoref{thm:GeneralBlockchain} in the setting of wide posets.

\begin{theorem}
\label{thm:WideAmalgamability}
	Suppose that a family of sets $\mathcal{A}$ is defined in $M$ by finite obstacles on a set $I$ and $\set{\P_i}{i\in I}\in M$ is a family of wide posets in $M$, all of the same size $\kappa\geq|I|$. 	
	Then there are generic filters $G_i\subseteq\P_i$ over $M$ with the following properties:
	\begin{enumerate}
		\item \label{i:generic}
		If $A\in\mathcal{A}$ then $\prod_{i\in A}G_i$ is generic for $\prod_{i\in A}\P_i$
		over $M$.
		\item \label{i:nonamalg}
		If $B\in M$, $B\subseteq I$ and $B\notin\mathcal{A}$, then the family $\set{M[G_i]}{i\in B}$ does
		not amalgamate in the generic multiverse.
		\item \label{i:intersect}
		If $A^0,A^1\in \mathcal{A}$ then $M[\prod_{i\in A^0}G_i]\cap M[\prod_{i\in A^1}G_i]=
		M[\prod_{i\in A^0\cap A^1}G_i]$.
	\end{enumerate}
	The products above are to be seen as subposets of the product $\prod_{i\in I}\P_i$ which may use 
	supports from an arbitrary ideal in $M$ extending the finite ideal on $I$.
\end{theorem}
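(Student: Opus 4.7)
I would follow the blockchain argument of \autoref{thm:GeneralBlockchain} very closely, now working with conditions in the product $\prod_{i \in I} \P_i$ with supports in some ideal in $M$ extending the finite ideal on $I$. Fix a catastrophic real $z$ for $M$ and enumerate in order type $\omega$, each type listed infinitely often, the same three kinds of requirements as before: genericity pairs $(A, D)$ with $A \in \mathcal{A}$ and $D \in M$ open dense in $\prod_{i \in A}\P_i$; obstacles $B \in \mathcal{B}$; and intersection quadruples $(A, A', \sigma, \tau)$. Recursively build a descending sequence of conditions $p^n$, modifying at each step only the coordinates in the active set of the requirement being processed, and let $G_i$ be the filter on $\P_i$ generated by $\{p^n(i) : n < \omega\}$. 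Since singletons always lie in $\mathcal{A}$, each $G_i$ will in particular be $M$-generic for $\P_i$.

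The genericity step amounts to extending $p^n \rest A$ into $D$ inside $\prod_{i\in A}\P_i$, and the intersection step transfers verbatim from \autoref{thm:GeneralBlockchain}, since \autoref{lemma:ProductDecision} is already formulated for arbitrary products. The essential new ingredient is the obstacle step. For the $k$-th occurrence of $B \in \mathcal{B}$, I would first impose a \emph{signature}: for each $i \in B$, fix in $M$ a canonical maximal antichain $\mathcal{S}_{i,B,k}\subseteq \P_i$ and strengthen the current $p(i)$ to a common refinement with some element of $\mathcal{S}_{i,B,k}$, which is possible by maximality. Below this signature element, use the wideness of $\P_i$ to canonically pick a further maximal antichain $\mathcal{M}_i = \{a_i^\alpha : \alpha < \kappa\}$ of size $\kappa$, and strengthen $p(i)$ to $a_i^{\alpha_i}$, where the tuple $(\alpha_i)_{i\in B}\in\kappa^{|B|}$ is chosen so that $\sum_{i\in B}\alpha_i\equiv z(k)\pmod{2}$. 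The hypothesis $\kappa \geq |I|$ provides enough canonical room in $M$ to set up nonclashing signatures for every obstacle through $i$ uniformly.

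The main technical obstacle is verifying nonamalgamability (part (\ref{i:nonamalg})). Any model $N$ amalgamating $\{M[G_i]:i\in B\}$ contains the $M$-definable objects $\mathcal{S}_{i,B,k}$ and $\mathcal{M}_i$, so $N$ can compute, for each $k$, the signature element $s \in G_i \cap \mathcal{S}_{i,B,k}$, the canonical antichain $\mathcal{M}_i$ sitting below $s$, and the unique index $\alpha_i$ with $a_i^{\alpha_i}\in G_i$. Then $N$ reads off $z(k) = \sum_{i\in B}\alpha_i \pmod{2}$ for every $k$ and hence contains the catastrophic $z$, a contradiction. The spread encoding via the sum modulo $2$ is essential: from a proper subfamily $(G_i)_{i \in B'}$ with $B'\subsetneq B$, the missing $\alpha_i$ for $i\in B\setminus B'$ can take arbitrary parities by the $M$-genericity of the individual $G_i$, so $z(k)$ is not prematurely leaked, and in particular no single $M[G_i]$ secretly contains $z$, which is essential since $M[G_i]$ is just a forcing extension of $M$ and has the same ordinal height as $M$. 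Parts (\ref{i:generic}) and (\ref{i:intersect}) then follow exactly as in \autoref{thm:GeneralBlockchain}.
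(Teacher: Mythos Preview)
Your obstacle-step coding has a genuine gap. After you strengthen the current $p(i)$ into the signature antichain $\mathcal{S}_{i,B,k}$, you land at some proper refinement $q_i$ below both $p(i)$ and the signature element $s_i$ --- you cannot in general land \emph{at} $s_i$. The coding antichain $\mathcal{M}_i = Z(s_i)$ is maximal below $s_i$, so $q_i$ is compatible with at least one of its members, but possibly with only one; hence you have no freedom to choose the parity of $\alpha_i$, and the sum-mod-$2$ encoding cannot be carried out. More fundamentally: to have full coding freedom you need the $\kappa$-antichain to sit below the \emph{current} condition at the moment of coding, but that condition was produced by the entire external construction (depending on $z$ and the outside enumeration) and is not something a decoder in an amalgamating model can recover from $M$ and the $G_i$ alone. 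Your ``nonclashing signatures'' remark and the hypothesis $\kappa \geq |I|$ do not touch this issue.

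The paper resolves exactly this difficulty by abandoning fixed-in-$M$ coding locations and making the coding self-referential: at every stage one encodes, together with a bit of $z$, the finite tuple of \emph{next} coding points (the conditions $c_m\rest{B^0_{m+1}}$), so that the decoder can trace the construction recursively from a known initial segment. Moreover, the intersection work for a pair $(\sigma,\tau)$ sometimes extends on $A^1$-coordinates and then must partially roll back an earlier $A^0$-extension; the decoder has to be told which branch was taken, and this is why the paper also records marker symbols $\sharp,\flat,\natural$ at the coding points. So your claim that the intersection step ``transfers verbatim'' is only half right: the $\in$-induction and the appeal to \autoref{lemma:ProductDecision} do transfer, but the intersection step is now entangled with the coding, and that entanglement is exactly where the new machinery enters.
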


\begin{proof}
	For $X$ a subset of $I$ we write $\P_X=\prod_{i\in X}\P_i$ and similarly for $G_X$.
	For each $i\in I$ we fix an
	enumeration of $\P_i$ in order type $\kappa$ in $M$. Since the $\P_i$ are wide in $M$,
	we can also find for each $p\in \P_i$ a maximal antichain $Z(p)\subseteq \P_i$ below $p$
	of size $\kappa$, together with an enumeration in order type $\kappa$.
	Finally, fix a catastrophic real $z$ for $M$.
	
	As in the blockchain construction of \autoref{sec:Blockchain} we will undertake a
	construction of a descending sequence of conditions
	$p_n=\seq{p_n^i}{i\in I}\in \prod_{i\in I}\P_i$ whose restriction to each coordinate $i$
	will generate the generic filter $G_i$. We will be guided by some enumeration of all the
	relevant parameters in order type $\omega$. We arrange matters such that at each stage of the 
	construction we are given the following: two sets $A^0,A^1\in\mathcal{A}$, a dense subset
	$D\subseteq \P_{A^0}$ in $M$, and a $\P_{A^0}$-name $\sigma\in M$ and a $\P_{A^1}$-name 
	$\tau\in M$, both for subsets of $M[\Gdot_{A^0\cap A^1}]$ (of course, we want to run through
	all the possible combinations of these parameters).
	
	Before we start the construction, let us briefly describe the idea. The key property of the wide
	posets $\P_i$ will be that, given a condition $p\in \P_i$, a generic $G_i$ containing $p$ 
	picks out a
	unique element of the antichain $Z(p)$, which amounts to, in view of our fixed enumeration, a 
	particular ordinal below $\kappa$. We could, for example, use this mechanism to encode the
	bits of the catastrophic real $z$ as we were building our sequence of conditions $p_n$.
	This approach would be similar to what we did in the proof of \autoref{thm:GeneralBlockchain}.
	However, since the structure of the posets $\P_i$ may be more complicated than that of Cohen
	forcing, we might, along the way, lose track of the \emph{coding points}---those conditions
	whose corresponding antichains we should consult to decode $z$. To remedy this, we will
	keep track of the coding points used at each stage and code not only the catastrophic real $z$
	but also the next set of coding points. In addition, at
	various points in the construction we will want to record one of three additional special symbols
	(say $\sharp$, $\flat$, and $\natural$) which will serve to mark which case of the construction
	we are in. These special symbols will be used in the decoding process to signal which generic
	to look at to recover further information.
	
	We start the construction by letting $p_0^i$ be the top condition of $\P_i$ for each 
	$i\in I$. Now suppose that we have constructed $p_n$ and are given 
	$A_n^0,A_n^1\in\mathcal{A}$, a dense $D\subseteq \P_{A_n^0}$, and names $\sigma$ and $\tau$.
	Let $B_n^0$ and $B_n^1$ be the sets of the first $n$ many elements of $I\setminus A_n^0$
	and $I\setminus A_n^1$, respectively, in some fixed enumeration of $I$ in $V$ in order type
	$\omega$. We also assume that we have, along the way, built a sequence of conditions
	$c_m\in \P_I$ which are the coding points used at stage $m$. To be precise,
	for each $m<n$ and each $i\in B^0_m$ we assume that $p_{m+1}^i\leq c_m^i$ is the 
	condition in the antichain below $c_m^i$ whose index codes: the sets $B^0_{m+1}$ and
	$B^1_{m+1}$, the bit $z(m)$, and the sequence $\seq{c^j_m}{j\in B^0_{m+1}}$
	(note that 	all of these are essentially finite	subsets of $\kappa$, so it makes sense to code
	them together by a single ordinal below $\kappa$). 
	If we additionally make sure that $p^i_{m+1}=c^i_m$ for $i\notin B^0_m$, then by knowing
	$c_m\rest{B^0_m}$ and a generic $G_i$ for some $i\in B^0_m$, we can recover
	$p_{m+1}\rest{B^0_{m+1}}$.
	
	Returning to our construction at stage $n$, it remains for us to build $c_n\leq p_n$
	in such a way that $c_n\rest{B^0_n}$ will be recoverable from $p_n\rest{B^0_n}$ and
	a well-chosen generic.
	First of all, we can find $q_n\leq_{A^0_n} p_n$ such that $q_n\rest{A^0_n}\in D$
	and $q_n$ decides whether $\sigma\in M[\Gdot_{A^0_n\cap A^1_n}]$. We now have a small tree of
	cases to consider.

	\textbf{Case 1:} Suppose that $q_n\forces \sigma\in M[\Gdot_{A^0_n\cap A^1_n}]$. 
	In this case we just let $c_n\leq q_n$ be the 
	extension obtained by coding the symbol $\sharp$ below the conditions on the coordinates
	$i\in B^0_n$ and keeping the other coordinates.
	In particular, since $B^0_n$ is disjoint from $A^0_n$, once we know $p_n\rest{B^0_n}$
	and a generic $G_i$ for some $i\in B^0_n$, we can recover $c_n\rest{B^0_n}$.
	
	\textbf{Case 2:} Suppose that $q_n\forces \sigma\notin M[\Gdot_{A^0_n\cap A^1_n}]$. In this
	case we will attempt to find a condition $c_n$ which forces $\sigma\neq\tau$. 
	We can apply \autoref{lemma:ProductDecision} and get a name $\rho$ for an element of
	$M[\Gdot_{A^0_n\cap A^1_n}]$ such that no 
	$(A^0_n\cap A^1_n)$-extension of $q_n$ decides whether $\rho\in\sigma$. It will be this $\rho$
	that we will try to use to distinguish $\sigma$ and $\tau$ in the extension. 
	We find ourselves in one of two subcases.
	
	\textbf{Case 2a:} Suppose that $q_n$ decides whether $\rho\in\tau$. 
	We then let $q_n'\leq_{A^0_n} q_n$
	be any extension which decides $\rho\in\sigma$ in the opposite way. After this we again
	let $c_n\leq q_n'$ be the extension obtained by coding the symbol $\sharp$
	below the conditions on the coordinates $i\in B^0_n$ and keeping the other coordinates.
	Just as before, knowing $p_n\rest{B^0_n}$ and some $G_i$ for $i\in B^0_n$ allows us
	to recover $c_n\rest{B^0_n}$.
	
	\textbf{Case 2b:} Suppose that $q_n$ does not decide whether $\rho\in\tau$.
	We first find $p'_n\leq_{A^0_n}q_n$ which forces $\rho\in\sigma$ and then define $q'_n\leq p'_n$
	by coding the 	sequence $p'_n\rest{B^1_n}$, together with the symbol $\flat$, below each
	condition $p'^i_n$ for $i\in B^0_n$ and fixing the other coordinates. Observe that, at
	this point, we can recover $q'_n\rest{B^0_n\cup B^1_n}$ from $p_n\rest{B^0_n}$ and some generic 
	$G_i$ for $i\in B^0_n$. But we are not yet finished, and we must ask whether we can commit to
	forcing $\rho \not \in \tau$ or whether we have to roll back some mistakes.
	
	\textbf{Case 2b (i):} Suppose that $q'_n\not\forces \rho\in\tau$. In this case we find $p''_n\leq_{A^1_n} q'_n$ which forces that $\rho\notin\tau$ and
	then let $c_n$ be the condition obtained from $p''_n$ by coding the sequence
	$p''_n\rest{B^0_n}$ below the conditions on the coordinates $i\in B^1_n$ and fixing the other
	coordinates. Note that $c_n\rest{B^0_n}$ can be recovered from $q'_n\rest{B^1_n}$
	and some generic $G_i$ for $i\in B^1_n$. 
	
	\textbf{Case 2b (ii):} Suppose that $q'_n\forces \rho\in\tau$. Recall that we also
	have $q'_n\forces \rho\in\sigma$, so this does not seem to be a good way to go if we
	want to end up forcing $\sigma\neq\tau$. So we are going to try to undo past mistakes
	and work instead with the condition 
	$q''_n=q_n\rest{A^0_n\setminus A^1_n}\cup q'_n \rest{I\setminus(A^0_n\setminus A^1_n)}$.
	The key property of this condition is that it still forces $\rho\in\tau$
	but it does not decide $\rho\in\sigma$ since it is essentially an $(A^0_n\cap A^1_n)$-extension
	of $q_n$. Therefore we can find $p'''_n\leq_{A^0_n}q''_n$ which
	forces $\rho\notin\sigma$ and then let $c_n\leq p'''_n$ be the condition obtained by
	coding the sequence $p'''_n\rest{B^0_n}$, together with the symbol $\natural$ (cancelling out
	the $\flat$ from before), below the conditions on the coordinates
	$i\in B^0_n$ and keeping the other coordinates. Then we can recover $c_n\rest{B^0_n}$
	from $q'_n\rest{B^0_n}$ and some generic $G_i$ for $i\in B^0_n$.
	
	This finishes the construction of $c_n$ and $p_{n+1}$. 
	It still remains for us to show that the generic filters $G_i$ obtained this
	way have the desired properties. Of these, property~(\ref{i:generic}) is easiest, since it is
	clear that we have ensured genericity by running through all the dense sets corresponding
	to any particular $A\in\mathcal{A}$. Property~(\ref{i:intersect}) is also not difficult to see.
	Suppose that we are given an element $x\in M[G_{A^0}]\cap M[G_{A^1}]$ for some 
	$A^0,A^1\in\mathcal{A}$ and we wish to show that $x\in M[G_{A^0\cap A^1}]$. 
	Let $\sigma$ and $\tau$ be the $\P_{A^0}$- and $\P_{A^1}$-names for $x$, respectively. 
	We may assume, by $\in$-induction, that $x$ is in fact a subset of $M[G_{A^0\cap A^1}]$.
	These two names were considered at some stage of the construction, at which time one of two things
	occurred: Either it was forced at that stage that $\sigma\in M[G_{A^0\cap A^1}]$, or we arranged 
	matters so that	it was forced that $\sigma\neq\tau$. But the second option could not have happened,
	since the two names both evaluate to $x$. It therefore follows that $x\in M[G_{A^0\cap A^1}]$.
	
	Finally we turn to property~(\ref{i:nonamalg}). 
	Suppose that we have a set $B\in M$ which does not belong to $\mathcal{A}$ and we
	wish to show that the family $\set{M[G_i]}{i\in B}$ does not amalgamate. Since $\mathcal{A}$ is defined by finite obstacles, it suffices to show this for the finite obstacle sets $B$. We will argue that,
	given the filter $G_B$ and some parameters in $M$, we can recover the catastrophic real $z$,
	which prevents amalgamability. Since $B$ is finite, there is some $n$ so that for all
	larger $m$ we have $B\subseteq A^0_m\cup B^0_m$ and $B\subseteq A^1_m\cup B^1_m$.
	Note that also, since $\mathcal{A}$ is closed under subsets, the intersections $B\cap B^0_m$
	and $B\cap B^1_m$ are nonempty. We may assume that we already know the part of $z$ up to $n$,
	as well as $B^0_n, B^1_n$, and $p_n\rest{B^0_n}$, and we are looking to recover
	$z(n),B^0_{n+1},B^1_{n+1}$, and $p_{n+1}\rest{B^0_{n+1}}$.
	
	Fix indices $i^0\in B\cap B^0_n$ and $i^1\in B\cap B^1_n$. First we check to see what symbol
	is coded at the place where the filter $G_{i^0}$ meets the antichain $Z({p_n^{i^0}})$. 
	If we see a $\sharp$, we know that we were either in Case 1 or 2a at this stage of the 
	construction and we can immediately recover $c_n\rest{B^0_n}$, and this condition,
	together with $G_{i^0}$ again, codes all the information we required. 
	Similarly, if we see a $\flat$, we know that the construction went through Case 2b.
	We should compute $q_n'\rest{B^0_n\cup B^1_n}$, as described above, and see whether the 
	$\flat$ is followed by a $\natural$: if so, we know that
	we proceeded through Case 2b (ii) and we use $G_{i^0}$ to recover $c_n\rest{B^0_n}$,
	and if not, we proceeded through Case 2b (i) and we use $G_{i^1}$ in the same way.
\end{proof}

In the exact same way as we proved \autoref{thm:EmbedFamilyOfSetsIntoCohenMultiverse} using
\autoref{thm:GeneralBlockchain}, we can use \autoref{thm:WideAmalgamability} to prove the following. 

\begin{theorem}
\label{cor:EmbeddingIntoWideMultiverse}
	Let $I,\mathcal{A}$, and the posets $\P_i$ be as in \autoref{thm:WideAmalgamability}.
	Then $(\mathcal{A},\subseteq)$ $*$-embeds into the generic multiverse given by products of the posets
	$\P_i$ (and mapping $\varnothing$ to $M$).
\end{theorem}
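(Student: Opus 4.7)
The plan is to mirror the proof of \autoref{thm:EmbedFamilyOfSetsIntoCohenMultiverse} essentially verbatim, replacing the Cohen reals by the generic filters produced by \autoref{thm:WideAmalgamability}. Let $\seq{G_i}{i\in I}$ be the generic filters guaranteed by that theorem, and define a map $\Phi\colon \mathcal{A}\to$ generic multiverse by sending $A\in\mathcal{A}$ with $A\ne\varnothing$ to $M\bigl[\prod_{i\in A}G_i\bigr]$ and sending $\varnothing$ to $M$. Clause~(\ref{i:generic}) of \autoref{thm:WideAmalgamability} ensures that each $\Phi(A)$ is indeed a forcing extension of $M$ by $\prod_{i\in A}\P_i$, and the map is clearly monotone in $\subseteq$.

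First I would verify the reverse of the order equivalence. Suppose $\Phi(A)\subseteq \Phi(A')$ for some $A,A'\in\mathcal{A}$. Then the family $\set{M[G_i]}{i\in A\cup A'}$ is amalgamable inside $\Phi(A')$, so by clause~(\ref{i:nonamalg}) we must have $A\cup A'\in\mathcal{A}$. In particular the combined filter $\prod_{i\in A\cup A'}G_i$ is generic for $\prod_{i\in A\cup A'}\P_i$ over $M$. If some $i_0\in A\setminus A'$ existed, then $G_{i_0}$ would have to be generic over $M\bigl[\prod_{i\in A'}G_i\bigr]=\Phi(A')$ while simultaneously lying in $\Phi(A)\subseteq\Phi(A')$, which is absurd. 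Hence $A\subseteq A'$, giving injectivity and the order equivalence.

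Next I would handle the amalgamability and lower-bound requirements. For upper bounds, if $X\subseteq\mathcal{A}$ is finite and $\set{\Phi(A)}{A\in X}$ amalgamates in the generic multiverse, then so does $\set{M[G_i]}{i\in\bigcup X}$, and therefore $\bigcup X\in\mathcal{A}$ by clause~(\ref{i:nonamalg}); this is an upper bound of $X$ in $\mathcal{A}$. For nonzero lower bounds, suppose $X\subseteq\mathcal{A}$ is finite and $\set{\Phi(A)}{A\in X}$ has a common lower bound in the multiverse strictly above $M$. By \autoref{lemma:SubsetEqualsForcingExtension} this lower bound is contained in $\bigcap_{A\in X}\Phi(A)$, which by a straightforward induction using clause~(\ref{i:intersect}) equals $\Phi\bigl(\bigcap X\bigr)$. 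Since this intersection properly contains $M$, we must have $\bigcap X\ne\varnothing$, giving a nonzero lower bound of $X$ in $\mathcal{A}$.

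There is essentially no serious obstacle here, since all the heavy lifting has been done in \autoref{thm:WideAmalgamability}; the only step deserving any attention is the passage from the pairwise intersection property~(\ref{i:intersect}) to its finitary analogue, which follows by a direct induction on $|X|$ (at each step applying (\ref{i:intersect}) to $\bigcap_{A\in X'}\Phi(A)=\Phi\bigl(\bigcap X'\bigr)$ and one further $\Phi(A'')$, noting that $\bigcap X'\in\mathcal{A}$ because $\mathcal{A}$ is closed under subsets).
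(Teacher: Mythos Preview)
Your proposal is correct and follows exactly the approach the paper indicates: it explicitly defers to the proof of \autoref{thm:EmbedFamilyOfSetsIntoCohenMultiverse}, and your argument reproduces that proof with the Cohen reals replaced by the generics $G_i$ from \autoref{thm:WideAmalgamability}. The only differences are cosmetic: your invocation of \autoref{lemma:SubsetEqualsForcingExtension} in the lower-bound step is unnecessary (a lower bound $N$ automatically lies in $\bigcap_{A\in X}\Phi(A)$ just by definition of lower bound under inclusion), and you spell out the induction passing from the pairwise intersection property~(\ref{i:intersect}) to its finitary version, which the paper leaves implicit.
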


\section{(Non-)Amalgamability in the Class-Generic Multiverse}
\label{sec:ClassWideForcing}

In this section we turn our attention to the context of different possible collections of classes for a fixed countable model of \ZFC, the original context in which Mostowski \cite{Mostowski1976:ModelsOfBGAxioms} was interested. Specifically, he was interested in the structure
of models of G\"odel--Bernays set theory whose first-order parts are all the same countable transitive model of \ZFC. He investigated the possible patterns of (non-)amalgamability of these models. Just as we generalized his argument in the set forcing case, we can do the same for class forcing. The results of this section are an adaptation of the results about wide forcings in \autoref{sec:WideForcing}.

We will treat models of second-order set theory as two-sorted structures $(M,\mathcal{X},\in^{(M,\mathcal{X})})$ with sets or first-order part $M$ and classes or second-order part $\mathcal{X}$. We will suppress writing the membership relation, referring simply to $(M,\mathcal{X})$. We may always assume that $\mathcal{X}$ is a collection of subsets of $M$. If $M$ is transitive with the true $\in$ as its membership relation, then so is $\mathcal{X}$ and the set--set and set--class membership relations of $(M,\mathcal{X})$ are the true $\in$. 

\begin{definition}
G\"odel--Bernays set theory \GBC with the axiom of Global Choice is the second-order set theory axiomatized by \ZFC for sets, Class Extensionality, Class Replacement, Global Choice (in the form there is a class bijection $\Ord \to V$), and Elementary Comprehension (i.\,e.\ Comprehension for formulae with only set quantifiers (but allowing class parameters)). Dropping Powerset from the axioms for sets gives the theory \GBCm.\footnote{In the absence of Powerset we have that Collection is stronger than Replacement \cite{zarach1996}. We want \GBCm to be axiomatized with Collection, as the version with only Replacement is badly behaved \cite{gitman-hamkins-johnstone2016}. Also, the various forms of Global Choice require Powerset to prove their equivalence. Without Powerset, the existence of a class bijection $\Ord \to V$ is the strongest. We will use this strongest form of Global Choice in the proof of \autoref{thm:WideClassForcing}.}
\end{definition}

To make the argument work we will need a slight strengthening of \GBC in the ground model.

\begin{definition}
Elementary Class Choice \ECC is the schema asserting that if for every set there is a class witnessing some first-order property, then there is a single class coding witnesses for each set. Formally, let $\phi(x,Y,A)$ be a first-order formula, possibly with a set or class parameter $A$. The instance of \ECC for $\phi(x,Y,A)$ asserts that if for every set $x$ there is a class $Y$ such that $\phi(x,Y,A)$ then there is a class $C$ such that for every set $x$ we have that $\phi(x,(C)_x,A)$ holds, where $(C)_x = \{ y : (x,y) \in C \}$ is the $x$-th slice of $C$.
\end{definition}

\begin{theorem}[Gitman--Hamkins \cite{gitman-hamkins2015}\protect\footnotemark]
\footnotetext{This paper has not appeared in publication, but slides can be found online \cite{Gitman2014.webpost:Kelley-Morse-set-theory-and-choice-principles-for-classes}.}
Kelley--Morse set theory\footnote{Recall that Kelley--Morse set theory is axiomatized by the axioms of \GBC plus the full Comprehension schema for all formulae, even those with class quantifiers.} does not prove \ECC. 
\end{theorem}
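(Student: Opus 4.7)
The plan is to apply Gödel's second incompleteness theorem, by showing that \textsc{km} $+$ \ECC proves $\mathrm{Con}(\textsc{km})$. This reduces the task to building, from \ECC, a full second-order satisfaction class for $(V,\mathcal{X})$, whence formalized consistency of \textsc{km} follows by the usual truth-induction argument over formal proofs. Since \textsc{km} is recursively axiomatizable and interprets arithmetic, Gödel's theorem then yields that \textsc{km} cannot itself prove \ECC (assuming \textsc{km} is consistent).

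First I would verify that for each standard $n<\omega$, \textsc{km} proves the existence of a class $T_n$ that serves as a Tarskian satisfaction predicate for $\Sigma^1_n$-formulas in the language of second-order set theory (allowing set and class parameters). By unwinding the Tarskian recursion up to complexity $n$ and invoking $\Sigma^1_n$-Comprehension, such a $T_n$ is definable. The crucial observation is that the statement ``$Y$ is a $\Sigma^1_n$-satisfaction class'' can be written as a \emph{first-order} formula $\phi(n,Y)$ in $Y$ with set parameter $n$: once $n$ is fixed, asserting that $Y$ correctly satisfies Tarski's clauses for every Gödel code of a $\Sigma^1_n$-formula is a condition involving only set quantifiers over codes and assignments, together with references to $Y$ itself. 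Thus the hypothesis of \ECC is met by $\phi(n,Y)$.

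Next I would apply \ECC to $\phi(n,Y)$ to obtain a single class $C$ with $(C)_n$ a $\Sigma^1_n$-satisfaction class for every $n$. From $C$ one assembles a full second-order truth class $T$ by setting $T(\phi,\vec{a},\vec{A})$ to hold precisely when $(\phi,\vec{a},\vec{A})\in (C)_n$, where $n$ is the quantifier complexity of $\phi$ (a recursive function of the Gödel code of $\phi$). Because each $(C)_n$ agrees with $T_n$ on its domain, $T$ satisfies the full Tarskian recursion for every second-order formula. Using $T$, one proves that every instance of every \textsc{km} axiom schema is true -- a finite case analysis through the schemas -- and then proceeds by induction on the length of formal proofs (an ordinary set-theoretic induction on $\omega$, exploiting the compositionality of $T$) to conclude that every \textsc{km}-theorem is true. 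Since $\bot$ is not true, \textsc{km} proves no contradiction, giving $\mathrm{Con}(\textsc{km})$ inside \textsc{km}$+$\ECC.

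The main obstacle is the first-order coding step: one must genuinely express ``$Y$ is a $\Sigma^1_n$-satisfaction class'' as a first-order formula uniformly in the set parameter $n$, without smuggling in class quantifiers. This requires a careful arithmetization of $\Sigma^1_n$-formulas and a uniform-in-$n$ statement of Tarski's recursive conditions (covering the atomic set-membership clause, Boolean connectives, set quantifiers, and class quantifiers up to complexity $n$), all expressed against the single candidate class $Y$. Once this uniformity is secured, \ECC applies directly and the rest of the argument is the standard reflection-to-consistency passage; one only needs to ensure the reflection argument is carried out \emph{inside} \textsc{km}$+\exists T$ rather than in the metatheory, so that the resulting proof of $\mathrm{Con}(\textsc{km})$ is genuinely a \textsc{km}$+$\ECC theorem.
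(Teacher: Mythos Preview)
The paper does not give a proof of this theorem; it is stated as a cited result of Gitman and Hamkins, with a reference to unpublished work and slides. So there is nothing in the paper to compare your argument against, and I will simply assess the proposal on its own.

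Your overall strategy---show that $\textsc{km}+\ECC$ proves $\mathrm{Con}(\textsc{km})$ and then invoke G\"odel's second incompleteness theorem---is a natural one, and versions of it are how such results are proved for stronger class-choice principles. However, there is a genuine gap at precisely the point you flag as the ``main obstacle'', and your sketch does not close it. The Tarski clause governing a class existential quantifier reads
\[
(\ulcorner \exists X\,\psi\urcorner,\vec a)\in Y \quad\Longleftrightarrow\quad \exists X\ (\ulcorner\psi\urcorner,\vec a{}^\frown X)\in Y,
\]
and the right-hand side carries an irreducible second-order quantifier over arbitrary classes $X$. Asserting that $Y$ satisfies this biconditional for all codes and assignments is therefore a genuinely $\Pi^1_1$ condition on $Y$, not a first-order one. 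Your claim that the condition ``involves only set quantifiers over codes and assignments, together with references to $Y$ itself'' is exactly what fails: the clause for $\exists X$ refers not just to $Y$ but to an arbitrary class $X$ outside $Y$. Since \ECC applies only when $\phi(x,Y,A)$ is first-order, you cannot invoke it on this property as stated.

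This is not merely a missing detail. If you try to repair it by replacing your $\phi$ with something that \emph{is} first-order---for instance, ``$Y$ codes a length-$n$ tower of iterated \emph{first-order} truth predicates, each for the structure $(V,\in)$ expanded by the earlier ones''---then \ECC applies and yields an $\omega$-iterated first-order truth predicate as a single class; but it is then a separate and nontrivial argument that such an object suffices to recover full $\Sigma^1_n$-truth uniformly in $n$, or otherwise to prove $\mathrm{Con}(\textsc{km})$ internally. Without that further argument (or some other workaround, such as passing through the unrolled first-order companion model and verifying enough Collection there from \ECC), the proof is incomplete.
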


\begin{theorem}[{Williams \cite[Corollary 2.48]{williams-diss}}]
If $(M,\mathcal{X})$ is a model of \GBCm $+$ \ETR then there is $\mathcal{Y} \subseteq \mathcal{X}$ second-order definable such that $(M,\mathcal{Y})$ is a model of \GBCm $+$ \ETR $+$ \ECC.\footnote{See \cite{gitman-hamkins2016} for a definition of Elementary Transfinite Recursion \ETR, originally due to Fujimoto \cite{fujimoto2012}. The theory \GBC $+$ \ETR is stronger than \GBC in consistency strength, but weaker than Kelley--Morse set theory and indeed weaker than \GBC $+$ $\Pi^1_1$-Comprehension.}
As a consequence, the following pairs of theories are equiconsistent:
\begin{itemize}
\item \GBC $+$ \ETR and \GBC $+$ \ETR $+$ \ECC.
\item \GBCm $+$ \ETR and \GBCm $+$ \ETR $+$ \ECC.
\end{itemize}
\end{theorem}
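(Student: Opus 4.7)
Plan: The strategy is to construct $\mathcal{Y}$ as the collection of classes first-order definable from a single sufficiently strong class parameter $P \in \mathcal{X}$. Using Global Choice and \ETR in $(M,\mathcal{X})$, I would let $P$ bundle together (a) a class bijection $F : \Ord \to V$ witnessing Global Choice and (b) an $\Ord$-iterated Tarskian truth predicate $T$ for $(M, \in, F)$, whose existence is guaranteed by \ETR. Define
\[\mathcal{Y} = \{Y \subseteq M : Y \text{ is first-order definable over } (M, \in, P) \text{ with set parameters}\}.\]
Then $\mathcal{Y} \subseteq \mathcal{X}$, since each such $Y$ is furnished by Elementary Comprehension applied in $\mathcal{X}$, and $\mathcal{Y}$ is second-order definable in $(M,\mathcal{X})$ with $P$ as a parameter.

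The verification that $(M,\mathcal{Y}) \models \GBCm$ is largely routine. Global Choice is witnessed by the $F$-component of $P$, Class Replacement reduces to \ZFC in $M$, and Elementary Comprehension holds because any first-order formula with class parameters $\vec{B}$ from $\mathcal{Y}$ can be unfolded by substituting each $B_i = \{z : \psi_i(z, \vec{s}_i, P)\}$, yielding a first-order formula over $(M, \in, P)$. For \ETR in $(M, \mathcal{Y})$, the solution to any elementary recursion along a class well-order $\Gamma \in \mathcal{Y}$ is provided by \ETR in $\mathcal{X}$ and, by the design of $T$ as an iterated truth predicate, is itself first-order definable from $P$ and hence lies in $\mathcal{Y}$.

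The crux is verifying \ECC. Suppose $\phi(x, Y, A)$ is first-order, $A \in \mathcal{Y}$, and for every $x \in M$ there exists $Y \in \mathcal{Y}$ with $\phi(x, Y, A)$. Each candidate $Y$ is coded by a pair $(\psi, \vec{s})$ with $\psi$ a G\"odel number of a first-order formula and $Y = \{z : \psi(z, \vec{s}, P)\}$. The assertion $\phi(x, \{z : \psi(z, \vec{s}, P)\}, A)$, once we quantify over $\psi$, requires evaluating satisfaction of $\psi$ inside a larger first-order formula, which is made possible precisely by the truth-predicate component of $P$; it therefore unfolds to a single first-order formula $\Phi(x, \psi, \vec{s}, P)$. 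Using the well-order supplied by $F$, for each $x$ let $(\psi_x, \vec{s}_x)$ be the $F$-least pair satisfying $\Phi$. Then
\[C = \{(x, z) : \psi_x(z, \vec{s}_x, P)\}\]
is first-order definable over $(M, \in, P)$, with $T$ interpreting the occurrences of the varying $\psi_x$, and so $C \in \mathcal{Y}$; its slices $(C)_x$ furnish the required uniform witnesses. The equiconsistency corollary then follows by observing that only the class collection is altered, so Powerset on sets is preserved, giving the \GBC case.

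The main obstacle lies in calibrating $P$ correctly. On the one hand $T$ must be strong enough that (i) every \ETR-solution produced over $(M,\mathcal{Y})$ and (ii) every first-order assertion about unfolded class parameters from $\mathcal{Y}$ reduce to first-order statements about $P$; on the other, $T$ must itself be a class in $\mathcal{X}$, so one cannot simply adjoin the full class-language truth predicate (which, by the cited Gitman--Hamkins theorem, would exceed the consistency strength of \GBCm $+$ \ETR). The $\Ord$-iterated first-order truth predicate delivered by \ETR is precisely the right level, and the technical heart of the proof is checking that its closure conditions suffice to handle all internal definability inside $(M, \mathcal{Y})$.
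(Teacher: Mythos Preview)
The paper does not contain a proof of this theorem: it is stated purely as a cited result from Williams' dissertation, with no argument or sketch given. There is therefore no proof in the paper to compare your proposal against.

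That said, your outline follows the standard strategy and is broadly correct, but the specific choice of an $\Ord$-iterated truth predicate is likely too weak. To verify \ETR in $(M,\mathcal{Y})$ you must show that the solution to every elementary recursion along every class well-order $\Gamma \in \mathcal{Y}$ lies in $\mathcal{Y}$. But $\mathcal{Y}$ already contains first-order definable well-orders such as $\Ord + \Ord$, $\Ord \cdot \omega$, $\Ord^2$, and so on, and an $\Ord$-length iteration of truth does not obviously absorb recursions of these longer lengths; on its face your $P$ only yields $\ETR_{\Ord}$ in $\mathcal{Y}$, which is strictly weaker than full \ETR. You correctly identify this calibration as the crux, but the resolution is not merely technical: one needs either to iterate the truth predicate along a well-order that is in a suitable sense closed (so that every well-order definable from $P$ embeds back into it), or to take $\mathcal{Y}$ to be a minimal \ETR-closed subcollection of $\mathcal{X}$ and argue separately that it is second-order definable. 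Either fix is standard, but your sketch as written stops short of it.
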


Next we need a definition of wide forcing specialized to the context of class forcing.

\begin{definition}
A class forcing notion $\P$ is {\em $\Ord$-cc} (synonymously, {\em has the $\Ord$-chain condition}) if every antichain of $\P$ is set-sized.
\end{definition}

\begin{definition}
A class forcing notion $\P$ is {\em $\Ord$-wide} if for every condition $p \in \P$ we have that $\P$ restricted below $p$ is not $\Ord$-cc. That is, below any condition in $\P$ there is a proper-class-sized antichain.
\end{definition}

For the generic multiverse of a model of second-order set theory we want to confine the collection of forcing notions we allow to only the pretame forcings.\footnote{See \cite{friedman:book} for a definition of pretameness.}
The reason is two-fold. First, pretameness is equivalent to the preservation of \GBCm \cite{stanley1984,friedman:book}. Indeed, if a class forcing is not pretame then any forcing extension by it will fail to satisfy Replacement \cite{HKS2018}. This is problematic for nonamalgamability arguments, as it is Replacement which allows us to argue that a forcing extension of $M$ cannot contain a catastrophic real for $M$. Second, \GBC is not strong enough to prove that every nonpretame forcing notion admits a forcing relation \cite{HKLNS2016}. With this in mind, say that the \emph{class-generic multiverse} of a countable transitive model of second-order set theory $(M,\mathcal{X})$ is the smallest collection of models containing $(M,\mathcal{X})$ and closed under grounds and extensions by pretame class forcings. Similar to the set-forcing case, a family of universes in the generic multiverse amalgamates if there is some universe in the generic multiverse which contains all of them.

Given a class forcing notion $\P \in \mathcal{X}$ and a filter $G \subseteq \P$ generic over $(M,\mathcal X)$, meaning that $G$ meets every dense subclass of $\P$ in $\mathcal{X}$, we will write $(M,\mathcal{X})[G]$ for the generic extension of $(M,\mathcal X)$ by $G$. This is defined in the usual way, with the sets of the extension being the interpretations of set $\P$-names in $M$ and the classes of the extension being the interpretations of class $\P$-names in $\mathcal{X}$. If $\P$ is pretame then it follows from the preservation of $\GBCm$ that the sets in the extension are precisely the classes which are subclasses of sets. 

We wish to emphasize that the geology for the class-generic multiverse is not as nice as that for the set-generic multiverse. For example, to argue in \autoref{lemma:SubsetEqualsForcingExtension} that inclusion in the (set) generic multiverse is the same as being a forcing extension, we appealed to the intermediate model theorem. But it is a folklore result that the intermediate model theorem can fail for (tame) class forcings. Nevertheless, these difficulties will not concern us here. We are looking only at forcing extensions of a fixed model, not forcing extensions of grounds of forcing extensions of ... And our method of ensuring nonamalgamability---coding a catastrophic real---rules out amalgamability in the broader generic multiverse, not just among forcing extensions of a fixed model.

The primary concern is with models of \GBC, but our argument nowhere uses the axiom of powerset, so we state the theorem in a more general setting.

\begin{theorem}
\label{thm:WideClassForcing}
Assume $(M,\mathcal{X})$ is a countable transitive model of \GBCm $+$ \ECC, meaning that both $M$ and $\mathcal{X}$ are countable, and suppose a family of sets $\mathcal{A}$ is defined in $(M,\mathcal{X})$ by finite obstacles on a class $I\in\mathcal{X}$. 
Let $\set{\P_i}{i \in I} \in \mathcal{X}$ be a family of $\Ord$-wide forcing notions in $(M,\mathcal{X})$.\footnote{To clarify: the $\P_i$ are proper classes so the literal collection of all of them is too high in rank to be a class. But we can code this collection as a single class via pairs, say using the class $C = \{ (i,p) : i \in I \text{ and } p \in \P_i \}$ so that $\P_i = \{ p : (i,p) \in C \}$ is the $i$-th slice of $C$. And we must do something similar for sequences of classes, as we will use below.}
Then there are $G_i \subseteq \P_i$ generic over $(M,\mathcal{X})$ satisfying the following three properties.
\begin{enumerate}
\item If $A \in \mathcal{A}$, then $\prod_{i \in A} G_i$ is generic for $\prod_{i \in A} \P_A$ over $(M,\mathcal{X})$.
\item If $B \subseteq I$ is in $\mathcal{X}$ with $B \not \in \mathcal{A}$, then the family $\{ (M,\mathcal{X})[G_i] \mid i \in B \}$ does not amalgamate in the generic multiverse.
\item If $A^0,A^1 \in \mathcal A$ then $M[\prod_{i \in A^0} G_i] \cap M[\prod_{i \in A^1} G_i] = M[\prod_{i \in A^0 \cap A^1} G_i]$. 
\end{enumerate}
The products above are to be seen as suborders of the product $\prod_{i \in I} \P_i$, which may use supports from any ideal in $(M,\mathcal{X})$ which extends the finite ideal.
\end{theorem}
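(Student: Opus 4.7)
The plan is to adapt the blockchain construction from the proof of \autoref{thm:WideAmalgamability} to the class forcing context. Since $(M,\mathcal{X})$ is a countable model, the collection of dense subclasses, obstacle sets, and pairs of names we must address is still countable, so the recursive construction proceeds in order type $\omega$ exactly as before. I would enumerate tuples $(A^0,A^1,D,\sigma,\tau)$ where $A^0,A^1\in\mathcal{A}$, $D\in\mathcal{X}$ is dense in $\prod_{i\in A^0}\P_i$, and $\sigma,\tau$ are class names for subsets of the $(A^0\cap A^1)$-extension, and also enumerate the finite obstacle sets, running through each infinitely often. At each stage I build a descending condition $p_n\in\prod_{i\in I}\P_i$ (with supports from the given ideal) and a ``coding point'' $c_n$, maintaining the invariant that from $p_n\rest{B^0_n}$ together with any one coordinate generic $G_i$ with $i$ in the relevant coordinate set, we can recover the next coding data.

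The role that $\kappa$-wideness played in the set forcing version is now played by $\Ord$-wideness: below any condition of $\P_i$ there is a proper-class antichain, which we restrict to a sufficiently long set-sized initial segment indexed by ordinals. All the data we need to encode at stage $n$ -- namely the finite sets $B^0_{n+1}$ and $B^1_{n+1}$, the next bit of a catastrophic real $z$ for $M$, the control symbols $\sharp,\flat,\natural$ and the relevant restriction of the previous coding point -- is set-sized, so a single ordinal below $\Ord^M$ encodes it, and we pick the corresponding element of the antichain below $p_n^i$ at each coordinate. Global Choice and \ECC provide a uniform class-sized choice of these antichains and of the finite-sized selections made across the class $I$, so the entire construction is carried out inside $(M,\mathcal{X})$-definable class operators. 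The case analysis (Case 1; Case 2a; Case 2b(i); Case 2b(ii)) transfers verbatim.

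For the supporting tools I would verify the class versions of what is needed. First, the analogue of \autoref{lemma:ProductDecision} holds for class forcings with essentially the same proof, since it is a purely forcing-theoretic statement about restrictions and extensions that only uses existence of the forcing relation -- which is guaranteed here because each $\P_i$ is pretame, and pretameness is preserved by products along an ideal of supports (so $\prod_{i\in A}\P_i$ admits a forcing relation and preserves \GBCm). Second, for the nonamalgamation conclusion, the crucial point is that a pretame class forcing extension of $(M,\mathcal{X})$ satisfies \GBCm and therefore Replacement, so no model in the class-generic multiverse of $(M,\mathcal{X})$ can contain a real coding $\Ord^M$; hence extracting $z$ from the $G_i$ with $i$ in a finite obstacle $B'\subseteq B$ defeats amalgamability, just as in the set case. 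Property (1) (joint genericity) follows from processing every dense subclass of $\prod_{i\in A}\P_i$ in $\mathcal{X}$, and property (3) (intersection) is read off the construction exactly as in \autoref{thm:WideAmalgamability} via the $\sigma,\tau$-handling step.

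The main obstacle I anticipate is not the blockchain combinatorics, which is largely mechanical translation, but the bookkeeping and the correct invocation of the class-theoretic hypotheses: verifying that pretameness of the $\P_i$ propagates to each partial product $\prod_{i\in A}\P_i$ with the chosen supports (and hence that the relevant forcing relations and intermediate model theory are available in $(M,\mathcal{X})$), and using \ECC to collect the per-stage, per-coordinate choices into a single class in $\mathcal{X}$ so that the entire recursion is $\mathcal{X}$-legitimate. Once these are in hand, no new ideas beyond those of \autoref{thm:WideAmalgamability} are required.
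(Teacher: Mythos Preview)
Your approach is essentially the paper's: it too gives only a proof sketch, noting that the construction of \autoref{thm:WideAmalgamability} transfers once Global Choice supplies an $\Ord^M$-enumeration of $M$ and \ECC supplies the uniform family of proper-class antichains $Z(p)$ below each condition. One clarification worth making: the $\omega$-length recursion is carried out externally in $V$ (exactly as in the set case), not inside $(M,\mathcal{X})$, so \ECC is invoked once at the outset to obtain the single class $\seq{Z(p)}{p\in\P_i,\, i\in I}$ rather than to legitimize per-stage choices, and you may work with the full $\Ord$-length antichains rather than set-sized initial segments, since each stage's data is already a set and hence coded by a single ordinal index via the global well-order.
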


\begin{proof}[Proof sketch]
This is proved much the same as \autoref{thm:WideAmalgamability}. We give a sketch of the proof here, making clear where \ECC and the strong form of Global Choice are used. 

Fix the following notation: For classes $A \subseteq I$ let $\P_A = \prod_{i \in A} \P_i$ and similarly for $G_A$. By Global Choice fix an enumeration of $M$ in order type $\Ord^M$. Using \ECC, pick for each $p \in \P$ a proper-class-sized ($=$ $\Ord^M$-sized) maximal antichain $Z(p) \subseteq \P_i$ below $p$. Externally to $(M,\mathcal{X})$, fix $z$ a catastrophic real for $M$, an enumeration of $I$ in order type $\omega$, and an enumeration of all the relevant parameters from $M$ in order type $\omega$. We will build a descending $\omega$-sequence of conditions $p_n = \seq{p^i_n}{i \in I} \in \P_I$ whose restrictions to the $i$-th coordinate will generate $G_i$. At each stage in the construction we deal with $A^0, A^1 \in \mathcal{A}$, a dense subclass $D$ of $\P_{A^0}$ in $\mathcal{X}$, a $\P_{A^0}$-name $\sigma \in \mathcal X$, and a $\P_{A^1}$-name $\tau \in \mathcal X$, both for subsets or subclasses of $(M,\mathcal X)[G_{A^0 \cap A^1}]$. 

As before, each $G_i$ is determined by a descending sequence of conditions $\seq{p_i}{i \in \omega}$. We ensure that $G_A$ is generic for $A \in \mathcal A$ by meeting every dense subclass. And the key point of our construction will be to find \emph{coding points} which can then be used to guide the decoding process if we are given generics $G_i$ for all $i$ in some bad subset of $I$ which is not in $\mathcal A$. In light of the global well-order, any element of $M$ can be coded by the index of a condition in the proper-class-sized antichain $Z(p)$ below $p$. (This uses that the global well-order has ordertype $\Ord$.) As before, we start with $p^i_0$ being the top condition in $\P_i$. And the construction proceeds from stage $n$ to stage $n+1$ the same as before, breaking down into the same tree of cases.

Having carried out the construction, we can verify $(1)$, $(2)$, and $(3)$ the same as before. 
\end{proof}

For specific class forcings, we may get the same result assuming neither Global Choice nor Elementary Class Choice. Suppose, as an example, that each $\P_i$ is $\Add(\Ord,1)$, the class of functions from an ordinal to $\{0,1\}$, ordered by reverse inclusion. Then we can code the catastrophic real directly, as in the argument for theorem \ref{thm:Mostowski}. Indeed, this was the forcing notion Mostowski used in his original investigations. 

And once again, the exact same way as we proved \autoref{thm:EmbedFamilyOfSetsIntoCohenMultiverse} using
\autoref{thm:GeneralBlockchain}, and \autoref{cor:EmbeddingIntoWideMultiverse} using \autoref{thm:WideAmalgamability}, we can derive the following as a corollary.

\begin{corollary}
	Let $(M,\mathcal{X})$, $I$, $\mathcal{A}$, and the forcing notions $\P_i$ be as in \autoref{thm:WideClassForcing}.
	Then $(\mathcal{A},\subseteq)$ $*$-embeds into the class-generic multiverse of $(M,\mathcal{X})$ given by products of the forcing notions
	$\P_i$ (and mapping $\varnothing$ to $(M,\mathcal{X})$).
\end{corollary}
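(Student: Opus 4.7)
The plan is to apply Theorem~\ref{thm:WideClassForcing} and then repeat verbatim the $*$-embedding argument used for \autoref{thm:EmbedFamilyOfSetsIntoCohenMultiverse} and \autoref{cor:EmbeddingIntoWideMultiverse}, with products of pretame class forcings replacing set-forcing products and models of \GBCm replacing models of \ZFC. Concretely, use Theorem~\ref{thm:WideClassForcing} to obtain generics $G_i \subseteq \P_i$ over $(M,\mathcal{X})$ satisfying properties (1)--(3). Define $f \colon \mathcal{A} \to$ class-generic multiverse of $(M,\mathcal{X})$ by $f(\varnothing) = (M,\mathcal{X})$ and $f(A) = (M,\mathcal{X})[\prod_{i \in A} G_i]$ for nonempty $A \in \mathcal{A}$. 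The target lies in the class-generic multiverse because products of pretame forcings (under the supports permitted by the theorem) remain pretame. Monotonicity of $f$ is immediate from the inclusion of index sets.

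For order-reflection and injectivity, suppose $f(A) \subseteq f(A')$. Then the family $\set{(M,\mathcal{X})[G_i]}{i \in A \cup A'}$ amalgamates over $f(A')$, so property~(2) forces $A \cup A' \in \mathcal{A}$, and property~(1) then gives mutual genericity of $\seq{G_i}{i \in A \cup A'}$. If some $i_0 \in A \setminus A'$ existed, then $G_{i_0} \in f(A) \subseteq f(A')$ would have to be generic over $f(A')$, which is absurd. Hence $A \subseteq A'$. The preservation of upper bounds follows by the same argument: if a finite $X \subseteq \mathcal{A}$ has amalgamable image, then $\bigcup X \in \mathcal{A}$ is an upper bound.

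For nonzero lower bounds, the only nontrivial direction is the backward one. Suppose $X \subseteq \mathcal{A}$ is finite and $\set{f(A)}{A \in X}$ has a nonzero lower bound $(N,\mathcal{Y})$ in the class-generic multiverse, meaning $(M,\mathcal{X}) \subsetneq (N,\mathcal{Y}) \subseteq f(A)$ for every $A \in X$. In particular $N \subseteq \bigcap_{A \in X} M[\prod_{i \in A} G_i]$, which by property~(3) equals $M[\prod_{i \in \bigcap X} G_i]$. If $\bigcap X = \varnothing$ this collapses to $N \subseteq M$, whence $N = M$; pretameness of the class forcings involved then forces $(N,\mathcal{Y}) = (M,\mathcal{X})$, contradicting strictness. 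Therefore $\bigcap X \neq \varnothing$, and since $\mathcal{A}$ is closed under subsets, $\bigcap X \in \mathcal{A}$ is the required nonzero lower bound.

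The main obstacle, mild as it is, lies in this last paragraph: ensuring that a nonzero lower bound in the \emph{class}-generic multiverse witnesses genuine new sets rather than merely extra classes. This is handled by pretameness, which guarantees that the first-order parts of the extensions really are the $M[\prod_{i \in A} G_i]$, so that any strict class-multiverse extension of $(M,\mathcal{X})$ sitting inside every $f(A)$ must have a strictly larger first-order part, reducing the situation to property~(3). Once this is in place, the corollary follows with no further work beyond the bookkeeping already carried out for the set-forcing versions.
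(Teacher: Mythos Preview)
Your approach is exactly the one the paper indicates: invoke \autoref{thm:WideClassForcing} and rerun the proof of \autoref{thm:EmbedFamilyOfSetsIntoCohenMultiverse}. The order-reflection and upper-bound parts are fine.

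There is, however, a slip in your treatment of nonzero lower bounds. You argue via first-order parts, using property~(3) as literally stated (about $M[\ldots]$ rather than $(M,\mathcal{X})[\ldots]$), and then try to close the gap by asserting that pretameness forces any strict class-multiverse extension of $(M,\mathcal{X})$ lying inside every $f(A)$ to have a strictly larger first-order part. That claim is not justified by pretameness: $\Add(\Ord,1)$ is pretame, $\Ord$-wide, and adds no sets whatsoever, so a strict extension of $(M,\mathcal{X})$ can perfectly well have first-order part $M$. In that situation your argument would not rule out a model $(M,\mathcal{Y})$ with $\mathcal{X}\subsetneq\mathcal{Y}$ sitting below all the $f(A)$.

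The right repair is not through pretameness but through property~(3) itself, read at the level of full second-order models. The construction in the proof sketch of \autoref{thm:WideClassForcing} explicitly runs over \emph{class} names $\sigma,\tau\in\mathcal{X}$ for subsets or subclasses of $(M,\mathcal{X})[\Gdot_{A^0\cap A^1}]$, so what is actually established is
\[
(M,\mathcal{X})\Bigl[\prod_{i\in A^0}G_i\Bigr]\cap(M,\mathcal{X})\Bigl[\prod_{i\in A^1}G_i\Bigr]=(M,\mathcal{X})\Bigl[\prod_{i\in A^0\cap A^1}G_i\Bigr].
\]
With this in hand, a nonzero lower bound $(N,\mathcal{Y})$ is contained in the intersection of the full models $f(A)$, which for $\bigcap X=\varnothing$ collapses to $(M,\mathcal{X})$, giving the desired contradiction without any appeal to first-order parts.
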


\section{Surgery and the Mutable Blockchain}

In this section we wish to revisit the results from \autoref{sec:Blockchain} and present
an alternative proof of Mostowski's \autoref{thm:Mostowski}.
While the obtained result is substantially the same, we believe that the
method we employ, \emph{surgery}, is quite powerful and can be used to answer other questions in the
area. The main difference between Mostowski's argument (and the derived blockchain construction) 
and the surgery method we are about to give is that the blockchain argument creates the required
generic objects (and thus the $*$-embeddings) completely anew, whereas with the surgery approach we will
be able to start with generic objects given in advance and modify them in such a way that they
realize the desired $*$-embedding.

We remind the reader that, in this section, we work exclusively with the version of Cohen forcing 
$\Add(\omega,1)$ whose conditions are finite binary strings. We will comment later why other
presentations of this forcing notion are not suitable.

The following definition introduces a useful way of surgically modifying one function to match another
one on a subset of its domain.

\begin{definition}
	Let $f$ and $g$ be (partial) functions defined on $\omega$ and let $A\subseteq\dom(f)$
	have the same cardinality as $\dom(g)$. Let $e\colon A\to \dom(g)$ be the unique order-preserving
	bijection.
	The \emph{graft} of $g$ onto $f$ on $A$ is the function $f\graft_A g$, defined on
	$\dom(f)$ as:
	\[
	(f\graft_A g)(x):= \begin{cases}
	f(x)& x\notin A\\
	g(e(x))& x\in A 
	\end{cases}
	\]
In other words, the graft $f\graft_A g$ is the result of replacing the values of $f$ on $A$
with the corresponding values of $g$. If $A=\dom(g)$, we omit it and just write $f\graft g$.
\end{definition}

\begin{definition}
	Let $p$ be a condition in $\Add(\omega,1)$, let $D$ be a dense open subset of this poset,
	and let $n$ be a natural number. We say that $p$ is \emph{$(n,D)$-immune} if any condition
	$q$ satisfying $q(i)=p(i)$ for all $i\geq n$ is in $D$. In other words, a condition is
	$(n,D)$-immune if any modification of it on the coordinates below $n$ results in a condition
	in $D$.
	
	Given conditions $p$ and $q$, we say that $q$ is a \emph{$D$-immunization} of $p$ if $q\leq p$
	and $q$ is $(|p|,D)$-immune.
\end{definition}

We should note that any $(n,D)$-immune condition is itself already in $D$.

\begin{proposition}
\label{prop:ImmunizationsExist}
	Let $D$ be a dense open subset of $\Add(\omega,1)$. Any condition $p\in\Add(\omega,1)$
	has a $D$-immunization.
\end{proposition}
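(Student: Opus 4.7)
My plan is to iteratively extend $p$ so that, after finitely many steps, every modification of the first $|p|$ bits yields a condition in $D$. Writing $n := |p|$, enumerate the $2^n$ binary strings of length $n$ as $s_0, \ldots, s_{2^n-1}$. I will construct a chain of finite binary strings $\varnothing = t_0 \sqsubseteq t_1 \sqsubseteq \cdots \sqsubseteq t_{2^n}$, to be viewed as successive \emph{tails} past coordinate $n$, maintaining the invariant that $s_j \frown t_i \in D$ for every $j < i$.

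Given $t_i$, I apply density of $D$ to the condition $s_i \frown t_i$ to find a finite $u_i$ with $s_i \frown t_i \frown u_i \in D$, and set $t_{i+1} := t_i \frown u_i$; by construction $s_i \frown t_{i+1} \in D$. For any $j < i$, the string $s_j \frown t_{i+1}$ extends $s_j \frown t_i$, which lies in $D$ by the inductive hypothesis, so $s_j \frown t_{i+1} \in D$ by openness of $D$. After $2^n$ stages, set $q := p \frown t_{2^n}$. Any condition of length $|q|$ agreeing with $q$ on coordinates $\geq n$ has the form $s \frown t_{2^n}$ for some binary string $s$ of length $n$, and so lies in $D$. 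Hence $q \leq p$ is a $D$-immunization of $p$.

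The construction is essentially forced, and the single observation to flag is that openness of $D$ is precisely what preserves the invariant: lengthening the tail further never undoes a previously secured membership. This also foreshadows why the coming surgery method will work---gains from extending along the generic are permanent and cannot be disturbed by later modifications in the low coordinates. I do not expect a serious obstacle; the only place the particular presentation matters is that one must be able to separate ``the first $n$ coordinates'' from the rest and enumerate the $2^n$ possibilities, which is exactly the feature of the finite-binary-string presentation of $\Add(\omega,1)$ that the paper has just singled out.
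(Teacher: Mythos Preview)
Your proof is correct and follows essentially the same approach as the paper: enumerate the $2^{|p|}$ possible length-$|p|$ prefixes, then iteratively extend so that each prefix in turn, followed by the current tail, lands in $D$, with openness of $D$ preserving earlier gains. The only difference is notational---the paper phrases the construction via the graft operation $p_{i+1} := \bar{p}_{i+1} \graft p$ rather than maintaining an explicit tail $t_i$---but the two presentations are in direct correspondence (your $p \frown t_i$ is the paper's $p_i$).
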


\begin{proof}
	Fix a condition $p$. Since we are working with conditions that are binary sequences, there are only
	finitely many conditions of length $|p|$, and we enumerate them as $\seq{q_i}{i<N}$.
	We will build a descending sequence of increasingly immune conditions $p_i$ for $i\leq N$. 
	Start by letting $p_0:=p$. In general, given $p_i$, we first let $\bar{p}_{i+1}$ be an 
	extension of $p_i\graft q_i$ inside $D$ and then define $p_{i+1}:=\bar{p}_{i+1}\graft p$.
	It is then clear that $p_N$ is a $D$-immunization of $p$.
\end{proof}

The existence of immunizations is the main reason why we need to work with the binary sequence
version of Cohen forcing in this section. Working instead with conditions as finite sequences of natural numbers,
for example, it is not hard to come up with a dense open set $D$ such that no condition at all
is even $(1,D)$-immune.

The utility of immune conditions is that they admit limited amounts of surgery, while remaining
in a given dense set. The plan is then to build a generic filter from a sequence of increasingly
immune conditions with the hope that the resulting Cohen real will be able to withstand
surgery on unboundedly many coordinates while remaining generic.
As a simple example, we give the following result, which amounts to an improved version of
\autoref{prop:NonamalgamablePair}.

\begin{theorem}
\label{thm:MutuallyCohenCovert}
	Let $c$ and $d$ be mutually generic Cohen reals over $M$ and let $g\colon\omega\to 2$
	be a function (not necessarily in $M$). Then $c\graft_d g$ is a Cohen real over $M$.
\end{theorem}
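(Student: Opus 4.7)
The plan is to show that for every dense open $D \subseteq \Add(\omega,1)$ in $M$, the real $c \graft_d g$ has an initial segment in $D$. Since $(c,d)$ is generic for the product $\Add(\omega,1) \times \Add(\omega,1)$ over $M$, this will be accomplished by producing, for each $D$, a dense subset $F_D$ of the product (in $M$) whose pairs encode enough immunity in $c$ against surgical modification along $d$ to force the initial segment of the graft into $D$ regardless of what $g$ is. The potential non-membership of $g$ in $M$ will be immaterial, since $g$ enters only at the very end, when a specific initial segment is extracted.

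Concretely, I would define, for each such $D$,
\[
F_D = \set{(p,q) \in \Add(\omega,1)^2}{|p|=|q| \text{ and } p \graft_{A_q} h \in D \text{ for every } h\colon A_q \to 2},
\]
where $A_q = \set{i < |q|}{q(i)=1}$ is the finite subset of $|q|$ picked out by $q$. The main task---and the main obstacle---is showing that $F_D$ is dense, and the plan here is to adapt the immunization argument of \autoref{prop:ImmunizationsExist}: given $(p_0,q_0)$, pad the shorter component so that $|p_0|=|q_0|=n$ and $A_{q_0} \subseteq n$ is fixed; enumerate the finitely many functions $h\colon A_{q_0} \to 2$; and recursively build extensions of $p_0$ that land in $D$ after modification by each such $h$ on $A_{q_0}$, grafting back to $p_0$'s original values on $A_{q_0}$ between steps. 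The resulting condition $p$ satisfies $p \graft_{A_{q_0}} h \in D$ for every such $h$, and padding $q_0$ with zeros up to length $|p|$ keeps $A_q = A_{q_0}$ and places $(p,q)$ in $F_D$ below $(p_0,q_0)$.

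Once density is in hand, the rest falls out quickly. Mutual genericity of $(c,d)$ yields some $m$ with $(c \rest{m}, d \rest{m}) \in F_D$; since $d$ is generically infinite and the first $|A_q|$ elements of $d$ (viewed as a subset of $\omega$) are exactly $A_q$, unwinding the definition of graft gives $(c \graft_d g) \rest{m} = (c \rest{m}) \graft_{A_q} (g \rest{|A_q|})$, which lies in $D$ by the defining property of $F_D$. Hence $c \graft_d g$ meets $D$, so it is an $M$-generic Cohen real. It is worth emphasizing that the density of $F_D$ is exactly where the binary-sequence presentation of $\Add(\omega,1)$ is essential: under other presentations, as hinted in the remark immediately following \autoref{prop:ImmunizationsExist}, immunization along arbitrary finite subsets can fail.
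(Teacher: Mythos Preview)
Your proof is correct and follows essentially the same approach as the paper: both define a dense subset of the product forcing encoding enough immunity of $p$ against surgical modification on the positions marked by $q$, and then use mutual genericity of $(c,d)$ to extract an initial segment of $c\graft_d g$ in $D$. The only cosmetic difference is that the paper's dense set requires $(k,D)$-immunity on the full initial segment $[0,k)$ (with all $1$s of $q$ below $k$), so that \autoref{prop:ImmunizationsExist} can be quoted verbatim, whereas you immunize more precisely on $A_q$ itself and reprove the immunization step tailored to that set; the arguments are otherwise identical.
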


\begin{proof}
	Let $D\in M$ be a dense open subset of $\Add(\omega,1)$. We wish to check that
	$c':=c\graft_d g$ meets $D$. Consider the following subset of $\Add(\omega,2)$:
	\[
	E:=\set{(p,q)}{\text{for some $k<\omega$, $p$ is $(k,D)$-immune and $q$ has $0$s on the interval $[k,|p|)$}}
	\]
	The set $E$ is dense in $\Add(\omega,2)$. This is because, given any condition
	$(p,q)$ in this poset (where we may assume that $|p|=|q|$), we can simply find a
	$D$-immunization $p'$ of $p$, using \autoref{prop:ImmunizationsExist} and pad 
	$q$ with $0$s up to the length of $p'$.
	
	Since $E$ is dense, there is a pair $(p,q)\in E$ such that $p$ and $q$ are initial
	segments of $c$ and $d$, respectively. Now consider $p\graft_q g$, where we identify
	$q$ with $q^{-1}[\{1\}]$. This is an initial segment of $c\graft_d g$, and it lies
	in $D$, since $p$ was immune up to the last $1$ of $q$ below $|p|$. Therefore
	$c\graft_d g$ meets $D$ and is truly a Cohen real over $M$.
\end{proof}

\begin{corollary}
\label{cor:EveryCohenPartOfNonamalgamablePair}
	Let $c$ be a Cohen real over $M$. Then there is another Cohen real $d$ over $M$ which does not 
	amalgamate with	$c$.
\end{corollary}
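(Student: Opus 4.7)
The strategy is to construct $d$ by surgery, using the given $c$ as a template for \emph{where} to place coding bits and a catastrophic real for \emph{what} to code. First I would pick $d_0$ a Cohen real over $M[c]$, so that $c$ and $d_0$ are mutually generic Cohen reals over $M$, and fix $z$ a catastrophic real for $M$ (which exists because $M$ is countable). I then define
$$d := d_0 \graft_c z,$$
identifying the Cohen real $c$ with the set $A := c^{-1}[\{1\}] \subseteq \omega$. In words, $d$ agrees with $d_0$ off $A$ and carries the bits of $z$, in order, at the positions where $c$ takes the value $1$.

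The claim that $d$ is a Cohen real over $M$ is immediate from \autoref{thm:MutuallyCohenCovert} applied to the mutually generic pair $(d_0, c)$: that theorem guarantees that $d_0 \graft_c z$ is a Cohen real over $M$ for \emph{any} function $z \colon \omega \to 2$, and in particular for the catastrophic $z$ we have chosen. The graft is well-defined because $A$ is infinite, which is routine by a density argument for Cohen genericity.

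For nonamalgamability, suppose for contradiction that some $N$ in the generic multiverse of $M$ contained both $M[c]$ and $M[d]$. Since $c \in N$, both $A$ and the order-preserving bijection $A \to \omega$ lie in $N$; since also $d \in N$, the restriction $d \rest{A}$, transported along this bijection, lies in $N$, and by construction it equals $z$. So $z \in N$, which contradicts the catastrophicity of $z$: any model in the generic multiverse of $M$ has the same ordinal height as $M$, but any model of \ZFC containing $z$ has strictly greater height.

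I expect no serious obstacle. The only things to double-check are that \autoref{thm:MutuallyCohenCovert} applies with the roles of its two Cohen arguments swapped---which it does, by the symmetry of mutual genericity---and that $A$ is infinite so that the graft is defined; both are immediate.
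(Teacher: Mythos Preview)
Your proof is correct and follows essentially the same approach as the paper: pick a Cohen real mutually generic with $c$, graft the catastrophic real $z$ onto it along the coordinates where $c$ is $1$, invoke \autoref{thm:MutuallyCohenCovert} to see the result is Cohen over $M$, and observe that any model containing both $c$ and $d$ can decode $z$. The paper's argument is the same, only slightly terser.
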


\begin{proof}
	Let $d'$ be a Cohen real mutually generic with $c$, and fix a catastrophic real $z$
	for $M$. By \autoref{thm:MutuallyCohenCovert}, the real $d:=d'\graft_c z$ is Cohen over
	$M$. But any model containing both $c$ and $d$ would be able to recover $z$
	as the bits of $d$ on the coordinates in $c$, and therefore cannot be a forcing extension of
	$M$. Thus $c$ and $d$ are not amalgamable.
\end{proof}

This corollary provides a glimpse of possible improvements to results establishing the existence
of $*$-embeddings into the generic multiverse, such as \autoref{cor:FinitePosetsEmbed}. Specifically,
it hints at a possible extension-of-embeddings phenomenon, whereby a $*$-embedding of a 
subposet into the generic multiverse could be extended to a $*$-embedding of the whole poset.
Recall that the Turing degrees enjoy this kind of property: if $Q$ is a subposet of $P$,
and moreover an initial segment, then any embedding of $Q$ into the Turing degrees extends
to an embedding of $Q$.\footnote{This result is essentially sharp. The extension-of-embeddings
problem in the Turing degrees, and various substructures such as the c.\,e.\ degrees or the $\Delta^0_2$ degrees, has a long and complex history. The interested reader might 
consult~\cite[Chapters II \& VII]{Lerman:DegreesOfUnsolvability} or the survey paper~\cite{Shore2006:DegreeStructures}.}
It seems likely that a similar result can be proved for the generic multiverse.

\begin{question}
	Does the generic multiverse exhibit an extension-of-embeddings phenomenon? That is, if $Q \subseteq P$ are posets and $Q$ embeds (or $*$-embeds) into the order structure of the generic multiverse, must it be that the embedding extends to an embedding (or $*$-embedding) of all of $P$? If not, is this true under reasonable assumptions on $P$ and $Q$?
\end{question}

In the case of $*$-embeddings, it is clear that additional requirements should be put on
$P$ and $Q$ in order to avoid trivialities. For example, if two points in $Q$ 
do not have an upper bound in $Q$ but have one in $P$, then no $*$-embedding of $Q$ can extend
to a $*$-embedding of $P$. One should therefore require, at least, that $P$ not add
any upper bounds to points in $Q$ that did not have them before.
Answering this question would also have a bearing on the decidability of the
$\forall\exists$-theory of the generic multiverse, improving on the arguments from
\autoref{cor:ExistsTheoryDecidable} and \autoref{cor:ExistsSemilatticeTheoryDecidable}.

One can also consider a wider context.
Friedman and Hathaway~\cite{FriedmanHathaway:GenericCodingWithHelp} recently used a completely different coding
method based on Hechler forcing to show that, given any pair of distinct transitive models of set theory of the same height, there is a forcing extension of one which
does not amalgamate with the other. This gives a version of \autoref{cor:EveryCohenPartOfNonamalgamablePair} in the context of the hyperuniverse.
Although Hechler forcing is not wide, and so \autoref{thm:WideAmalgamability} does not apply,
there might be a version of the blockchain argument that can be used with that forcing
and their coding method to give a variety of results analogous to the ones in this paper.
Notably, Friedman and Hathaway leave the extension-of-embeddings problem for the hyperuniverse
open, and it seems that this should serve as a significant goal, likely requiring the combination
of their and our methods.

\begin{question}
	Let $P$ and $N$ be finite sets of countable transitive models of set theory of the
	same height and suppose that no model in $N$ is a subset of a model in $P$.
	Is there a transitive model of that same height that amalgamates with each model in $P$
	and does not amalgamate with any model in $N$?
\end{question}

Before we perform surgery on a family of Cohen reals to obtain a desired pattern of amalgamability, 
we will have to prepare them slightly.
We now describe this transformation of a family of reals, called \emph{priming}.

\begin{definition}
	Let $y\colon\omega\to 2$ be a real. We say that $i$ is a \emph{good point} for $y$
	if $y(i)=1$ and $y(i+1)=0$. We write $G(y):=\set{i}{\text{$i$ is a good point for $y$}}$
	and $C(y):=\set{i+1}{i\in G(y)}$.
	
	If $x,y,z$ are reals, we write $x[z/y]:=x\graft_{C(y)}z$. Often $y$ will be clear from context
	and we will simply write $x[z]$.
	
	If $x_0,\dotsc,x_{n-1},y$ are reals, the \emph{primed} version of $\vec{x}$ with
	respect to $y$ is $\vec{x}'$, where
	\[
	x_k'(i):=
	\begin{cases}
		1& i\in G(y)\\
		0& i\in C(y)\\
		0& i\notin G(y)\cup C(y) \text{ and } |G(y)\cap i|\equiv k \pmod{n}\\
		x_k(i)& \text{otherwise}
	\end{cases}
	\]
	We say that $k$ is \emph{inactive} at $i$ if 
	$i\notin G(y)\cup C(y) \text{ and } |G(y)\cap i|\equiv k \pmod{n}$.
\end{definition}

Given reals $x_0,z_0,\dotsc,x_{n-1},z_{n-1},y$ we will quite often abuse notation and
write $\vec{x}'$ for the tuple of the primed reals $x_0',\dotsc,x_{n-1}'$, and 
$\vec{x}[\vec{z}]$ for the tuple of reals $x_0[z_0],\dotsc,x_{n-1}[z_{n-1}]$.

\begin{figure}[ht]
	\begin{tikzpicture}[scale=.4]
	% the outlines
	\foreach \l [count=\c from 0] in {0,3,6} {
		\draw (\l,25)--(\l,0)--(\l+1,0)--(\l+1,25);
		\draw (\l+.5,-1) node[scale=1] {$x_\c'$};	
	}
	\draw (10,25)--(10,0)--(11,0)--(11,25);
	\draw (10.5,-1) node[scale=1] {$y$};
	% coding rows
	\foreach \r [count=\c from 0] in {6,11,17,24} {
		\foreach\i in {0,3,6,10}
		{\draw (\i,\r) rectangle node[scale=.7,opacity=1] {$1$} +(1,1);
			\draw (\i,\r+1) rectangle node[scale=.7,opacity=1] {$0$} +(1,1);
		}}
	% the blocks
	\foreach \f/\l [count=\c from 0] in {0/5,8/10,13/16,19/23} {
		\foreach \r in {\f,...,\l} {
			\draw[pattern=north east lines] (10,\r) rectangle +(1,1);
			\foreach \col in {1,2} {
				\draw[pattern=north east lines] ({3*mod(\c+\col,3)},\r) rectangle +(1,1);
			}
		}
	}
	\end{tikzpicture}
	\caption{Priming $x_0,x_1,x_2$ with respect to $y$}
	\label{fig:priming}
\end{figure}

The diagram in \autoref{fig:priming} represents the result of priming three reals $x_0,x_1,x_2$ with respect to $y$.
The blank blocks represent indices where a particular column is inactive and where $0$s have been inserted.
The operation $x_0'[z/y]$ then amounts to placing $z$ onto the indicated bits of $x_0'$
that occur just above the all-one rows. 
The picture is again divided into blocks, fit between the good 
points of $y$. As we shall see, the structure of the blocks will ensure that genericity properties
of the original reals are preserved after priming and surgery. Since the blocks are intended to
accept modification via surgery, we might describe the whole construction as the mutable blockchain.

An easy but important observation to make is that, given reals $\vec{x}$ and $y$, the primed
versions $\vec{x}'$ are never mutually generic Cohen reals, even if the original reals $\vec{x}$
were. This is because we have ensured in the priming procedure that two consecutive all-one rows 
never appear, but this would have to happen in a mutually generic family. The following lemma
shows that this is essentially the only failure of mutual genericity. If we start with mutually generic
Cohen reals, then, after priming and possibly even additional surgery, any proper subfamily
of these modified reals remains mutually generic. Moreover, this remains true even if we add some
unprimed reals into the mix.

\begin{lemma}
\label{lemma:PrimingPreservesMutualGenerics}
	Suppose that $u_0,\dotsc,u_{m-1},x_0,\dotsc,x_{n-1},y$ are mutually generic Cohen reals. Let
	$\vec{x}'$ be the primed version of $\vec{x}$ with respect to $y$.
	Let $A\subsetneq n$ and fix reals $w_0,\dotsc,w_{m-1}$ and $z_k$ for $k\in A$. Then
	$\set{u_k[w_k]}{k<m} \cup \set{x_k'[z_k]}{k\in A}$ are also mutually generic Cohen reals.
\end{lemma}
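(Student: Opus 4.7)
The plan is to generalize the argument of \autoref{thm:MutuallyCohenCovert} to this multi-column, primed setting. Fix a dense open set $D \in M$ in $\Add(\omega, m+|A|)$, and let $\vec c$ denote the $(m+|A|)$-tuple whose first $m$ entries are the reals $u_k[w_k]$ for $k<m$ and whose remaining $|A|$ entries are $x'_k[z_k]$ for $k \in A$. We work in the product forcing $\P := \Add(\omega, m+n+1)$, whose generic is realised by the mutually generic family $(\vec u, \vec x, y)$. We will exhibit a dense set of \emph{good} conditions in $\P$ which force some common length-$N$ initial segment of $\vec c$ to lie in $D$, irrespective of the parameters $\vec w$ and $\vec z$. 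Mutual genericity of $(\vec u, \vec x, y)$ then guarantees that the generic meets a good condition, so $\vec c$ meets $D$, establishing the claim.

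Call $(\vec p, \vec q, r) \in \P$ \emph{good} (for $D$) if its components have a common length $N$, the string $r$ ends with the bits $10$, and, letting $q^*_k$ denote the length-$N$ primed version of $q_k$ with respect to $r$, the condition $(\vec p, (q^*_k)_{k \in A}) \in \Add(\omega, m+|A|)$ is $D$-immune to modifications at the coding positions $C(r) \cap [0, N)$, meaning that every length-$N$ condition that agrees with it off these positions lies in $D$.

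To show density, start from an arbitrary $(\vec p, \vec q, r) \in \P$ and extend in three stages. First, pad components to a common length. Second, using $A \subsetneq n$, pick some $k^* \in n \setminus A$ and append copies of $10$ to $r$ (padding $\vec p$ and $\vec q$ with zeros) until reaching a length $N_1$ satisfying $|G(r) \cap N_1| \equiv k^* \pmod{n}$. The effect of this parity adjustment is that, whenever $r$ is subsequently extended past $N_1$ by zeros, the unique inactive column of the priming structure on the tail $[N_1, \cdot)$ is $k^* \notin A$, leaving the columns $q_k$ for $k \in A$ entirely free on that tail. Third, iterate a straightforward multi-column analogue of \autoref{prop:ImmunizationsExist}: starting from the length-$N_1$ condition induced in $\Add(\omega, m+|A|)$, enumerate the finitely many possible modifications at $C(r) \cap [0, N_1)$ and, at each step, graft the modification onto the current condition, extend into $D$, and then graft back the original values at $C(r) \cap [0, N_1)$. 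This produces a $D$-immune extension $\vec c^{**}$ of some length $N \geq N_1$ in $\Add(\omega, m+|A|)$.

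Finally, pull $\vec c^{**}$ back into the product: extend $r$ with zeros on $[N_1, N)$; extend $\vec p$ using the first $m$ columns of $\vec c^{**}$ and each $q_k$ for $k \in A$ using the corresponding remaining column; pad $q_{k^*}$ with zeros (as forced by inactivity) and the other $q_j$ arbitrarily. The resulting condition is good. The main subtlety---that the immunization, carried out in the smaller forcing $\Add(\omega, m+|A|)$, must be liftable to a compatible extension in $\P$ without violating the priming structure on the tail---is exactly what the parity adjustment using $k^*$ is designed to handle, and the hypothesis $A \subsetneq n$ is precisely what supplies such a $k^*$.
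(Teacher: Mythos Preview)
Your approach is sound and follows the same overall strategy as the paper: define a dense set of sufficiently immune conditions in $\Add(\omega, m+n+1)$, meet it with the mutually generic tuple $(\vec u,\vec x,y)$, and invoke immunity to conclude that the modified reals meet $D$. The execution differs in an interesting way. The paper immunizes the \emph{unprimed} condition $(\vec p, \vec q \rest A)$ at \emph{all} positions below some height $i$, and then arranges (by requiring $r$ to be $0$ on $[i,|r|)$ and some fixed $\ell \in n\setminus A$ to be the inactive column there) that on this tail the primed-and-grafted version literally coincides with the unprimed one; the blanket immunity below $i$ then absorbs all priming and grafting damage in one stroke. You instead prime first and immunize only at the coding positions $C(r)$, which is more targeted but obliges you to track the priming structure explicitly throughout. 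Both uses of an index outside $A$---your $k^*$, the paper's $\ell$---serve the identical purpose of freeing the $A$-columns on the tail so that the immunized extension can be pulled back into the full product. The paper's version is terser; yours makes the mechanism more transparent.

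One small inconsistency to fix: your definition of ``good'' requires $r$ to end with the bits $10$, but after the zero-extension in the pullback step the resulting $r$ ends with a block of $0$s (unless $N=N_1$, which would require the starting condition to already be immune). Simply relax the requirement to ``$r(N-1)=0$''; this already guarantees $G(r)=G(y)\cap[0,N)$ and hence $C(r)=C(y)\cap[0,N)$, which is all you need for the length-$N$ primed version of $q_k$ with respect to $r$ to match the length-$N$ initial segment of $x_k'$, and for the grafting positions to be correctly identified.
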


% MH: The point is that if you start with some mutually generic reals, and you prime some of them,
%	and you take some, but not all, of the primed ones together with the unprimed ones, they will
%	still be mutually generic.

\begin{proof}
	This is much like the proof of \autoref{thm:MutuallyCohenCovert}, but expanded to
	a forcing that adds several Cohen reals and with a proper understanding of immunity
	for conditions in such a poset. For the purposes of this proof, we shall consider the posets
	$\Add(\omega,X)$ to consist of finite binary partial functions, all of whose
	nonempty columns have the same height. Moreover, if $D\subseteq \Add(\omega,X)$ is a dense
	open set, we shall say that a condition $p\in\Add(\omega,X)$ is $(n,D)$-immune if
	it remains in $D$ even after any modification of its \emph{nonempty} columns below height $n$.
	
	Let $D$ be a dense open subset of $\Add(\omega,A+1)$ and fix some $\ell\in n\setminus A$.
	Consider the following subset of $\Add(\omega,m+n+1)$:
	\begin{align*}
	E:=\{(\vec{p},\vec{q},r)\mid \exists i<\omega\colon & \text{$(\vec{p},\vec{q}\,\rest{A})$ is $(i,D)$-immune,}\\
												& \text{$r$ has only $0$s on the interval $[i,|r|)$, and}\\
												& \text{$\ell$ is inactive at every point of $[i,|r|)$}\}
	\end{align*}
	A similar argument to the one in the proof of \autoref{thm:MutuallyCohenCovert} shows that $E$
	is dense. Therefore $(\vec{u},\vec{x},y)$ meets $E$. As in that previous proof, it again
	follows by an immunity argument that $(\vec{u}\,[\vec{w}],(\vec{x}'\rest{A})[\vec{z}])$ meets $D$.
\end{proof}

Using essentially the same idea, we can extend the lemma to deal with a whole matrix of mutually generic
Cohen reals, with each row being primed with respect to a different real. We omit the proof, since
it is quite similar to the one just given.

\begin{lemma}
\label{lemma:PrimingPreservesMutualGenericsBetter}
	Suppose that $u^j_0,\dotsc,u^j_{m_j-1},x_0^j,\dotsc,x_{n_j-1}^j,y^j,v^j$ for $j<J$ are mutually generic Cohen reals.
	Let $(\vec{x}^j)'$ be the primed version of $\vec{x}^j$ with respect to $y^j$. Let
	$A^j\subsetneq n_j$ and fix reals $w^j_0,\dotsc, w^j_{m_j-1}$ and $z^j_k$ for $k\in A^j$. Then
	\[\set{(x^j_k)'[z^j_k/y^j]}{k\in A^j,j<J}\cup \set{u^j_k[w^j_k/v^j]}{k<m_j,j<J}\] are also mutually generic Cohen reals.
\end{lemma}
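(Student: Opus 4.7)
The plan is to run the same density argument as in the proof of \autoref{lemma:PrimingPreservesMutualGenerics}, now working in the product Cohen forcing $\Add(\omega,K)$ indexed by all the original columns $u^j_k, x^j_k, y^j, v^j$ for $j<J$. Given a dense open $D \subseteq \Add(\omega,\Lambda)$ with $\Lambda$ enumerating the target post-surgery columns (the $u^j_k$ for $k<m_j$ and the $x^j_k$ for $k \in A^j$), the goal is to show that the subset $E \subseteq \Add(\omega, K)$ consisting of those conditions $p$ admitting a height $i < |p|$ with
\begin{itemize}
\item the $D$-columns of $p$ being $(i,D)$-immune,
\item the interval $[i,|p|)$ disjoint from $G(y^j) \cup C(y^j)$ and from $C(v^j)$ for every $j$ (so in particular $y^j$ and $v^j$ are $0$ on this interval), and
\item $|G(y^j)\cap i| \equiv \ell_j \pmod{n_j}$ for every $j$, where $\ell_j \in n_j\setminus A^j$ has been fixed in advance (using that $A^j \subsetneq n_j$),
\end{itemize}
is dense. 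Mutual genericity of the original reals then yields a condition in $E$ that is an initial segment of the matrix, and an immunity argument closes the proof.

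For the density of $E$, I would extend any given condition $q$ by first immunizing its $D$-columns via the product version of \autoref{prop:ImmunizationsExist}, then for each $j$ independently adjusting the $y^j$-column above the immunization height to bring the good-point count into the correct residue class modulo $n_j$, filling the other columns with $0$s as needed. A second round of immunization on the $D$-columns---with the $y^j$ and $v^j$ columns held at $0$ on the new rows, which preserves the residues---followed by a final $0$-padding of the $y^j$ and $v^j$ columns produces a condition in $E$.

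For the surgery step, a condition $p \in E$ which is an initial segment of the original generic matrix yields a post-surgery initial segment that agrees with the $D$-columns of $p$ on the interval $[i,|p|)$. Indeed, the disjointness from $C(v^j)$ and $C(y^j)$ means the grafts $u^j_k[w^j_k/v^j]$ and $(x^j_k)'[z^j_k/y^j]$ leave values unchanged on this interval, while the alignment $|G(y^j)\cap t| \equiv \ell_j \pmod{n_j}$ for $t \in [i,|p|)$ ensures that the only column made inactive by priming at such a $t$ is $\ell_j \notin A^j$, so for $k \in A^j$ the primed value $(x^j_k)'(t)$ equals the original $x^j_k(t)$. The $(i,D)$-immunity of $p$'s $D$-columns then places the post-surgery condition into $D$, showing that the claimed family meets every dense open set in the target product forcing.

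The main obstacle I expect is the simultaneous alignment of the $J$-many residue conditions while preserving $D$-column immunity; this is what forces the density argument to iterate immunization and residue adjustment, though this is a routine bookkeeping exercise once set up. Otherwise the argument is a direct generalization of the single-row case, exactly as the authors suggest.
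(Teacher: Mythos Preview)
Your proposal is correct and follows exactly the approach the paper indicates; the paper in fact omits the proof entirely, noting only that it is similar to that of \autoref{lemma:PrimingPreservesMutualGenerics}, and your argument is the natural multi-row generalization of that proof. One small remark: the first round of immunization in your density argument is not actually needed---you can adjust the $y^j$-residues first and then immunize once---but the redundancy is harmless.
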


Before we give the main theorem of this section, let us present an easier version that we will
use as a building block in the coming proof.

\begin{theorem}
\label{thm:MutableBlockchainBaby}
	Let $n$ be a natural number and let	$A\subseteq n$. Let $y$ be a Cohen real over $M$.
	Then there are amalgamable Cohen reals $c_0,\dotsc,c_{n-1}$ over $M$
	and a real $z$ such that the family $\set{c_k[z/y]}{k\in B}$ is nonamalgamable
	if $A\subseteq B$ and mutually generic otherwise.
\end{theorem}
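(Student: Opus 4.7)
The plan is to construct the $c_k$'s by priming Cohen reals indexed by $A$ with respect to $y$, leaving the rest unprimed, and to let $z$ be a catastrophic real for $M$. Enumerate $A=\{k_0,\dotsc,k_{m-1}\}$ with $m:=|A|$ (we may assume $m\geq 2$, as otherwise the nonamalgamability clause is either vacuous or trivially violated). Using countability of $M[y]$, pick mutually generic Cohen reals $x_0,\dotsc,x_{m-1}$ together with $u_k$ for $k\in n\setminus A$ over $M[y]$. Set $c_{k_j}:=x_j'$, the priming of $x_j$ with respect to $y$ (using $m$ in place of $n$ in the priming definition), and $c_k:=u_k$ for $k\notin A$; let $z$ be any catastrophic real for $M$. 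The $c_k$'s all live in the common extension $M[y,\vec{x},\vec{u}]$, so they are amalgamable, and each is individually Cohen over $M$ by \autoref{lemma:PrimingPreservesMutualGenerics} applied with a singleton primed index family (for the primed ones) or with a single unprimed real (for the rest).

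For the mutual-genericity direction, when $A\not\subseteq B$ there is some $k^*\in A\setminus B$, so the primed indices appearing in $B$ form the strict subset $B\cap A\subsetneq A$. Applying \autoref{lemma:PrimingPreservesMutualGenerics} to this strict subset of primed reals together with the unprimed reals $\set{u_k}{k\in B\setminus A}$, all grafted with $z$ via $C(y)$, yields that $\set{c_k[z/y]}{k\in B}$ is mutually $M$-generic, hence amalgamable.

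For the nonamalgamability direction, suppose $A\subseteq B$ and let $N$ be a model in the generic multiverse of $M$ amalgamating $\set{M[c_k[z/y]]}{k\in B}$; then $N$ contains the primed subfamily $\set{c_k[z/y]}{k\in A}$. By the priming construction, at every position $i$ the tuple $(c_k(i))_{k\in A}$ is all $1$s iff $i\in G(y)\cup(C(y)\cap\{z\text{-bit}=1\})$; it is all-equal exactly on $G(y)\cup C(y)$, up to rare coincidences at regular positions where all active columns happen to vanish; and at every other position exactly one column has value $0$, namely the inactive column, which is always in $A$ by our cycling modulo $m$. Combining these patterns at consecutive rows---in particular noting that $i\in G(y)$ exactly when $(c_k(i))_{k\in A}$ is all $1$s and $(c_k(i+1))_{k\in A}$ is all-equal---$N$ identifies $G(y)$, hence $C(y)$, and then reads off the bits of $z$ at the $C(y)$-positions. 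But $z$ is catastrophic for $M$, contradicting $N$'s presence in its generic multiverse, so no such $N$ exists.

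The main technical delicacy is ensuring the decoding of $G(y)$ is genuinely unambiguous: a $C(y)$-position with $z$-bit $1$ could in principle mimic the pattern of a $G(y)$-position if the adjacent regular row coincidentally has all active-column values equal to $0$. I expect this is resolvable by exploiting the rigidity of the block structure---for instance noting that two consecutive all-$1$s positions can only arise from a genuine $G(y)$ immediately followed by a $C(y)$ with $z$-bit $1$, together with analogous analyses of longer patterns---but the details warrant careful checking.
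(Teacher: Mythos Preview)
Your approach is essentially the paper's: prime the $A$-indexed reals with respect to $y$, leave the others unprimed, take $z$ catastrophic, and invoke \autoref{lemma:PrimingPreservesMutualGenerics} for both the individual Cohen-ness and the mutual-genericity clause. The only substantive divergence is in the decoding step for nonamalgamability, and here you have made it harder than necessary.

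The paper's decoding uses only the all-$1$s set $S:=\{i:\forall k\in A,\ c_k[z/y](i)=1\}$. The crucial observation is that at every regular position $i\notin G(y)\cup C(y)$ the inactive column is $0$, so $i\notin S$; hence $S=G(y)\cup\{i\in C(y):z\text{-bit at }i\text{ is }1\}$ exactly, with no coincidental extras. From $S$ one recovers $G(y)$ unambiguously: every $i\in C(y)\cap S$ has $i-1\in G(y)\subseteq S$, while $G(y)$ contains no two consecutive integers, so in each maximal interval of $S$ the leftmost element lies in $G(y)$ and the elements alternate $G(y),C(y),G(y),\dotsc$ thereafter. This yields $G(y)$, then $C(y)=G(y)+1$, then $z$.

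Your version instead tries to use the ``all-equal'' criterion to detect $G(y)\cup C(y)$, and this is exactly where the spurious positions you worry about creep in: at a regular row the active columns may all happen to be $0$, matching the inactive one. (Relatedly, your claim that ``exactly one column has value $0$'' at regular positions is false---the active columns carry the original $x_k(i)$ and can be $0$---though for the all-$1$s test this only helps.) So the ``technical delicacy'' you flag is an artefact of the wrong detector; switch to the all-$1$s set and the decoding is clean, with no residual case analysis needed.
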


\begin{proof}
	Let $x_0,\dotsc,x_{n-1}$ be mutually generic Cohen reals over $M[y]$ and let $z$
	be catastrophic for $M$. Let $\set{c_k}{k\in A}$ be the primed versions of
	$\set{x_k}{k\in A}$ with respect to $y$ and let $c_k:=x_k$ if $k\notin A$.
	The reals $\set{c_i}{i<n}$ are Cohen over $M$ by \autoref{lemma:PrimingPreservesMutualGenerics},
	and they are clearly amalgamable since they all live in
	$M[x_0,\dotsc,x_{n-1},y]$. Note that, given the set $\set{c_k[z/y]}{k\in A}$,
	it is simple to decode $z$: Find those $i$ for which $c_k[z/y](i)=1$ for
	all $k\in A$ and recover $C(y)$ from that information, which then give $z$. Therefore,
	the family $\set{c_k[z/y]}{k\in B}$ is not amalgamable if $A\subseteq B$.
	On the other hand, if $A\nsubseteq B$, \autoref{lemma:PrimingPreservesMutualGenerics}
	implies that $\set{c_k[z/y]}{k\in B}$ are mutually generic Cohen reals.
\end{proof}

\begin{theorem}
\label{thm:MutableBlockchain}
	Let $n$ be a natural number. 
	Then there are amalgamable Cohen reals $c_0,\dotsc, c_{n-1}$ over $M$ and another real $y$ such 
	that, for any $\mathcal{A}\subseteq\pow(n)$ containing all singletons and closed under subsets,
	there is a real $z_{\mathcal{A}}$ such that
	$\set{c_k\graft_y z_{\mathcal{A}}}{k\in A}$ is amalgamable if and only if 
	$A\in\mathcal{A}$.
\end{theorem}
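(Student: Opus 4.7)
The plan is to extend the baby construction in \autoref{thm:MutableBlockchainBaby} by running it in parallel on disjoint infinite pieces of $\omega$, one per potential obstacle set. I would enumerate the subsets $B_1,\dots,B_N$ of $\{0,\dots,n-1\}$ of size at least $2$---these are the only candidate minimal obstacles for any $\mathcal{A}$ containing all singletons and closed under subsets---and partition $\omega$ into corresponding infinite pieces $I_1,\dots,I_N$. Starting from mutually generic Cohen reals $x_0,\dots,x_{n-1},y$ over $M$, I would define $c_k$ piecewise: on each $I_j$, let $c_k\rest{I_j}$ be (reindexed) the appropriate primed version of $\{x_\ell\rest{I_j}:\ell\in B_j\}$ with respect to $y\rest{I_j}$ when $k\in B_j$, and let $c_k\rest{I_j}=x_k\rest{I_j}$ otherwise. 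Each $c_k$ is Cohen over $M$ and the family is amalgamable in $M[\vec{x},y]$, which follows from \autoref{lemma:PrimingPreservesMutualGenericsBetter} applied to the pieces.

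For a given $\mathcal{A}$, fix a catastrophic real $r$ for $M$ and define $z_{\mathcal{A}}$ to have $m$-th bit equal to $r(m)$ whenever the $m$-th coding point of $y$ falls in a piece $I_j$ with $B_j\notin\mathcal{A}$, and $0$ otherwise. For $A\notin\mathcal{A}$, pick a minimal obstacle $B\subseteq A$; then $B=B_j$ for some $j$ and on piece $I_j$ all $|B_j|$ primed columns appear in $\{c_k\wr_y z_{\mathcal{A}}:k\in A\}$. The good points of $y\rest{I_j}$ are exactly the positions where all these primed columns take the value $1$---at inactive positions the modular priming rule forces one of the $|B_j|$ columns to $0$---so the coding points are recoverable and reading off $z_{\mathcal{A}}$'s bits there reconstructs $r$, ruling out amalgamation. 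For $A\in\mathcal{A}$, on each piece $I_j$ either $B_j\notin\mathcal{A}$, forcing $B_j\nsubseteq A$ by subset-closure and hence giving mutual genericity on $I_j$ via \autoref{lemma:PrimingPreservesMutualGenerics} (whose conclusion is insensitive to which reals are being grafted), or $B_j\in\mathcal{A}$, in which case $z_{\mathcal{A}}$ is $0$ on the relevant bits and the grafted reals on $I_j$ live in $M[\vec{x},y]$. Combining across pieces exhibits a common forcing extension containing $\{c_k\wr_y z_{\mathcal{A}}:k\in A\}$.

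The main technical obstacle I expect is aligning the global coding points $C(y)$ used in the graft $\wr_y$ with the local coding points of $y\rest{I_j}$ used for priming on each piece, since a good point of $y$ may straddle a piece boundary. To resolve this, I would choose the pieces as unions of long consecutive blocks assigned round-robin to $B_1,\dots,B_N$, so that boundary good points form a sparse set that can be absorbed by the decoding argument; alternatively, one can work with separate per-piece Cohen reals $y^1,\dots,y^N$ and assemble them into a single $y$ along a carefully chosen partition, forcing the global and local coding points to agree on each piece by construction. Either route reduces the verification to the per-piece baby argument together with an invocation of \autoref{lemma:PrimingPreservesMutualGenericsBetter} to combine the pieces.
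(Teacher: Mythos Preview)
Your approach is essentially the paper's: run the baby construction in parallel, one copy per potential obstacle $B\subseteq n$ with $|B|\geq 2$, and assemble the pieces by a join. The paper implements this via your option~(b) from the outset---it takes an independent family of mutually generic reals $\{x_k^A,y^A:k<n,\ A\subseteq n,\ |A|\geq 2\}$, primes each row with respect to its own $y^A$, and sets $c_k=\bigoplus_A c_k^A$. The alignment issue you flag is then dissolved by a small but important trick: the paper defines $y:=\bigoplus_A C(y^A)$, i.e.\ the global $y$ is literally the (interleaved) set of per-row coding points, so the global graft $c_k\graft_y z_{\mathcal{A}}$ decomposes exactly as $\bigoplus_A c_k^A[z^A/y^A]$ with no boundary effects. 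Your option~(a) can presumably be made to work, but it is more delicate and unnecessary once you see this.

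For the amalgamable direction when $A\in\mathcal{A}$, your sketch is right in outline but vague about the model you work over. The paper makes this precise: set $M^A=M[x_l^D,y^D\mid D\in\mathcal{A},\ l\in A]$; every piece with $B\in\mathcal{A}$ contributes reals already in $M^A$, and for $B\notin\mathcal{A}$ one has $B\setminus A\neq\varnothing$, so \autoref{lemma:PrimingPreservesMutualGenericsBetter} applied \emph{over $M^A$} shows the remaining grafted reals are mutually generic over $M^A$. A single Cohen extension of $M^A$ then amalgamates everything.
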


While this theorem gives a similar result as Mostowski's \autoref{thm:Mostowski}, the conclusion
is slightly stronger. In particular, we now obtain a fixed finite family of reals $c_k$ which,
by judicious use of surgery, can be modified to witness any desired pattern of amalgamability.
Furthermore, as will be apparent from the proof, the real $y$ can be chosen to be quite sparse
(in terms of asymptotic density, say), so that the ``postoperative'' reals will resemble
the ``preoperative'' ones as closely as desired.

\begin{proof}
	Let $\set{x_k^A,y^A}{k<n, A\subseteq n, |A|\geq 2}$ be mutually generic Cohen reals over $M$.
	Let $\set{c_k^A}{k\in A}$ be the primed versions of $\set{x_k^A}{k\in A}$ with respect
	to $y^A$ and let $c_k^A:=x_k^A$ for $k\notin A$. For each $k<n$ let
	$c_k:=\bigoplus_{A} c_k^A$ be the join, obtained by interleaving bits for example; we also
	let $y=\bigoplus_A C(y^A)$.
	Essentially we prepared a matrix of Cohen reals, with one row for each subset $A\subseteq n$,
	and priming the reals in the $A$-th row with indices in $A$.	We claim these $c_k$,
	the joins of the columns of this matrix, are as desired.
	
	Let us first see that each $c_k$ is Cohen over $M$.
	Let $A_0,A_1,\dotsc,A_N$ be an enumeration of subsets of $n$ of size at least $2$.
	Recall that $\set{x_k^A,y^A}{A\subseteq n, |A|\geq 2}$ are mutually generic over $M$.
	In particular, the reals $x_0^{A_0},\dotsc,x_{n-1}^{A_0},y^{A_0}$ are mutually generic
	over $M[x_k^{A_1},y^{A_1},\dotsc,x_k^{A_N},y^{A_N}]$. Passing to the primed versions,
	\autoref{lemma:PrimingPreservesMutualGenerics} tells us that any subfamily of
	$\set{c_\ell^{A_0}}{\ell<n}$ which omits at least one $c_\ell^{A_0}$ for $\ell\in A_0$
	remains mutually generic over that model. In particular, $c_k^{A_0}$ is
	Cohen over $M[x_k^{A_1},y^{A_1},\dotsc,x_k^{A_N},y^{A_N}]$.	
%	It then follows from \autoref{lemma:PrimingPreservesMutualGenerics} that
%	$c_k^{A_0}$ is Cohen over $M[x_k^{A_1},y^{A_1},\dotsc,x_k^{A_N},y^{A_N}]$. 
	Notice that	$c_k^{A_j}$ is in this model for each $j>0$. Similarly,
	$c_k^{A_1}$ is Cohen over $M[x_k^{A_2},y^{A_2},\dotsc,x_k^{A_N},y^{A_N}]$, and
	this model contains all of the reals $c_k^{A_j}$ for $j>1$. Repeating this step,
	we see that the reals $c_k^{A_0},\dotsc,c_k^{A_N}$ are mutually generic over $M$.
	Therefore their join $c_k$ is also Cohen over $M$. Furthermore, as we have said, the model 
	$M[x_k^{A_j},y^{A_j}\mid j\leq N]$ has all of the reals $c_k^{A_j}$,
	and therefore also $c_k$. Therefore all of the reals $c_k$ appear in the model
	$M[x_k^A,y^A\mid k<n, A\subseteq n, |A|\geq 2]$, and are thus amalgamable.
	
	Now fix a family $\mathcal{A}$ as in the statement of the theorem. Notice that, for a fixed
	set of indices $A$, the reals $c_k^A$ have been prepared exactly as in the proof of
	\autoref{thm:MutableBlockchainBaby}. Arguing as in that proof, there is, for each
	$A\in\pow(n)\setminus\mathcal{A}$, a real $z^A$ such that $\set{c_k^A[z^A/y^A]}{k\in B}$ 
	is nonamalgamable if
	$A\subseteq B$ and mutually generic otherwise. For $A\in\mathcal{A}$ let
	$z^A\equiv 0$. Now let 
	$z_{\mathcal{A}}:=\bigoplus_A z^A$,	so that $c_k\graft_y z_{\mathcal{A}}= \bigoplus_{A} c_k^A[z^A/y^A]$.
	
	If $A\notin\mathcal{A}$ then $\set{c_k\graft_y z_{\mathcal{A}}}{k\in A}$
	is not amalgamable, since from these reals we could compute
	$\set{c_k^A[z^A/y^A]}{k\in A}$ and we know these reals are not amalgamable.
	
	Now consider the situation when $A\in\mathcal{A}$. 
	Let us write $M^A=M[x^D_l,y^D\mid D\in\mathcal{A},l\in A]$.
	On the one hand, note that for any $B\in\mathcal{A}$ and any $k\in A$ we have
	$c^B_k[z^B/y^B]\in M^A$.
	On the other hand, applying \autoref{lemma:PrimingPreservesMutualGenericsBetter} over $M^A$ shows that, 
	since $B\setminus A\neq\varnothing$ for any $B\notin\mathcal{A}$,
	the reals in $\set{c^B_k[z^B/y^B]}{B\notin\mathcal{A},k\in A}$ are mutually generic over $M^A$.
	It follows that there is a single Cohen extension of $M^A$ containing all the reals in
	$\set{c^B_k[z^B/y^B]}{B\subseteq n,k\in A}$, and so the reals
	$c_k\graft_y z_{\mathcal{A}}$ are amalgamable over $M$.
\end{proof}

We should point out that the reals $\set{c_k\graft_y z_{\mathcal{A}}}{k\in A}$ will not 
be mutually generic, even when $A\in\mathcal{A}$. 
If that were the case then, taking into account how $c_k, y,$ and $z_{\mathcal{A}}$ were built, 
the family $\set{c_k^A[z^A/y^A]}{k\in A}$ would also have to be mutually generic.
But note that $z^A\equiv 0$, so that $c_k^A[z^A/y^A]$ is just equal to $c_k^A$,
and we have already noted that an entire primed family $\set{c_k^A}{k\in A}$ cannot be mutually
generic since it has no occurrences of two consecutive all-one rows.

A simple extension of the proof of \autoref{thm:MutableBlockchain} yields a sequence of Cohen reals 
$c_0,c_1,\dotsc$ that uniformly realizes all finite patterns of amalgamability.

\begin{theorem}
	There are amalgamable Cohen reals $c_0,c_1,\dotsc$ over $M$ and a real $y$ such that,
	for any natural number $n$ and any $\mathcal{A}\subseteq\mathcal{P}(n)$ containing all singletons
	and closed under subsets, there is a real $z_\mathcal{A}$ such that, for any $A\subseteq n$,
	the family $\set{c_k\graft_y z_\mathcal{A}}{k\in A}$ is amalgamable if and only if
	$A\in\mathcal{A}$.
\end{theorem}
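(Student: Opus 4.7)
The plan is to reproduce the construction of \autoref{thm:MutableBlockchain} uniformly over all finite $A \subseteq \omega$ with $|A| \geq 2$, producing a single matrix of Cohen reals whose joins serve for every $n$ at once. Enumerate in $M$ all finite $A \subseteq \omega$ with $|A| \geq 2$ and let $\set{x^A_k, y^A}{k<\omega, A \text{ as above}}$ be mutually generic Cohen reals over $M$ (only countably many reals are needed, so this family exists). For each $A$ prime $\set{x^A_k}{k \in A}$ with respect to $y^A$ to obtain $\set{c^A_k}{k \in A}$, and set $c^A_k := x^A_k$ for $k \notin A$. Then put $c_k := \bigoplus_A c^A_k$ and $y := \bigoplus_A C(y^A)$, with the latter join organised so that the $A$-slot of $y$ records exactly the coding points of $y^A$.

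Now, given $n$ and $\mathcal{A} \subseteq \pow(n)$, let $z^A$ be catastrophic for $M$ whenever $A \subseteq n$ and $A \notin \mathcal{A}$, and let $z^A \equiv 0$ otherwise; set $z_\mathcal{A} := \bigoplus_A z^A$. Because the three joins defining $c_k$, $y$, and $z_\mathcal{A}$ use the same decomposition of $\omega$, one has the key identity
\[
c_k \graft_y z_\mathcal{A} = \bigoplus_A c^A_k[z^A/y^A].
\]
That the $c_k$ are amalgamable Cohen reals over $M$ follows exactly as in the proof of \autoref{thm:MutableBlockchain}: \autoref{lemma:PrimingPreservesMutualGenerics} is applied inductively along the enumeration of the $A$'s to show that each $c^A_k$ is Cohen over the model generated by the previously handled columns, hence each $c_k$ is Cohen over $M$, and all of them are visible in $M[x^A_k, y^A : k, A]$.

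The verification of the equivalence is then a direct lift of the finite case. If $A \subseteq n$ and $A \notin \mathcal{A}$, projecting onto the $A$-slot of the joins recovers $\set{c^A_k[z^A/y^A]}{k \in A}$, from which $z^A$ is decodable exactly as in \autoref{thm:MutableBlockchainBaby}, precluding amalgamation. Conversely, if $A \in \mathcal{A}$, set $M^A := M[x^B_l, y^B : l \in A, B \text{ any}]$. For $B \subseteq n$ with $B \notin \mathcal{A}$, closure of $\mathcal{A}$ under subsets gives $B \setminus A \neq \varnothing$, so \autoref{lemma:PrimingPreservesMutualGenericsBetter} over $M^A$ ensures that $\set{c^B_k[z^B/y^B]}{B \subseteq n, B \notin \mathcal{A}, k \in A}$ is mutually generic over $M^A$; for every other $B$, $z^B \equiv 0$ and $c^B_k[z^B/y^B] = c^B_k \in M^A$. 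Collecting the slots together yields a single forcing extension containing all of $\set{c_k \graft_y z_\mathcal{A}}{k \in A}$.

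The main point requiring care is the bookkeeping for the infinite join: the decomposition of $\omega$ into blocks, one per finite $A$, must be chosen in advance and used consistently by $c_k$, $y$, and $z_\mathcal{A}$, so that grafting $z_\mathcal{A}$ onto $c_k$ at the coding points of $y$ genuinely restricts in each slot to grafting $z^A$ onto $c^A_k$ at the coding points of $y^A$. Once this alignment is fixed, the rest of the argument is a faithful reduction, slot by slot, to the already-established finite case in \autoref{thm:MutableBlockchain}.
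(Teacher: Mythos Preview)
Your overall approach is exactly what the paper intends—index the construction of \autoref{thm:MutableBlockchain} over all finite $A\subseteq\omega$ with $|A|\geq 2$ rather than just subsets of a fixed $n$—and most of the argument carries over cleanly. However, your definition of $M^A$ in the amalgamability direction is wrong and breaks the key step. You set $M^A := M[x^B_l, y^B \mid l \in A,\ B \text{ any}]$, but with this choice, for a bad $B$ (that is, $B\subseteq n$ and $B\notin\mathcal{A}$) the reals $c^B_k$ for $k\in A$ and the coding set $C(y^B)$ already lie in $M^A$. Since $c^B_k[z^B/y^B]$ agrees with $c^B_k$ off $C(y^B)$, any forcing extension of $M^A$ containing $c^B_k[z^B/y^B]$ recovers the catastrophic real $z^B$ by reading the bits on $C(y^B)$. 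Hence these reals cannot be Cohen over $M^A$, and \autoref{lemma:PrimingPreservesMutualGenericsBetter} cannot possibly yield the mutual genericity you claim.

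The fix is to mirror the paper's choice in the proof of \autoref{thm:MutableBlockchain}: let $M^A$ adjoin only $x^B_l, y^B$ for those $B$ with $z^B\equiv 0$ (equivalently, $B\not\subseteq n$ or $B\in\mathcal{A}$). For such $B$ you still get $c^B_k[z^B/y^B]=c^B_k\in M^A$, while for the finitely many bad $B$ the reals $x^B_l, y^B$ remain mutually generic over $M^A$, the condition $B\setminus A\neq\varnothing$ holds as you note, and the lemma applies over $M^A$ to give mutual genericity of $\set{c^B_k[z^B/y^B]}{B\text{ bad},\ k\in A}$. With this correction the remainder of your argument goes through.
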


It is unclear whether the mutable blockchain argument can be modified to achieve
the kind of control over the intersections of the resulting generic extensions which would be
required to obtain a $*$-embedding into the generic multiverse, or whether it can accommodate
infinite patterns $\mathcal{A}$ (compare with the modifications going from \autoref{thm:Mostowski}
to \autoref{thm:GeneralBlockchain}).

\section{More Structure in the Cohen Multiverse}
\label{sec:MoreStructure}

In this section we would like to further discuss the structure of the generic multiverse, but more
specifically restricted to those extensions which may be obtained by adding a single Cohen real.
\autoref{thm:EmbedFamilyOfSetsIntoCohenMultiverse} and \autoref{cor:FinitePosetsEmbed} show that this
structure exhibits a large degree of universality---any finite poset $*$-embeds into it, along
with certain infinite ones, and the embedding can also be made to preserve infima. We believe these
results should be seen as analogous to the many theorems about the order structure of the Turing
degrees (or even the c.~e.\ degrees), 
which exhibits a similar kind of universality (although not quite in terms of $*$-embeddings).

It turns out that there are nontrivial restrictions on the infinitary structure of
the Cohen multiverse. The second author and Venturi~\cite[Theorem~13]{Hamkins2016:UpwardClosureAndAmalgamationInGenericMultiverse} 
showed that countable increasing chains in this structure have upper bounds (a similar but not
completely analogous result holds for the whole generic multiverse; see the aforementioned paper and also \cite{FuchsHamkinsReitz2015:SetTheoreticGeology}).

\begin{theorem}[Hamkins--Venturi]
\label{thm:CohenUpwardClosure}
	Let $\mathcal{C}=\set{M[c_n]}{n<\omega}$ be a family of Cohen extensions of $M$
	such that every finite subfamily of $\mathcal{C}$ has a least upper bound in the Cohen multiverse.
	Then $\mathcal{C}$ is amalgamable in the Cohen multiverse.
\end{theorem}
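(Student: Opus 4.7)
The idea is to use the finite-amalgamability hypothesis to build an increasing $\omega$-chain of Cohen extensions of $M$ that covers $\mathcal{C}$, and then recognize its union as a single Cohen extension of $M$ via an intermediate-model factorization.

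First I would extract the chain. For each $n<\omega$, let $M[d_n]$ be a least upper bound for the finite subfamily $\set{M[c_i]}{i\le n}$ in the Cohen multiverse. Since $M[d_{n+1}]$ is also an upper bound for the smaller family indexed by $i\le n$, minimality of $M[d_n]$ forces $M[d_n]\subseteq M[d_{n+1}]$. So we obtain an ascending chain $M\subseteq M[d_0]\subseteq M[d_1]\subseteq\cdots$ of Cohen extensions of $M$ with $\bigcup_n M[d_n]\supseteq\bigcup_n M[c_n]$, and it suffices to amalgamate this chain.

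Next I would factor each step as a Cohen extension. Writing $M[d_{n+1}]=M[c^\ast]$ for some Cohen real $c^\ast$ over $M$, the model $M[d_n]$ is an intermediate model between $M$ and $M[c^\ast]$. By the classical analysis of intermediate models of Cohen forcing (every nontrivial intermediate model of a Cohen extension is itself a Cohen extension, with Cohen quotient), there is a Cohen real $e_n$ over $M[d_n]$ with $M[d_{n+1}]=M[d_n][e_n]$. Iterating, each $e_n$ is Cohen-generic over $M[d_0][e_0]\cdots[e_{n-1}]=M[d_n]$, so the sequence $\seq{e_n}{n<\omega}$ is generic for the finite-support $\omega$-step iteration of Cohen forcing over $M[d_0]$. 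Since $\Add(\omega,\omega)$ is forcing-equivalent to $\Add(\omega,1)$, the union $\bigcup_n M[d_n]=M[d_0][\seq{e_n}{n<\omega}]$ is a single Cohen extension of $M[d_0]$; composing with the original Cohen extension $M\subseteq M[d_0]$ and using that two-step Cohen forcing is again Cohen, we get a single Cohen extension of $M$ amalgamating $\mathcal{C}$.

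The main obstacle is the factorization step: Lemma~\ref{lemma:SubsetEqualsForcingExtension} only provides \emph{some} forcing between $M[d_n]$ and $M[d_{n+1}]$, not an explicitly Cohen one. Upgrading this to Cohen forcing relies on a specific feature of $\Add(\omega,1)$---namely, that any nontrivial intermediate extension of a Cohen extension is again a Cohen extension---which is classical but external to the blockchain machinery developed in the paper. Once that is secured, the iteration/union argument and the final verification that the resulting Cohen extension contains every $M[c_n]$ are routine.
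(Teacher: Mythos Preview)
The paper does not actually prove this theorem---it is quoted from \cite[Theorem~13]{Hamkins2016:UpwardClosureAndAmalgamationInGenericMultiverse}---but the correct argument is visible in the opening paragraph of the proof of \autoref{thm:ExactPair}, and your proposal diverges from it at the crucial point.

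Your first two steps are fine and match the standard reduction: passing to an increasing chain $M[d_0]\subseteq M[d_1]\subseteq\cdots$ and factoring each step by a single Cohen real $e_n$ over $M[d_n]$ is exactly what the paper does at the start of the exact-pair construction. The gap is the sentence ``Iterating, each $e_n$ is Cohen-generic over $M[d_0][e_0]\cdots[e_{n-1}]=M[d_n]$, so the sequence $\seq{e_n}{n<\omega}$ is generic for the finite-support $\omega$-step iteration.'' Step-by-step genericity does \emph{not} imply full $\Add(\omega,\omega)$-genericity. For a concrete failure, pick $e_0$ Cohen over $M$ and then, for each $n\ge 1$, pick $e_n$ Cohen over $M[e_0,\dots,e_{n-1}]$ from the cone below the one-bit condition $\langle e_0(0)\rangle$, so that $e_n(0)=e_0(0)$ for all $n$. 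Each $e_n$ is genuinely Cohen over the previous model, yet the sequence misses the dense set $\{p\in\Add(\omega,\omega):\exists n\ne m\ p(n,0)\ne p(m,0)\}\in M$. Taking $M[d_n]=M[e_0,\dots,e_n]$ shows that your argument, applied verbatim, breaks for a legitimate instance of the theorem; in particular $\bigcup_n M[d_n]$ need not be a forcing extension of $M$ and need not even satisfy \ZFC. The step you flagged as the ``main obstacle'' (the Cohen quotient factorization) is not the problem; the step you treated as routine is.

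The missing idea, carried out in the proof of \autoref{thm:ExactPair}, is that one does not take the given $e_n$'s as the columns of the amalgamating generic. Instead one builds a fresh $\Add(\omega,\omega)$-generic over $M$ by an external $\omega$-stage construction whose $n$-th column is only a \emph{finite modification} of $e_n$. At each stage finitely many columns have been filled; since these are finite modifications of finitely many mutually generic Cohen reals, they are themselves mutually generic over $M$ and hence meet the projection to those columns of the next dense $D\subseteq\Add(\omega,\omega)$; one then extends by finitely many bits into finitely many new columns to land in $D$, and afterwards completes each newly touched column with the corresponding $e_n$. The resulting real is $\Add(\omega,\omega)$-generic over $M$ by construction, and the associated Cohen extension contains every $M[d_n]$ (each $e_n$ is recovered up to a finite change), hence every $M[c_n]$.
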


% MH: Joel states this theorem for finitely mutually generic families. Is having a sup the same
% as being mutually generic?
% Well, no, since they can have parts in common, but one can refine to a mutually generic family as we do here (and as I do in that paper.) -JDH

On the other hand, $*$-embeddings, as we have defined them, do not capture this infinitary
behaviour very well. For example, the reader will quickly convince herself that
the inclusion of $\omega$ into $\omega+1$ is a $*$-embedding. Moreover, any
ideal on a set $X$ (such as the ideal of finite subsets) $*$-embeds into the full power set 
$\mathcal{P}(X)$, regardless of which (infinitary) upper bounds exist in the ideal.
It follows that any ideal on $\omega$ $*$-embeds into the Cohen multiverse, even though the
ideal will typically not have the finite-obstacle property that we required in our theorems.

% MH: I am not sure if this makes sense at all.

To better explore this aspect of the structure of the Cohen multiverse, one might wish to
work with a stronger form of $*$-embedding.

\begin{definition}
	Let $f\colon P\to Q$ be a $*$-embedding between posets. We say that $f$ is a
	\emph{strong $*$-embedding} if, whenever $X\subseteq P$ does not have an upper bound 
	in $P$, its image $f[X]$ also does not have an upper bound in $Q$.
\end{definition}

It is easy to see that if $\mathcal{A}$ is a family of sets as in \autoref{thm:GeneralBlockchain}
then any $*$-embedding of $\mathcal{A}$ already is a strong $*$-embedding. Therefore
the $*$-embeddings obtained in \autoref{thm:EmbedFamilyOfSetsIntoCohenMultiverse} and
\autoref{cor:FinitePosetsEmbed} can be taken to be strong $*$-embeddings. On the other hand,
it follows from \autoref{thm:CohenUpwardClosure} that the finite-obstacle property
is a necessary condition for a family $\mathcal{A}$ to strongly
$*$-embed into the Cohen multiverse in any way similar to what we have seen.

\begin{proposition}
	Let $I\in M$ and let $\mathcal{A}\in M$ be a family of subsets of $I$, closed under subsets in $M$. Suppose that $\mathcal{A}$ strongly $*$-embeds into the Cohen multiverse via a
	$*$-embedding that preserves finite suprema.
	Then $\mathcal{A}$ is defined by finite obstacles in $M$.
\end{proposition}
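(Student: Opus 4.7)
The plan is to show that the natural collection of obstacles, namely $\mathcal{B} := \set{F \subseteq I}{F \text{ finite and } F \notin \mathcal{A}}$ (which belongs to $M$), defines $\mathcal{A}$. One direction is automatic from the closure of $\mathcal{A}$ under subsets: if $A \in \mathcal{A}$ contained some $F \in \mathcal{B}$ as a subset, then $F$ itself would be in $\mathcal{A}$, contradicting $F \in \mathcal{B}$. The content of the proposition is the converse, namely that whenever $B \subseteq I$ has the property that every finite subset of $B$ lies in $\mathcal{A}$, then $B$ itself lies in $\mathcal{A}$.

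I would prove this by contradiction. Suppose $B \in M$ with $B \subseteq I$, $B \notin \mathcal{A}$, and every finite subset of $B$ in $\mathcal{A}$. Since $M$ is countable in $V$, so is $B$, and I may fix an external enumeration $B = \set{b_n}{n<\omega}$ and set $B_n := \{b_0, \dotsc, b_{n-1}\}$. Each $B_n$ is a finite subset of $B$, hence an element of $M$ and, by hypothesis, of $\mathcal{A}$; moreover, $B_0 \subseteq B_1 \subseteq \dotsb$ with union $B$.

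Now consider the image of this increasing chain under the strong $*$-embedding $f$. The images $f(B_n)$ form an increasing chain in the Cohen multiverse. For any finite subfamily $\{f(B_{n_1}), \dotsc, f(B_{n_k})\}$ with $n_1 < \dotsb < n_k$, the element $B_{n_k}$ is the supremum of $\{B_{n_1}, \dotsc, B_{n_k}\}$ in $\mathcal{A}$, and since $f$ preserves finite suprema, $f(B_{n_k})$ is a least upper bound of the image in the Cohen multiverse. By the Hamkins--Venturi theorem (\autoref{thm:CohenUpwardClosure}), the whole chain $\set{f(B_n)}{n<\omega}$ then has an upper bound in the Cohen multiverse.

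On the other hand, the chain $\set{B_n}{n<\omega}$ admits no upper bound in $\mathcal{A}$: any $A \in \mathcal{A}$ containing all $B_n$ would contain $B = \bigcup_n B_n$, so by closure of $\mathcal{A}$ under subsets $B \in \mathcal{A}$, contradicting our assumption. Since $f$ is a \emph{strong} $*$-embedding, the nonexistence of an upper bound for $\set{B_n}{n<\omega}$ in $\mathcal{A}$ transfers to the nonexistence of an upper bound for $\set{f(B_n)}{n<\omega}$ in the Cohen multiverse, contradicting the previous paragraph. The main obstacle in this argument is really only verifying that the Hamkins--Venturi theorem applies, which becomes routine as soon as one uses the hypothesis that $f$ preserves finite suprema to turn the chain condition ``every finite subfamily has a least upper bound'' into a triviality.
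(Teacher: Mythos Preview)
Your proof is correct and follows essentially the same approach as the paper's: reduce to showing that any $B$ all of whose finite subsets lie in $\mathcal{A}$ must itself lie in $\mathcal{A}$, then use the countability of $M$ together with the Hamkins--Venturi upward closure theorem and the strong $*$-embedding property to derive a contradiction. The only cosmetic difference is that the paper applies Hamkins--Venturi to the image of \emph{all} finite subsets of $B$ (noting this family is closed under suprema), whereas you pick a cofinal $\omega$-chain; both work for the same reason.
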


Note that the $*$-embeddings we constructed in \autoref{sec:Blockchain} (and any embedding which,
like those, is based on arrangements of product generics) were strong $*$-embeddings and
preserved finite suprema.

\begin{proof}
	Let $f$ be the strong $*$-embedding of $\mathcal{A}$ into the Cohen multiverse.
	Fix a set $B\in M$ all of whose finite subsets are in $\mathcal{A}$. We aim to show that
	$B$ is also in $\mathcal{A}$. 
	Since $M$ is countable, there are, from the point of view of $V$, only countably many finite subsets of $B$. 
	Let $\mathcal{C}$ be the range
	of $f$ restricted to the finite subsets of $B$. Note that $\mathcal{A}$
	is closed under unions of finite subsets of $B$, so, since $f$ preserves suprema,
	$\mathcal{C}$ is closed under suprema. The family $\mathcal{C}$ is countable, since
	$M$ is countable, therefore it follows from \autoref{thm:CohenUpwardClosure} that
	$\mathcal{C}$ is amalgamable in the Cohen multiverse. By the definition of a
	strong $*$-embedding we can thus conclude that $B\in\mathcal{A}$. So any subset of $I$ not in $\mathcal{A}$ must contain a finite set not in $\mathcal{A}$, and thus $\mathcal{A}$ is defined by finite obstacles. The set of obstacles can be taken as the $\subset$-minimal subsets of $I$ not in $\mathcal{A}$, and this set is in $M$.
\end{proof}

Continuing with the comparison of the Cohen multiverse and Turing degrees, we now turn
to the existence of suprema and infima. First of all, it is an easy consequence of the intermediate
model theorem that we can find a supremum of finitely many models in the generic multiverse as
long as they are amalgamable.

\begin{proposition}
	\label{prop:AmalgamableImpliesSup}
	A finite subset of the generic multiverse has a supremum if and only if it is amalgamable.
\end{proposition}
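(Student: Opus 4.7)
The plan is to handle the two directions separately, with the forward direction essentially trivial and the reverse relying on a combination of \autoref{lemma:SubsetEqualsForcingExtension}, Usuba's theorem on the \DDG, and the intermediate model theorem. For the forward direction: if the family $\{N_0,\dots,N_k\}$ has a supremum $S$ in the generic multiverse, then $S$ is in particular an upper bound, and hence (since upper bounds in the generic multiverse are exactly amalgamators, as noted in the discussion following the definition of amalgamability) the family is amalgamable.

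For the converse, suppose $\{N_0,\dots,N_k\}$ is amalgamable, and let $W$ be a model in the generic multiverse containing every $N_i$. My first step would be to apply \autoref{lemma:SubsetEqualsForcingExtension} to conclude that $W$ is a forcing extension of each $N_i$, so that each $N_i$ is a ground of $W$. I would then apply Usuba's \DDG inside $W$ to the finite family of grounds $\{N_0,\dots,N_k\}$ to obtain a single common ground $N$ of $W$ sitting below all of them. Thus $N \subseteq N_i \subseteq W$ for every $i$, and $N_i = N[G_i]$ for some $N$-generic filter $G_i$ which is an element of $W$.

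The proposed supremum is then $S := N[\vec G]$, where $\vec G = \langle G_0,\dots,G_k\rangle$ is the finite sequence of generics. Since $N \subseteq S \subseteq W$ and $W$ is a forcing extension of $N$, the intermediate model theorem (Jech, Lemma~15.43) applies and yields that $S$ is itself a forcing extension of $N$, so $S$ belongs to the generic multiverse. By construction, $G_i\in S$ and hence $N_i = N[G_i] \subseteq S$, so $S$ is an upper bound. To see that it is the least upper bound, suppose $N'$ is any upper bound in the generic multiverse. Then $N \subseteq N_i \subseteq N'$ and $G_i \in N_i \subseteq N'$ for each $i$, so $N'$ contains both $N$ and the sequence $\vec G$; since $S$ is the smallest transitive model of \ZFC containing these (being a forcing extension of $N$ by $\vec G$), we get $S \subseteq N'$.

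The principal subtlety, as I see it, is the verification that $S = N[\vec G]$ is a genuine member of the generic multiverse, and this is precisely what the intermediate model theorem delivers cleanly. Note as well that finiteness of the family is crucial: it is what allows us to bundle the $G_i$ into a single set $\vec G$ and hence to realize $S$ as a single-step forcing extension of $N$. For infinite families this step can fail, and indeed the paper's later results on exact pairs show that suprema need not exist even for amalgamable countable chains.
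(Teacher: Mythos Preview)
Your approach is essentially the same as the paper's: find a common ground via the \DDG, form the least model over that ground containing all the generics, and apply the intermediate model theorem to see that this model lies in the generic multiverse.

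There is, however, one step you gloss over. You write $S := N[\vec G]$ and then invoke the intermediate model theorem, but that theorem (Jech, Lemma~15.43) applies only once you already know that $S$ is a transitive model of \ZFC sitting between $N$ and $W$. You have not said what $N[\vec G]$ means precisely, nor verified that it satisfies Choice. The paper is careful here: it first replaces the generics by sets of ordinals, then builds the intermediate model explicitly as $\bigcup_{\alpha<\Ord^N} L(V_\alpha^N, G_0, G_1)^{W}$, and notes that Choice holds in this model precisely because the $G_i$ are sets of ordinals. Your argument is easily repaired along the same lines---since $W\models\ZFC$, the finite tuple $\vec G$ can be coded in $W$ as a single set of ordinals, and then $N[\vec G]$ (in the relative-constructibility sense) is a \ZFC model---but as written, the invocation of the intermediate model theorem is premature.
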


\begin{proof}
	Consider two models $M_0$ and $M_1$ in the generic multiverse. It follows from the
	\DDG (see \autoref{sec:intro}) that both of these two models are forcing extensions of
	a common ground model. We may assume without loss of generality that this common ground
	model is just $M$ and write $M_0=M[G_0]$ and $M_1=M[G_1]$ for some
	$M$-generic $G_0$ and $G_1$. We may also assume that $G_0$ and $G_1$ are sets
	of ordinals. Now suppose that $M_0$ and $M_1$ are amalgamable with upper bound
	$M_2$ and consider the model $M(G_0,G_1)=\bigcup_{\alpha<\Ord^M} L(V_\alpha^M,G_0,G_1)^{M_2}$.
	This is an inner model of $M_2$ and it satisfies Choice since $G_0$ and $G_1$ are
	sets of ordinals and therefore easily well-orderable. It then follows from the intermediate
	model theorem that $M(G_0,G_1)$ is a forcing extension of $M$, and it is also clearly
	the least upper bound for $M[G_0]$ and $M[G_1]$. 
\end{proof}

In contrast, it is known that countable increasing sequences of Turing degrees 
never have a supremum and that not all pairs of Turing degrees have an infimum .
These facts are usually established by showing the existence of \emph{exact pairs} of Turing degrees,
and the analogous argument will show that the same holds in the Cohen multiverse.

%\begin{definition}
%	Let $\mathcal{C}$ be a family of Cohen extensions of $M$, that is, each model in $\mathcal{C}$
%	is of the form $M[c]$ where $c$ is a Cohen real over $M$. We say that two Cohen extensions 
%	$M[d_0]$ and $M[d_1]$ form an \emph{exact pair} for $\mathcal{C}$ if each of them is an
%	upper bound for $\mathcal{C}$, and if any real $x\in M[d_0]\cap M[d_1]$ is contained in some
%	model in $\mathcal{C}$.
%\end{definition}
%
%MH: Do we want to phrase this more generally to say that no ZFC model fills the gap? Then it would make sense for other forcing notions as well.

\begin{definition}
	Let $\mathcal{E}$ be a family of forcing extensions of $M$. We say that two distinct forcing extensions
	$M[G]$ and $M[H]$ form an \emph{exact pair} for $\mathcal{E}$ if each of them is an
	upper bound for $\mathcal{E}$, and any model in the generic multiverse below both $M[G]$ and 
	$M[H]$ is below some model in $\mathcal{E}$.
\end{definition}

\begin{theorem}
\label{thm:ExactPair}
	Suppose that $M[c_0]\subseteq M[c_1]\subseteq\dots$ is a countable tower
	of Cohen extensions of $M$. Then there are Cohen extensions $M[d_0]$ and $M[d_1]$ forming an 
	exact pair for this tower.
\end{theorem}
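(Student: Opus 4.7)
The plan is to adapt Spector's classical exact-pair theorem for countable ascending chains in the Turing degrees to the Cohen multiverse over $M$. I would construct $d_0$ and $d_1$ in $V$ as the unions of descending sequences of Cohen conditions $p^0_s, p^1_s \in \Add(\omega,1)$ built by stages against a fixed $V$-enumeration of three kinds of requirements: (a)~meeting an $M$-dense open subset of $\Add(\omega,1)$, so each $d_i$ is Cohen-generic over $M$; (b)~coding a further segment of some $c_n$ into both $p^0_s$ and $p^1_s$, so that each $c_n \in M[d_0] \cap M[d_1]$; and (c)~reacting to a pair $(\sigma,\tau)$ of $\Add(\omega,1)$-names in $M$, to enforce the exact-pair property.

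For the coding in (b), I would reserve positions within a fresh segment and commit both $p^0_s$ and $p^1_s$ to agree there with the next yet-uncoded bits of $c_n$. Because each $c_n$ is itself Cohen over $M$, these committed bits do not obstruct meeting $M$-dense sets elsewhere. I keep track of the finite set $K_s \subseteq \omega$ of indices whose coding has been started by stage $s$, arranging that every $n$ enters $K_s$ eventually and that future coding positions always sit beyond the current length of $p^i_s$.

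For the splitting step (c), given $(\sigma,\tau)$ and current $(p_0, p_1)$, I would search in $V$ for an extension $(q_0, q_1) \leq (p_0, p_1)$ compatible with the coding commitments already in place, together with an element $a$, such that $q_0 \forces a \in \sigma$ while $q_1 \forces a \notin \tau$, or vice versa. If such a coded split is found, I take it, and route all subsequent coding to positions beyond those used by $(q_0, q_1)$; the final generics then satisfy $\sigma^{d_0} \neq \tau^{d_1}$. If no coded split exists below $(p_0, p_1)$, a standard density argument shows that every pair of coded completions of $(p_0, p_1)$ evaluates $\sigma$ and $\tau$ to the same value, so the common value depends only on the bits committed by stage $s$---that is, only on $\{c_n : n \in K_s\}$. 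Since $K_s$ is finite and the tower is nested, $\sigma^{d_0} = \tau^{d_1}$ lies in $M[c_k]$ for $k = \max K_s$. Applied to $M$-names for a set of ordinals generating an arbitrary $M[G]$ in the multiverse below both $M[d_0]$ and $M[d_1]$ (available by \autoref{lemma:SubsetEqualsForcingExtension}), this yields $M[G] \subseteq M[c_k]$.

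The main obstacle I expect is the careful coordination of coding and splitting stages, ensuring that the ``route-around'' convention for future coding preserves every split taken and, in the no-split case, that the value is really pinned down by $\{c_n : n \in K_s\}$ alone. This is the Cohen-forcing analogue of Spector's back-and-forth between coding and splitting, and is where the technical heart of the argument should lie.
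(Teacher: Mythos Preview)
Your overall strategy is the same as the paper's: a Spector-style exact-pair construction, interleaving genericity, coding, and splitting requirements. The paper also proceeds exactly this way, building $d_0$ and $d_1$ as $\omega$-by-$\omega$ matrices whose $n$-th columns are finite modifications of representatives of the tower, meeting dense subsets of $\Add(\omega,\omega)$ and splitting pairs of names.

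There is, however, a genuine gap in your proposal: you code the given reals $c_n$ directly, whereas the paper first replaces them by a sequence $c_n'$ that is \emph{finitely mutually generic} (obtained by iteratively applying the intermediate model theorem and the fact that nontrivial quotients of Cohen forcing are again Cohen). This is not a technicality. If you code the $c_n$ at positions fixed in advance in $M$ (say via a recursive partition of $\omega$, which is what is needed for $M[d_i]$ to recover them), then the columns of $d_i$ are essentially the $c_n$, and your claim that ``these committed bits do not obstruct meeting $M$-dense sets'' fails: for instance, if the tower is eventually constant, or more generally if the $c_n$ carry definable dependencies, then the assembled real $d_i$ will miss easy dense sets and will not be Cohen over $M$. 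Your justification ``because each $c_n$ is itself Cohen over $M$'' only gives genericity coordinate by coordinate, not the joint genericity needed here. Conversely, if you let the coding positions float with the construction (``within a fresh segment'', as you write), then you do get genericity of $d_i$, but now the set of positions coding $c_n$ is produced by an external recursion in $V$ and there is no reason for it to lie in $M[d_i]$, so you have not shown $c_n\in M[d_i]$.

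The fix is exactly the paper's first paragraph: pass to $c_n'$ with $M[c_n]=M[c_0',\dots,c_n']$ and the $c_n'$ mutually generic; then fixed-position coding works because any finite block of columns is already $\Add(\omega,N)$-generic, and the no-split analysis reduces cleanly to showing the common value is decided by the first $N$ columns, hence lies in $M[c_{N-1}]$. Once you insert this preparatory step, your sketch and the paper's argument coincide.
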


% MH: Does this proof need more details?
\begin{proof}
	First we replace the Cohen reals $c_n$ with a different family $c_n'$ giving the same
	models, but with the additional property that any finitely many of the $c_n'$ are mutually generic.
	We begin by letting $c_0'=c_0$.
	By the intermediate model theorem, $M[c_1]$ is a forcing extension of $M[c_0]$ by a quotient
	of the forcing $\Add(\omega,1)$. But any such quotient is itself equivalent to $\Add(\omega,1)$
	and so there is a Cohen real $c_1'$ over $M[c_0]=M[c_0']$ such that $M[c_1]=M[c_0',c_1']$.
	Continuing in this way we can find all of the reals $c_n'$ as required.
	
	We will build the exact pair by filling in two $\omega$-by-$\omega$ matrices with $0$s and $1$s
	in $\omega$ many steps. Throughout the construction we will maintain the requirement
	that, in each matrix, finitely many of the leftmost columns have been completely filled
	and the $n$-th such column differs at most finitely much from the corresponding real $c_n'$,
	and that beyond these columns the matrix is empty.
	In particular, at any stage of the construction, the matrices will essentially consist of
	finitely many mutually generic Cohen reals.
	
	In our construction we will run through a fixed enumeration of all dense subsets of
	$\Add(\omega,\omega)$, as well as all pairs of $\Add(\omega,\omega)$-names for
	reals in $M$. So suppose that we are at some stage of the construction and we are
	handed a dense set $D$ and a pair of names $\sigma$ and $\tau$. Suppose
	that, in each matrix, the first $N$ many columns have been filled. Since these
	columns are mutually generic, they meet the projection of $D$ onto the first
	$N$ many coordinates. It follows that we can extend each matrix by finitely many
	bits to meet the whole dense set $D$. Now consider the two names $\sigma$ and
	$\tau$, or rather, consider the two $\Add(\omega,\omega\setminus N)$-names
	$\sigma'$ and $\tau'$ obtained by partially evaluating $\sigma$ and $\tau$
	by the first $N$ columns of the respective matrices. Let $p$ and $q$
	be the finite parts of the two matrices, seen as conditions in 
	$\Add(\omega,\omega\setminus N)$. If there is a number $k\in\omega$
	such that there are extensions $p'\leq p$ and $q'\leq q$ which
	force $\check{k}\in \sigma'$ and $\check{k}\notin\tau'$ (or vice versa),
	respectively, we extend our matrices by these conditions; otherwise we do nothing.
	Note that if such a $k$ does not exist then $p$ and $q$ already decide
	all formulas $\check{k}\in\sigma'$ and $\check{k}\in\tau'$ and decide them
	in the same way. Finally, we consider each partially filled column in our matrices and complete
	it to (almost) match the corresponding real $c_n'$. This finishes this step of the construction.
	
	At the end of the construction let $d_0$ and $d_1$ be the reals represented by
	the two matrices we built. It is clear that both of these reals are Cohen over $M$,
	since we met all the relevant dense sets in $M$, and it is also clear that
	$M[d_0]$ and $M[d_1]$ are upper bounds for the tower we started with, since
	$d_0$ and $d_1$ code all the reals $c_n'$ (up to finite modification).
	It thus only remains to show that no models can be fit between the tower and the new
	pair of extensions. By the intermediate model theorem, any putative such model would
	have to itself be a Cohen extension of $M$, so it suffices to show that any real common to both 
	$M[d_0]$ and $M[d_1]$ already appears at some point in the tower. So let $x$ be such a real
	and fix two names $\sigma$ and $\tau$ from $M$ such that $\sigma^{d_0}=\tau^{d_1}=x$.
	This pair of names was considered at some point in the construction, and we attempted
	at that time to ensure that $\sigma^{d_0}$ and $\tau^{d_1}$ would differ at some
	$k$. By our hypothesis there was no $k$ like this, so, as we argued above, the set
	$x=\sigma^{d_0}=\tau^{d_1}$ appeared already in the extension $M[c_N]$.
\end{proof}

\begin{corollary}
	No countable increasing tower of Cohen extensions has a least upper bound in the generic 
	multiverse.
\end{corollary}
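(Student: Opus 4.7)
The plan is to argue by contradiction, leveraging the exact pair theorem just established. Suppose toward contradiction that some strictly increasing countable tower $M[c_0] \subsetneq M[c_1] \subsetneq \dotsb$ of Cohen extensions admits a least upper bound $N$ in the generic multiverse. (If the tower is eventually constant there is nothing to show, so the interesting case is the strictly increasing one.) By \autoref{thm:ExactPair} applied to this tower, we obtain Cohen extensions $M[d_0]$ and $M[d_1]$ that form an exact pair for $\set{M[c_n]}{n<\omega}$.

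Since $M[d_0]$ and $M[d_1]$ are both upper bounds for the tower and $N$ is, by assumption, the least such upper bound, we must have $N \subseteq M[d_0]$ and $N \subseteq M[d_1]$. The defining property of an exact pair then yields some $n < \omega$ and some model in the tower containing $N$; concretely, $N \subseteq M[c_n]$. On the other hand, $N$ is an upper bound for the tower, so $M[c_{n+1}] \subseteq N \subseteq M[c_n]$, which contradicts the strict inclusion $M[c_n] \subsetneq M[c_{n+1}]$.

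The only subtlety worth noting is that the exact pair property as stated concludes that any model below both $M[d_0]$ and $M[d_1]$ is below \emph{some} model in the tower, rather than merely that it equals such a model; but this suffices for the contradiction, since combined with $N$ being an upper bound it forces $N$ to coincide with a single $M[c_n]$, which is incompatible with the tower being strictly increasing past stage $n$. There is no real obstacle here: all the work has already been done in constructing the exact pair, and the present corollary is simply the standard degree-theoretic consequence of exact pairs, transplanted to the Cohen multiverse.
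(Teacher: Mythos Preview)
Your argument is correct and follows the same route as the paper's: obtain an exact pair via \autoref{thm:ExactPair}, note that a putative least upper bound would have to lie below both members of the pair, and derive a contradiction from the exact-pair property together with the tower being unbounded. The paper's proof is simply a terser version of yours, compressing your steps into the single remark that no upper bound for the tower can lie below both members of an exact pair. One minor phrasing caveat: saying ``there is nothing to show'' when the tower is eventually constant is misleading, since such a tower \emph{does} have a least upper bound; rather, the corollary (and the paper's own argument) should be read as concerning strictly increasing towers, which is what you go on to assume anyway.
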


\begin{proof}
	By the preceding theorem any such tower admits an exact pair $M[d_0], M[d_1]$, and by definition
	there is no upper bound for the tower that would lie below both of these two models. Therefore
	there can be no least upper bound for the tower.
\end{proof}

A weaker version of \autoref{thm:ExactPair} was first obtained by Balcar and 
Hájek~\cite{BalcarHajek1978:SequencesOfDegrees}; they proved that some tower of Cohen extensions
admits an exact pair (see also the elaboration by 
Truss~\cite{Truss1978:IncreasingSequencesOfConstructibilityDegrees}).
Our results generalize this further to show that any countable tower of Cohen extensions admits an 
exact pair and, as we are about to see, that any upper bound for a tower may be extended to an exact 
pair.

\begin{theorem}
	Suppose $M[c_0]\subseteq M[c_1]\subseteq\dots$ is a countable tower of Cohen extensions
	of $M$ with upper bound $M[d_0]$. Then there is a Cohen real $d_1$ over $M$ such that
	$M[d_0]$ and $M[d_1]$ form an exact pair for this tower.
\end{theorem}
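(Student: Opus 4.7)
The plan is to run the construction of \autoref{thm:ExactPair} while holding $d_0$ fixed throughout, building only the matrix for $d_1$ and consulting $d_0$ as an external parameter whenever the diagonalization step calls for it.

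First, as at the start of \autoref{thm:ExactPair}, I would replace each $c_n$ with a Cohen real $c_n'$ so that $M[c_n]=M[c_0',\dots,c_n']$ and any finitely many of the $c_n'$ are mutually generic over $M$. I then build $d_1$ as an $\omega\times\omega$ matrix of bits, maintaining the invariant that finitely many of its leftmost columns have been completely filled in, each agreeing with the corresponding $c_n'$ up to a finite modification; this will immediately guarantee $M[c_n]\subseteq M[d_1]$ for every $n$.

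Enumerate in order type $\omega$ all dense subsets $D\in M$ of $\Add(\omega,\omega)$ together with all pairs $(\sigma,\tau)\in M$, where $\sigma$ is an $\Add(\omega,1)$-name and $\tau$ is an $\Add(\omega,\omega)$-name, both for reals. At stage $n$, with current condition $p_n$, first extend $p_n$ into $D$. Next let $N$ be large enough that the first $N$ columns of the resulting condition are completely filled, and form the reduced name $\tau'\in M[c_0',\dots,c_{N-1}']=M[c_{N-1}]$ by substituting those columns into $\tau$. Working in $M[d_0]$ (where $\sigma^{d_0}$ is already determined), search for an extension $q$ of the current condition (acting only on columns $\geq N$) and some $k<\omega$ for which either $q\forces_{\Add(\omega,\omega\setminus N)}\check k\in\tau'$ while $k\notin\sigma^{d_0}$, or $q\forces\check k\notin\tau'$ while $k\in\sigma^{d_0}$; if such a pair exists, pass to $q$, and otherwise do nothing. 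Finally, commit to whichever further columns are scheduled at this stage, matching the corresponding $c_n'$ up to finite modification.

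Let $d_1$ be the Cohen real coded by $\bigcup_n p_n$. Genericity of $d_1$ over $M$ follows from the density steps, and $M[c_n]\subseteq M[d_1]$ since each $c_n'$ eventually appears (up to finite difference) as a column. To verify the exact-pair property it suffices, by the intermediate model theorem, to show that every real $x\in M[d_0]\cap M[d_1]$ lies in some $M[c_n]$. So fix names $\sigma,\tau\in M$ with $\sigma^{d_0}=x=\tau^{d_1}$ and consider the stage at which $(\sigma,\tau)$ was processed. The diagonalization there could not have succeeded, since a successful $q$ would force $\tau^{d_1}\ne\sigma^{d_0}$. Hence no extension of the stage-condition $p$ forces any $\check k\in\tau'$ with $k\notin\sigma^{d_0}$ or $\check k\notin\tau'$ with $k\in\sigma^{d_0}$; by the usual forcing dichotomy, $p$ therefore already forces $\tau'$ to equal the explicit set $y=\{k:p\forces\check k\in\tau'\}$, which lies in $M[c_{N-1}]$. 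Since $\tau'$ evaluates to $\sigma^{d_0}$, we conclude $x=\sigma^{d_0}=y\in M[c_{N-1}]$, as required.

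The main obstacle is conceptual rather than combinatorial: the diagonalization uses $\sigma^{d_0}\in M[d_0]$ as a parameter, but $d_1$ must remain Cohen over $M$, not over $M[d_0]$. This is harmless, because we are constructing $d_1$ externally and only need to meet the dense sets in $M$; the information from $d_0$ is used solely to select between otherwise equally legal extensions. A minor bookkeeping point is to verify that filling in columns to realize each $c_n'$ and running the diagonalization do not interfere, which works because the diagonalization extensions touch only the columns beyond the currently committed initial segment.
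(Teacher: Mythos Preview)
Your proposal is correct and follows essentially the same route as the paper: hold $d_0$ fixed, build only the matrix for $d_1$ column-by-column to almost match the $c_n'$, meet dense sets for genericity, and at each stage use the fully determined real $\sigma^{d_0}$ to pick an extension forcing $\tau'$ to disagree with it whenever possible. Your analysis of the failure case---that the current condition then already decides $\tau'$ and hence places $x$ in $M[c_{N-1}]$---is exactly the paper's conclusion, and your closing remark that consulting $d_0$ does not threaten $M$-genericity of $d_1$ is the only subtlety the paper leaves implicit.
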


\begin{proof}
	We proceed as in the proof of \autoref{thm:ExactPair}, replacing the reals $c_n$ with the 
	finitely mutually generic $c_n'$, and building the real $d_1$ as an $\omega$-by-$\omega$
	matrix whose $n$-th column is almost equal to $c_n'$. Arguing as before, at each step
	we first meet a dense subset of $\Add(\omega,\omega)$ from $M$ and then consider a pair
	of names $\sigma$ and $\tau$ for reals. The only case in which we should act is if
	$d_1$ (or the part of it that we have constructed thus far) does not decide the value
	of $\tau$. In that case there is some $k\in\omega$ such that our approximation of $d_1$
	does not decide $k\in \tau$, and we extend it to make $\tau$ and $\sigma^{d_0}$ differ at 
	$k$ (note that $d_0$ is fully generic, so $\sigma^{d_0}$ is a fully fledged 
	real). Afterwards we fill in the nonempty columns of $d_1$ using the given Cohen reals $c_n'$.
	
	The proof is finished exactly the same way as before. The constructed real is clearly Cohen over
	$M$, and any real that can be written as $\sigma^{d_0}=\tau^{d_1}$ must have already
	been fully decided at some initial stage of the construction and must thus appear in some
	extension $M[c_N]$.
\end{proof}

\begin{corollary}
	For any Cohen extension $M[c]$ there is another Cohen extension $M[d]$ such that these two
	models do not have a greatest lower bound in the generic multiverse.
\end{corollary}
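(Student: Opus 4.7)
The plan is to apply the preceding theorem to a suitably chosen tower lying inside $M[c]$. First, since $\Add(\omega,1)$ is forcing-equivalent to $\Add(\omega,\omega)$, the Cohen real $c$ can be recast inside $M[c]$ as a sequence $\langle c_n' \mid n<\omega\rangle$ of mutually generic Cohen reals over $M$ with $M[c]=M[c_n' \mid n<\omega]$. Setting $M[c_n]:=M[c_0',\dots,c_n']$ yields a strictly increasing countable tower
\[
M[c_0]\subsetneq M[c_1]\subsetneq\cdots
\]
of Cohen extensions of $M$, all contained in $M[c]$. In particular, $M[c]$ is an upper bound for this tower, so the hypotheses of the preceding theorem are met with $M[d_0]:=M[c]$.

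Applying that theorem, I obtain a Cohen real $d$ over $M$ such that $M[c]$ and $M[d]$ form an exact pair for the tower. I claim this pair has no greatest lower bound in the generic multiverse. Suppose, for a contradiction, that some model $N$ in the generic multiverse were such a greatest lower bound. Since $N\subseteq M[c]$ and $N\subseteq M[d]$, the definition of exact pair forces $N\subseteq M[c_k]$ for some $k<\omega$. But $M[c_{k+1}]$ is itself a lower bound of $M[c]$ and $M[d]$, since each is an upper bound of the entire tower; hence by maximality $M[c_{k+1}]\subseteq N\subseteq M[c_k]$, contradicting the strict inclusion $M[c_k]\subsetneq M[c_{k+1}]$.

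The only step with any genuine content is the initial decomposition of $c$ into countably many mutually generic Cohen reals, which is the standard equivalence $\Add(\omega,1)\cong\Add(\omega,\omega)$ as complete Boolean algebras. Everything else is a purely formal consequence of the definition of an exact pair, together with the strictness of the tower ensured in the first step. No issue arises about the distinctness of $M[c]$ and $M[d]$, as this is built into the definition of an exact pair and hence guaranteed by the previous theorem.
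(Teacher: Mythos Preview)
Your proof is correct and follows essentially the same approach as the paper: decompose $c$ via the equivalence $\Add(\omega,1)\cong\Add(\omega,\omega)$ into an $\omega$-sequence of mutually generic Cohen reals, form the increasing tower of partial-product extensions with $M[c]$ as upper bound, apply the preceding theorem to obtain $d$ making $M[c],M[d]$ an exact pair, and conclude that no greatest lower bound can exist. Your contradiction argument spelling out why an exact pair over a strictly increasing tower rules out a greatest lower bound is just a more explicit rendering of the paper's one-line remark that ``each model in the tower is a lower bound for them, but, by definition, no lower bound exists above the tower.''
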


\begin{proof}
	Let $c$ be a Cohen real over $M$, seen as an $\omega$-by-$\omega$ matrix, and let
	$c_n$ be the real consisting of the first $(n+1)$ many columns of $c$. Then
	all of the reals $c_n$ are Cohen over $M$ and they form a tower
	$M\subseteq M[c_0]\subseteq M[c_1]\subseteq\dots$ with upper bound $M[c]$. 
	By the preceding theorem
	there is a Cohen real $d$ such that $M[c]$ and $M[d]$ form an exact pair for this tower.
	But these two models cannot have a greatest lower bound, since each model in the tower
	is a lower bound for them, but, by definition, no lower bound exists above the tower.
\end{proof}

Note that if $d_0$ and $d_1$ are an exact pair for the tower
$M[c_0]\subseteq M[c_1]\subseteq M[c_2]\subseteq\dots$,
it will not in general be the case that $M[d_0]\cap M[d_1]=\bigcup_n M[c_n]$,
even though this is true at the level of reals.
For example $\set{x}{\exists n\colon x=^* c_n}$, the set of all finite modifications
of the $c_n$, is in both $M[d_0]$ and $M[d_1]$ as constructed in the proof of
\autoref{thm:ExactPair}, but is clearly not in any $M[c_n]$.

Finally, we wish to examine the relationship between the existence of suprema and infima
of models in the multiverse and mutual genericity. Two mutually generic extensions clearly
have a supremum. On the other hand, Solovay showed that the intersection of two mutually
generic extensions is exactly the ground model, and therefore the ground model is also
the infimum of these two extensions in the multiverse. As we will see, neither of these two
implications can be reversed and, in fact, the existence of a supremum does not imply the
existence of an infimum or vice versa.

\begin{theorem}
	There is a pair of models in the Cohen multiverse that have an infimum but do not have a supremum,
	and another pair of models that have a supremum but do not have an infimum. In particular, neither
	the existence of a supremum nor the existence of an infimum imply that the two models are mutually generic (over 
	the infimum, in the second case).
\end{theorem}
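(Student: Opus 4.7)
The plan is to handle the two pairs separately, using the blockchain from Theorem~\ref{thm:GeneralBlockchain} for the infimum-only pair and an internalization of the exact-pair construction of Theorem~\ref{thm:ExactPair} inside a collapse extension for the supremum-only pair.

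For the pair with an infimum but no supremum, I would apply Theorem~\ref{thm:GeneralBlockchain} with index set $I=\{0,1\}$ and the family $\mathcal{A}=\{\varnothing,\{0\},\{1\}\}$, defined by the single obstacle $\{0,1\}$. This produces Cohen reals $c_0,c_1$ over $M$ that are nonamalgamable---hence admit no supremum by Proposition~\ref{prop:AmalgamableImpliesSup}---and satisfy $M[c_0]\cap M[c_1]=M[c_i\mid i\in\varnothing]=M$ by the intersection property of the blockchain. Any model $N$ of the generic multiverse below both $M[c_0]$ and $M[c_1]$ must then be contained in the intersection $M$, so $M$ itself is the greatest common lower bound and serves as the infimum.

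For the pair with a supremum but no infimum, the plan is to carry out the exact-pair construction inside a forcing extension of $M$ in which the relevant $M$-side collections become countable. Pick a cardinal $\kappa\in M$ large enough that both the family of dense subsets of $\Add(\omega,\omega)$ in $M$ and the family of $\Add(\omega,\omega)$-name pairs in $M$ have $M$-cardinality at most $\kappa$, and let $h$ be $M$-generic for $\Coll(\omega,\kappa)^M$, which exists in $V$ since $M$ is countable. Inside $M[h]$ these two $M$-collections are now countable and so can be enumerated in order type $\omega$. Working within $M[h]$, pick a sequence $\langle\tilde c_n:n<\omega\rangle$ of mutually generic Cohen reals over $M$ (now available by diagonalization against the enumerated $M$-dense sets) and internalize the construction of Theorem~\ref{thm:ExactPair} to produce $d_0,d_1\in M[h]$ that are Cohen over $M$ and form an exact pair for the tower $M[\tilde c_0]\subseteq M[\tilde c_0,\tilde c_1]\subseteq\cdots$.

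The main obstacle is precisely that the externally-built $d_0,d_1$ of Theorem~\ref{thm:ExactPair} need not lie in a common forcing extension of $M$; forcing the diagonal construction to take place inside $M[h]$ places both reals in $M[h]$, so $M[d_0],M[d_1]\subseteq M[h]$, the pair amalgamates in the generic multiverse, and by Proposition~\ref{prop:AmalgamableImpliesSup} the pair has a supremum. The exact-pair property produces a strictly increasing sequence of lower bounds $M[\tilde c_0,\ldots,\tilde c_n]\subseteq M[d_0]\cap M[d_1]$, so any putative infimum would sit below some $M[\tilde c_0,\ldots,\tilde c_{n_0}]$ while still containing $M[\tilde c_0,\ldots,\tilde c_{n_0+1}]$, a contradiction. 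The ``in particular'' clause then drops out: in the first pair, mutual genericity of $c_0,c_1$ over the infimum $M$ would make $\langle c_0,c_1\rangle$ generic for $\Add(\omega,2)^M$, contradicting nonamalgamability; in the second pair, mutual genericity of $d_0,d_1$ over $M$ would force $M[d_0]\cap M[d_1]=M$ by Solovay's lemma, but this intersection already contains $M[\tilde c_0]\supsetneq M$.
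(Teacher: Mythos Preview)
Your argument is correct. For the first pair it matches the paper exactly: apply \autoref{thm:GeneralBlockchain} with $\mathcal{A}=\{\varnothing,\{0\},\{1\}\}$ to get nonamalgamable $c_0,c_1$ with $M[c_0]\cap M[c_1]=M$. For the second pair you take a genuinely different route. The paper simply cites a result of Truss~\cite{Truss1978:IncreasingSequencesOfConstructibilityDegrees}: inside any single Cohen extension $M[c]$ one can build an exact pair $M[d],M[e]$ for a tower, using a generic sequence of finite modifications of the columns of $c$; since $d,e\in M[c]$ the pair amalgamates (even in the Cohen multiverse) and \autoref{prop:AmalgamableImpliesSup} gives the supremum. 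Your approach is instead to internalize the diagonal construction of \autoref{thm:ExactPair} inside a collapse extension $M[h]$ in which the relevant $M$-side data (dense sets and name-pairs for reals) become countable, so that $d_0,d_1\in M[h]$ and amalgamability is immediate. This is self-contained and avoids the external citation, at the cost of using a much larger forcing for the amalgamation; in particular, your supremum is only guaranteed to lie in the full generic multiverse, whereas Truss's construction places it inside a Cohen extension. For the theorem as stated (which only invokes \autoref{prop:AmalgamableImpliesSup}) this makes no difference, but it is worth noting that the paper's route gives the stronger conclusion for free.
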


\begin{proof}
	For the first part, consider two extensions $M[c_0], M[c_1]$ given by 
	\autoref{thm:GeneralBlockchain} for the family $\mathcal{A}=\{\{0\},\{1\}\}$. In particular,
	$M[c_0]$ and $M[c_1]$ do not amalgamate, so they do not have a supremum, 
	and their intersection is $M$, which implies that $M$ is also their greatest lower bound in 
	the multiverse.
	
	For the second part we use a result due to 	
	Truss~\cite{Truss1978:IncreasingSequencesOfConstructibilityDegrees} which states that
	in any Cohen extension $M[c]$ there are two Cohen reals $d,e$ over $M$ such that
	$M[d]$ and $M[e]$ do not have an infimum. His construction builds an exact pair
	over a tower much like we did in \autoref{thm:ExactPair}, but instead of building the pair
	by induction and ensuring that the two generics almost agree on each column, Truss
	uses a generic sequence of finite modifications on the columns of $c$. 
	In the end we are left with
	two models $M[d]$ and $M[e]$ with no infimum, but they are amalgamable and therefore
	have a supremum, by \autoref{prop:AmalgamableImpliesSup}.
\end{proof}

We can strengthen the preceding result a bit to show that not even the existence of both a supremum and
an infimum suffices for mutual genericity.

\begin{theorem}
	There are Cohen extensions $M[c]$ and $M[d]$ which have both a supremum and an
	infimum in the full generic multiverse, but which are not mutually generic over their infimum.
\end{theorem}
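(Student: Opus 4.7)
The plan is to apply \autoref{thm:MutableBlockchain} with $n = 2$ and $\mathcal{A} = \mathcal{P}(\{0,1\})$. This yields a pair of Cohen reals $c, d$ over $M$ which are amalgamable, so that the supremum $M[c,d]$ exists in the generic multiverse by \autoref{prop:AmalgamableImpliesSup}; by the remark immediately after \autoref{thm:MutableBlockchain}, the pair $c, d$ is \emph{not} mutually generic over $M$.

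What remains is to identify the infimum. I would aim to show $M[c] \cap M[d] = M$, which forces the infimum in the full generic multiverse to be $M$ itself: any common ground of $M[c]$ and $M[d]$ in the multiverse sits inside the intersection, and $M$ is itself such a common ground. Recall that $c = x_0'$ and $d = x_1'$ arise as primed reals with respect to an auxiliary Cohen real $y$, where $x_0, x_1, y$ are mutually generic Cohen reals over $M$. Since $x_0, x_1$ are mutually generic Cohen reals over $M[y]$, Solovay's intersection theorem applied inside $M[y]$ gives $M[y,c] \cap M[y,d] = M[y]$, and therefore $M[c] \cap M[d] \subseteq M[y]$. The remaining inclusion $M[c] \cap M[y] = M$ is the heart of the argument: any real computable from both $c$ and $y$ in $M$ would correspond to an $M$-Borel function $\phi$ of $c$ which is constant on every fiber of the priming map $(x_0, y) \mapsto x_0'$; I would then argue that no nontrivial such $\phi$ can exist in $M$, since the $y$-marker bits in $c$ are camouflaged by the $x_0$-bits on the active positions, and as $y$ varies over Cohen reals over $M$ every coordinate of $c$ eventually falls in an active region, forcing such a $\phi$ (and the corresponding $\psi(y)$) to be essentially constant.

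Combining these observations, the infimum in the full generic multiverse is $M$, and $c, d$ are not mutually generic over $M$ by the remark following \autoref{thm:MutableBlockchain}, as required. The main technical obstacle I anticipate is rigorously closing the gap $M[c] \cap M[y] = M$; in Boolean-algebra terms this amounts to showing that the complete subalgebras of $\Add(\omega,2)$ generated by $c$ and by $y$ have trivial intersection, and the intuitive sketch above would need to be tightened via a careful automorphism or measure-theoretic argument about the priming construction.
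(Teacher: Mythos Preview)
Your plan has a genuine gap precisely where you flag it: the claim $M[c]\cap M[d]=M$ (or, after your reduction, $M[c]\cap M[y]=M$) is not established, and the sketch you give is too vague to be convincing. In fact, the paper itself remarks, immediately after the mutable blockchain \autoref{thm:MutableBlockchain}, that it is \emph{unclear} whether that construction can be modified to control the intersections of the resulting extensions. So you are attempting to extract from the mutable blockchain exactly the feature the authors could not obtain. Your reduction $M[c]\cap M[d]\subseteq M[y]$ via Solovay over $M[y]$ is correct, but the step $M[c]\cap M[y]=M$ does not follow from any lemma in the paper: Lemma~5.5 only tells you that $c$ alone is Cohen over $M$, not that $c$ and $y$ are mutually generic (and they visibly are not, since $c$ is identically $1$ on $G(y)$ and $0$ on $C(y)$), nor does it give any intersection information. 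The hand-waving about ``camouflage'' and ``every coordinate eventually falls in an active region'' mixes quantifiers---the constancy of $\phi$ is only asserted on each fixed-$y$ fiber separately---and it is not clear how to glue these fiberwise constancies into a global triviality statement.

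The paper takes a completely different route that sidesteps this obstacle. Rather than the mutable blockchain, it runs the \emph{original} blockchain construction of \autoref{thm:GeneralBlockchain} inside a collapse extension $M[G]$ (collapsing $(2^\omega)^+$), using $G$ itself in place of the catastrophic real. That construction has a dedicated step (the third kind of requirement in the proof of \autoref{thm:GeneralBlockchain}) which directly forces any common real of $M[c]$ and $M[d]$ into $M$; carried out in $M[G]$, where there are only countably many names for reals, this yields $M[c]\cap M[d]\supseteq M$ only at the level of reals, which suffices for the infimum to be $M$. The supremum is then $M[G]$ itself, and non-mutual-genericity follows because a product of Cohen forcings could never have a collapse extension as its supremum. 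So where you try to verify the intersection property \emph{a posteriori} for an off-the-shelf construction, the paper \emph{builds it in} by choosing a construction with intersection control from the start.
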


\begin{proof}
	Let $M[G]$ be an extension of $M$ by the forcing to collapse $(2^\omega)^+$ to be countable.
	Inspecting the proof of \autoref{thm:GeneralBlockchain}, it is clear that that whole
	construction may be carried out in $M[G]$ (with $G$ taking the role of the catastrophic real) 
	to produce Cohen reals $c,d\in M[G]$ over $M$
	such that any extension of $M$ containing both $c$ and $d$ also contains $G$ and that
	any real contained in $M[c]\cap M[d]$ is already contained in $M$.\footnote{Some care is 
	needed to ensure this last requirement. Instead of going through all pairs of Cohen names
	for subsets of $M$, as in the proof of \autoref{thm:GeneralBlockchain}, we just go
	through pairs of names for reals, and there are countably many of those in $M[G]$.}

	It follows that $M$ is the infimum of $M[c]$ and $M[d]$. This is because any other
	lower bound above $M$ would have to be a forcing extension of $M$ by a subforcing of
	Cohen forcing, by the intermediate model theorem, and all such extensions are generated
	by a real, but we assumed that $M[c]$ and $M[d]$ only have the reals of $M$ in common.
	On the other hand, $M[c]$ and $M[d]$ clearly have $M[G]$ as their supremum.
	
	However, $M[c]$ and $M[d]$ cannot be mutually generic. If they were, then their supremum
	would also be a Cohen extension of $M$, which it clearly is not.
\end{proof}

The Cohen extensions in the preceding theorem were amalgamable in the full multiverse but not in
the Cohen multiverse. It is less clear whether the conclusion of the theorem still holds if we
require the supremum and infimum to exist in the Cohen multiverse, but we expect that it does.

\begin{question}
	Let $M[c]$ and $M[d]$ be Cohen extensions of $M$ with infimum $M$ and a supremum in
	the Cohen multiverse. Are $M[c]$ and $M[d]$ mutually generic extensions of $M$?
\end{question}

Our construction of an exact pair of Cohen reals can be carried out in a sufficiently large
collapse extension of $M$, and Truss showed that just a Cohen extension suffices. Both of these
imply that exact pairs can be amalgamable. On the other hand, it is not clear whether the
construction can be combined in some way with the blockchain construction to ensure nonamalgamability.

\begin{question}
	Can an exact pair be nonamalgamable?
\end{question}

Finally, acknowledging the utility of exact pairs of Cohen reals, we must ask whether their
existence is a peculiar fact about Cohen forcing, or whether they can be constructed in other
multiverses as well.

\begin{question}
	Do forcing notions beyond Cohen forcing also exhibit exact pairs?
\end{question}

%\section{Acknowledgements}

\bibliographystyle{amsalphafixed}
\bibliography{Bibliography}
\end{document}